\newtheorem{theorem}{Theorem}
\newtheorem{prop-f}[theoreme]{Proposition}
\newtheorem{prop}[theorem]{Proposition}
\newtheorem{corollary}[theorem]{Corollary}
\newtheorem{lemma}[theorem]{Lemma}
\newtheorem{claim}[theorem]{Claim}
\newcommand{\E}{\mathbb{E}}
\newcommand{\N}{\mathbb{N}}
\renewcommand{\P}{\mathbb{P}}
\newcommand{\Q}{\mathbb{Q}}
\newcommand{\R}{\mathbb{R}}
\newcommand{\Z}{\mathbb{Z}}
\newcommand{\cA}{\mathcal{A}}
\newcommand{\cB}{\mathcal{B}}
\newcommand{\cC}{\mathcal{C}}
\newcommand{\cF}{\mathcal{F}}
\newcommand{\cP}{\mathcal{P}}
\newcommand{\cQ}{\mathcal{Q}}
\newcommand{\cR}{\mathcal{R}}
\newcommand{\cS}{\mathcal{S}}
\renewcommand{\1}{\mathds{1}}
\renewcommand{\epsilon}{\varepsilon}
\renewcommand{\phi}{\varphi}
\newcounter{numeroexo}
\newcommand{\card}{\mbox{card}}
\renewcommand{\d}{\text{d}}
\title{\LARGE Continuity of the time constant in a continuous model\\ of first passage percolation\footnote{
Research was partially supported by the ANR project PPPP (ANR-16-CE40-0016) and the Labex MME-DII (ANR 11-LBX-0023-01).}
}
\author{Jean-Baptiste Gou\'er\'e\footnote{Institut Denis-Poisson - UMR CNRS 7013, Universit\'e de Tours, Parc de Grandmont, 37200 Tours, France, {\it jean-baptiste.gouere@lmpt.univ-tours.fr}}  and Marie Th\'eret\footnote{Modal'X, UPL, Universit\'e Paris Nanterre, 92000 Nanterre, France, and FP2M, CNRS FR 2036, {\it marie.theret@parisnanterre.fr}}}
\date{}
\begin{document}

\selectlanguage{english}

\maketitle

\thispagestyle{empty}

\noindent
{\bf Abstract:} {For a given dimension $d\geq 2$ and a finite measure $\nu$ on $(0,+\infty)$, we consider $\xi$ a Poisson point process on $\R^d\times (0,+\infty)$ with intensity measure $dc \otimes \nu$ where $dc$ denotes the Lebesgue measure on $\R^d$. We consider the Boolean model $\Sigma = \cup_{(c,r) \in \xi} B(c,r)$ where $B(c,r)$ denotes the open ball centered at $c$ with radius $r$. For every $x,y \in \R^d$ we define $T(x,y)$ as the minimum time needed to travel from $x$ to $y$ by a traveler that walks at speed $1$ outside $\Sigma$ and at infinite speed inside $\Sigma$. By a standard application of Kingman sub-additive theorem, one easily shows that $T(0,x)$ behaves like $\mu \|x\|$ when $\|x\|$ goes to infinity, where $\mu$ is a constant named the time constant in classical first passage percolation. In this paper we investigate the regularity of $\mu$ as a function of the measure $\nu$ associated with the underlying Boolean model.}
\\

\noindent
{\it Keywords:} Boolean model ; continuum percolation ; first passage percolation ; time constant ; continuity.\\

\section{Introduction and main results}

\subsection{The Boolean model}
\label{s:boolean-model}

Fix $d \ge 2$.
Let $\nu$ be a finite measure on $(0,+\infty)$ which is not the null measure.
Let $\xi$ be a Poisson point process on $\R^d \times (0,+\infty)$ with intensity measure $dc \otimes \nu$ 
where $dc$ denotes the Lebesgue measure on $\R^d$.
The Boolean model is the set
\[
\Sigma=\Sigma(d,\nu)=\bigcup_{(c,r) \in \xi} B(c,r)
\]
where $B(c,r)$ denotes the open Euclidean ball centered at $c$ with radius $r$.
We say that $\xi$ and $\Sigma$ are {\em driven} by the measure $\nu$.

Denote by $\lambda$ the total mass of $\nu$, that is $\lambda = \nu[(0,+\infty)]$.
Let $\chi$ the projection of $\xi$ on $\R^d$.
This is a Poisson point process on $\R^d$ with intensity measure $\lambda|\cdot|$.
Almost surely, for all $c \in \chi$ there is a unique $r(c) \in (0,+\infty)$ such that $(c,r(c))$ belongs to $\xi$.
In other words, we can write
\[
\xi = \{(c,r(c)), c \in \chi\}
\]
and thus
\[
\Sigma = \bigcup_{c \in \chi} B(c,r(c)) .
\]
To simplify some notations, we adopt the following convention: $r(c)=0$ if $c$ belongs to $\R^d\setminus\chi$.
Condition to $\chi$, $(r(c))_{c \in \chi}$ is a family of independent random variables with distribution $\lambda^{-1} \nu$.

We refer to the books by Meester and Roy \cite{Meester-Roy-livre} and by Last and Penrose \cite{Last-Penrose-livre} for background on 
Poisson processes and the Boolean model.

When
\begin{equation}\label{e:momentd}
\int_{(0,+\infty)} r^d \nu(\d r) < \infty
\end{equation}
does not hold, $\Sigma=\R^d$ with probability one and the models we are interested in are trivial.
Therefore, in the whole of this work, we assume that all our point processes are driven by measures satisfying \eqref{e:momentd}.

\subsection{Paths}

A path $\pi$ is a finite sequence of distinct points of $\R^d$.
The length of a path $\pi=(x_0,\dots,x_n)$ is the sum of the Euclidean lengths of its segments:
\[
\ell(\pi) = \sum_{i=1}^{n} \|x_i-x_{i-1}\|
\]
where $\|\cdot\|$ denotes the Euclidean norm on $\R^d$.
When $(x_0,\dots,x_n)$ is a sequence of points of $\R^d$ (non necessarily distinct) we define its length by the same formula.
The path is said to be inside $A \subset \R^d$ if all its segments $[x_i,x_{i+1}]$ are included in $A$.
We say that $\pi$ is a path from $a \in \R^d$ to $b \in \R^d$ if its first point is $a$ and its last point is $b$.
We say that $\pi$ is a path from $A \subset \R^d$ to $B \subset \R^d$ if its first point belongs to $A$ and its last points belongs to $B$.
We will occasionally see $\pi$ as a polygonal curve $[0,\ell(\pi)] \to \R^d$ parametrized by arc length.
	
\subsection{First passage percolation in the Boolean model}

\subsubsection{Model} 

A traveler walks on $\R^d$.
He travels at speed infinite inside the Boolean model $\Sigma$ and at speed $1$ outside.
We denote by $T(a,b)$ the time needed to go from $a$ to $b$ along the quickest path.

Here is a formal definition.
For any $x,y\in \R^d$ we define $\tau(x,y)$ as the one-dimensional measure of $[x,y] \cap \Sigma^c$, {\em i.e.}, the sum of the lengths of the segments that constitute $[x,y] \cap \Sigma^c$.
With each path $\pi=(x_0,\dots,x_n)$ we associate a time $\tau(\pi)$ defined by
\[
\tau(\pi)=\sum_{i=1}^n \tau(x_{i-1},x_i).
\]
For any $a,b \in \R^d$ we then set 
\[
T(a,b) = \inf_{\pi} \tau(\pi)
\]
where the infimum is taken along the set of paths from $a$ to $b$\footnote{
See Section \ref{s:mesurabilite} in Appendix for remarks on the measurability of $T$.}.
We define similarly $T(A,B)$ for two subsets
 $A,B \subset \R^d$ as 
\[
T(A,B) =\inf_{a\in A, b\in B} T(a,b).
\]

This model was implicitly introduced in \cite{GM-deijfen} by Régine Marchand and the first author.
It was then explicitly introduced and studied in \cite{Gouere-Theret-17} by the two authors.

\subsubsection{Time constant} 
A standard application of Kingman subadditive ergodic theorem yields the following result.

\begin{theorem}[\cite{Gouere-Theret-17}] \label{t:timeconstant} There exists a constant $\mu(\nu)=\mu(d,\nu) \in [0,1]$ such that:
\[
\lim_{\|x\|\to\infty} \frac{T(0,x)}{\|x\|} = \mu(\nu) \text{ almost surely and in } L^1.
\]
\end{theorem}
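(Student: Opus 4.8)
The plan is to apply Kingman's subadditive ergodic theorem in the direction of a fixed vector, and then upgrade the resulting directional limit to a norm limit using isotropy of the model.

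First I would fix a unit vector, say $e_1$, and study the sequence $(T(0, n e_1))_{n \ge 0}$. The key point is subadditivity: for $0 \le m \le n$ one has $T(0, n e_1) \le T(0, m e_1) + T(m e_1, n e_1)$, which is immediate from the definition of $T$ as an infimum over concatenated paths (a path from $0$ to $me_1$ followed by a path from $me_1$ to $ne_1$ is a path from $0$ to $ne_1$, and $\tau$ is additive along concatenation). More generally one introduces the two-parameter family $X_{m,n} = T(me_1, ne_1)$ and checks the hypotheses of Kingman's theorem: (i) $X_{0,n} \le X_{0,m} + X_{m,n}$; (ii) the family $(X_{m,m+k})_{k\ge 1}$ is stationary in $m$, because the Poisson point process $\xi$ is invariant under the translation $c \mapsto c - e_1$ (the intensity $dc \otimes \nu$ is translation invariant in the first coordinate), and this translation is ergodic — indeed it is mixing, as can be seen from the independence properties of Poisson processes on disjoint regions; (iii) the integrability bound $\E[X_{0,1}^+] < \infty$, which here is trivial since $0 \le T(me_1,ne_1) \le \|ne_1 - me_1\| = n-m$ because the traveler can always just walk in a straight line at speed $1$, ignoring $\Sigma$. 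Kingman's theorem then yields a constant $\mu = \mu(d,\nu) \in [0,1]$ (the bound $\mu \le 1$ coming from $T(0,ne_1) \le n$, and $\mu \ge 0$ trivially) such that $T(0, ne_1)/n \to \mu$ almost surely and in $L^1$ as $n \to \infty$ along integers.

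Next I would pass from the integer subsequence $ne_1$ to a continuous limit $\|x\| \to \infty$ along $x = t e_1$, $t \in \R_+$, and then to arbitrary directions. For the first, interpolate: if $n = \lfloor t \rfloor$ then $|T(0,te_1) - T(0,ne_1)| \le \|te_1 - ne_1\| \le 1$ by the triangle inequality for $T$ and the straight-line bound, so dividing by $t$ and letting $t\to\infty$ gives $T(0,te_1)/t \to \mu$. For general directions, one uses the rotational invariance of the model: the law of $\xi$ (hence of $\Sigma$ and of the field $(T(x,y))_{x,y}$) is invariant under any rotation of $\R^d$ fixing the origin, since Lebesgue measure is rotation invariant. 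Therefore for any fixed unit vector $u$, the process $(T(0, tu))_{t\ge 0}$ has the same law as $(T(0,te_1))_{t \ge 0}$, so $T(0,tu)/t \to \mu$ in probability, and one promotes this to an almost sure statement valid simultaneously for all directions by a standard argument: control the oscillation of $x \mapsto T(0,x)$ on the sphere of radius $\|x\|$ using the Lipschitz bound $|T(0,x) - T(0,y)| \le \|x - y\|$, cover the unit sphere by finitely many caps, and apply the directional convergence at the (finitely many) cap centers together with a Borel–Cantelli argument along a geometric subsequence $\|x\| \in [2^k, 2^{k+1}]$.

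The main obstacle I anticipate is not any single hard estimate but rather the careful verification that $X_{m,n} = T(me_1,ne_1)$ genuinely fits the subadditive framework — in particular confirming measurability of $T$ (the paper defers this to an appendix, so I would cite that) and confirming the ergodicity (indeed mixing) of the shift acting on the Poisson process, which is where the probabilistic input really lies. The uniformity over directions in the final step is routine but must be done with the Lipschitz bound in hand; everything else ($\mu \in [0,1]$, the straight-line upper bound, $L^1$ convergence from Kingman) is essentially automatic. Since the statement is quoted from \cite{Gouere-Theret-17}, a complete write-up would simply assemble these standard ingredients and refer to that paper for the details.
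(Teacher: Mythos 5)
Your proposal is correct and follows exactly the route the paper has in mind: Theorem \ref{t:timeconstant} is quoted from \cite{Gouere-Theret-17} as ``a standard application of Kingman subadditive ergodic theorem,'' and your verification of subadditivity, stationarity/ergodicity of the $e_1$-shift, the trivial bound $T(0,x)\le\|x\|$, and the upgrade to all directions via rotational invariance and the $1$-Lipschitz property of $x\mapsto T(0,x)$ is precisely that standard argument (with $L^1$ convergence following from boundedness by $1$ and dominated convergence). No genuine gap; the Borel--Cantelli step is not even needed once the deterministic Lipschitz bound and finitely many cap centers are in hand.
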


The constant $\mu(\nu)$ is called the time constant of the model. We enlighten the fact that the convergence stated in Theorem \ref{t:timeconstant} is uniform with respect to the directions, in particular Theorem \ref{t:timeconstant} implies for instance that $ \lim_{r\rightarrow \infty} T(0,B(0,r)^c) / r = \mu (\nu)$ a.s. Moreover, subadditivity ensures that $\mu (\nu)$ can be defined as an infimum, for instance
\begin{equation}
\label{e:nuinfimum}
\mu (\nu ) = \inf_k \frac{\mathbb{E} [T (0, ke_1)]}{k}
\end{equation}
where $e_1 = (1,0, \dots, 0)$.

\subsubsection{Positivity of the time constant} 
Consider now the condition
\begin{equation}\label{e:greedy}
 \int_{(0,+\infty)} \nu([r,+\infty))^{1/d} dr < \infty.
\end{equation}
We call it the {\em greedy condition}.
It appears in the paper by Martin \cite{Martin-greedy} about greedy lattice paths and animals. 
We refer to \cite{Martin-greedy} for a discussion about \eqref{e:greedy}.
For example, for any $\epsilon>0$,
\begin{equation}\label{e:greedy2}
\int_{(0,+\infty)} r^d \ln_+(r)^{d-1+\epsilon} \nu(dr) < \infty 
\Rightarrow \int_{(0,+\infty)} \nu([r,+\infty))^{1/d} dr < \infty
\Rightarrow \int_{(0,+\infty)} r^d \nu(dr) < \infty.
\end{equation}
In particular, notice that the greedy condition \eqref{e:greedy} implies condition \eqref{e:momentd}.

We say that $\nu$ is strongly subcritical for percolation if 
\begin{equation}
\label{e:nupositif}
\limsup_{r \to \infty} \P[\text{there exists a path inside }\Sigma\text{ from }B(0,r)\text{ to }B(0,2r)^c] = 0.
\end{equation}
The following theorem is the main result of \cite{Gouere-Theret-17}.

\begin{theorem}[\cite{Gouere-Theret-17}] \label{t:gt17}
If \eqref{e:greedy} holds, then $\mu(\nu)$ is positive if and only if $\nu$ is strongly subcritical for percolation.
\end{theorem}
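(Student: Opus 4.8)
I would prove the two implications separately, reducing both to the growth of $t_n := T(0,B(0,n)^c)$. By Theorem~\ref{t:timeconstant} and the remark after it, $t_n/n \to \mu(\nu)$ almost surely (and, being bounded by $1$, also in $L^1$), and by \eqref{e:nuinfimum} one has $\mu(\nu) = \inf_k k^{-1}\E[T(0,ke_1)]$; so it suffices to decide whether $t_n$ is of order $n$ or $o(n)$. Throughout, write $p_r$ for the crossing probability appearing in \eqref{e:nupositif}.

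\emph{If $\nu$ is not strongly subcritical, then $\mu(\nu)=0$.} There are two cases. If $\Sigma$ possesses an unbounded component $\cC$, then from the origin one reaches $\cC$ in an almost surely finite time $T(0,\cC)$ and thereafter travels at infinite speed, so $t_n \le T(0,\cC)$ stays bounded and $\mu(\nu)=0$. Otherwise all components of $\Sigma$ are bounded but $p_r \ge c$ for some $c>0$ along a sequence $r_k \uparrow \infty$. A soft one-scale estimate already gives $\mu(\nu)<1$: on the event behind $p_r$ there is a component meeting both $B(0,r)$ and $B(0,2r)^c$; walking from $0$ to a point of it inside $B(0,r)$ costs less than $r$ and then $B(0,2r)^c$ is reached for free, so $\E[t_{2r}] \le p_r\cdot r + (1-p_r)\cdot 2r$, whence $\mu(\nu)\le 1-c/2$. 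To upgrade this to $\mu(\nu)=0$ I would run a renormalization: using that the crossings persist along an \emph{unbounded} family of scales, a block argument (comparison with a supercritical coarse site percolation, as in standard multiscale schemes) should produce, almost surely, a chain of components of $\Sigma$ joined by short bridges that reaches arbitrarily far out while the total bridge length grows sublinearly in the radius attained; travelling along such a structure gives $t_n=o(n)$. This last step is the main obstacle in this direction: a single scale only yields $\mu(\nu)\le 1-c/2$, and extracting $\mu(\nu)=0$ genuinely requires organizing the non-decaying annulus crossings over the unbounded family of good scales into a percolating, shortcut-producing structure.

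\emph{If $\nu$ is strongly subcritical, then $\mu(\nu)>0$.} This is where the greedy condition \eqref{e:greedy} is used. Fix $n$ and a near-optimal path $\pi$ from $0$ to $B(0,n)^c$ with $\tau(\pi)\le\delta n$; since moving inside $\Sigma$ is free, we may assume $\pi$ has no self-intersection and that, inside each component of $\Sigma$ it visits, it follows a geodesic of that component, so that $\ell(\pi)-\tau(\pi)$ equals the one-dimensional measure of $[\pi]\cap\Sigma$. As $\ell(\pi)\ge n$, the net displacements of $\pi$ across the visited components sum to at least $(1-\delta)n$; bounding the displacement across a component by twice the sum of the radii of its balls, the balls met by $\pi$ have radii summing to at least $(1-2\delta)n/2$, and their centres lie in a connected lattice animal $A\ni 0$. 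Now \eqref{e:greedy} is precisely the integrability hypothesis of Martin's greedy lattice animal theorem \cite{Martin-greedy}; applied to $\omega_z :=$ (sum of the radii of the balls of $\Sigma$ centred in the unit cube $z$) and to the field counting the points of $\xi$ in each cube, it provides a constant $\gamma$ with $\sum_{z\in A}\omega_z\le\gamma|A|$ for all large connected animals $A\ni 0$, while the geometry of $\pi$ (together with the same estimate applied to the counting field) bounds $|A|$ by a constant times $\delta n + \sum_{z\in A}\omega_z$. Combining these forces $(1-2\delta)/2$ to be at most a quantity of order $\delta$, impossible once $\delta$ is small; hence $t_n\ge\delta n$ for all large $n$, so $\mu(\nu)\ge\delta>0$. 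The delicate point here — and the reason one needs the full \eqref{e:greedy} and not merely \eqref{e:momentd}, as well as the reason strong subcriticality \eqref{e:nupositif} rather than mere subcriticality is required — is to convert \eqref{e:nupositif} (a statement about crossings near the origin) into a local event with exponentially good probability on coarse blocks and to reconcile it with the greedy estimate, so as to rule out a handful of atypically large components straddling many blocks and carrying most of the displacement.
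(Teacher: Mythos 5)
Note first that this paper does not prove Theorem~\ref{t:gt17}: the statement is imported from \cite{Gouere-Theret-17}, so the only proof to compare with is the one given there, which rests on a renormalization (multi-scale) argument combined with greedy-path estimates. Your sketch names the right ingredients, but both directions have genuine gaps. The more serious one is in the implication ``strongly subcritical $\Rightarrow \mu(\nu)>0$'': as written, the hypothesis \eqref{e:nupositif} never enters the quantitative argument. You combine a greedy lattice-animal bound $\sum_{z\in A}\omega_z\le\gamma|A|$ with a geometric bound $|A|\le C\big(\delta n+\sum_{z\in A}\omega_z\big)$; these two inequalities close into a contradiction only if $\gamma C<1$, and nothing forces that: $\gamma$ is the greedy constant of $\nu$ (of the order of $\int_0^\infty \nu([r,+\infty))^{1/d}\,dr$ up to dimensional factors) and can be arbitrarily large. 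If the chain of estimates worked as stated, it would prove $\mu(\nu)>0$ for \emph{every} $\nu$ satisfying \eqref{e:greedy}, which is false: take $\nu=\lambda\,\delta_1$ with $\lambda$ large, so that \eqref{e:greedy} holds, the Boolean model is supercritical and $\mu(\nu)=0$. The mechanism by which subcriticality must act --- showing that a path with $\tau(\pi)\le\delta n$ forces of order $n/r$ annulus crossings inside $\Sigma$ at a well-chosen scale $r$, whose probabilities are made small by \eqref{e:nupositif}, while large balls are relegated to an error term via the greedy estimates --- is precisely what you defer to ``the delicate point''; it is not a finishing touch but the core of the proof in \cite{Gouere-Theret-17}.

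The converse direction is also incomplete: the one-scale computation only yields $\mu(\nu)\le 1-c/2$ (along the subsequence of good radii, which does suffice since the limit exists), and the upgrade to $\mu(\nu)=0$ is asserted in the conditional (``a block argument should produce\dots''); you yourself flag it as the main obstacle, so it is missing rather than proved. There are also smaller repairs needed in the positivity sketch: a cheap path may visit the same component of $\Sigma$ several times (reduce to one visit per component by shortcutting inside $\Sigma$ before summing displacements), and the number of unit cubes met by the visited components is of order $\sum_i(r_i+1)^d$, not $\sum_i r_i$, so the bound $|A|\le C\big(\delta n+\sum_{z\in A}\omega_z\big)$ requires a more careful choice of weights and exponents --- one reason the arguments of \cite{GM-deijfen} and \cite{Gouere-Theret-17} work with the greedy \emph{path} functional $r(\pi)/\ell(\pi)$ rather than lattice animals. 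In summary: the list of tools is the right one, but the two renormalization steps that actually carry the theorem are absent, and the quantitative chain in the positivity direction is unsound as stated.
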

We refer to \cite{Gouere-Theret-17} for more details.
When \eqref{e:momentd} does not hold, $\Sigma=\R^d$ almost surely and therefore $\mu(\nu)=0$.
In the narrow regime where \eqref{e:momentd} holds and \eqref{e:greedy} does not hold, we do not know if $\mu(\nu)=0$ and we have no conjecture.

\subsection{Main results}

\subsubsection{Goal} 

In this paper, we investigate the continuity of $\nu \mapsto \mu(\nu)$.
Here is a weak but simple version of our main result.
Let $(R_n)_n$ be a sequence of positive random variables.
Let $R_\infty$ and $\hat R$ be two positive random variables.
Let $\lambda >0$.
Let $\eta>0$.
Make the following assumptions:
\begin{itemize}
\item For all $n \in \N$, $R_n \le \hat R$.
\item $\E(\hat R^{d+\eta})$ is finite.
\item $\lim_{n \to \infty} \E(|R_n-R_\infty|^{d+\eta}) = 0$.
\end{itemize}
Then
\[
\lim_{n \to \infty} \mu(\lambda \P^{R_n}) = \mu(\lambda \P^R)
\]
where, for any random variable $X$, $\P^X$ denotes the law of $X$.
This result is stated in Corollary \ref{c:mainsuitesuite}.

To state a stronger version of this result, we first need to explain how we couple Boolean model with different driving measures $\nu$.

\subsubsection{Couplings, domination and notations} 
\label{s:couplings}

\paragraph{Coupling.}
Let $\Xi$ be a Poisson point process on $\R^d \times (0,+\infty)$ with intensity measure the Lebesgue measure $\d c \otimes d u$.
We define an admissible map as a map $\cR$ from $(0,+\infty)$ to $[0,+\infty)$ such that
$\cR$ is measurable and $\cR$ vanishes outside a set of finite Lebesgue measure.
For any admissible map we define a Poisson point process $\xi^\cR$ on $\R^d \times (0,+\infty)$ by
\begin{equation}\label{e:xiecriture1}
\xi^\cR = \{(c,\cR(u)), (c,u) \in \Xi \text{ such that } \cR(u)>0\}
\end{equation}
and a measure $\nu^\cR$ on $(0,+\infty)$ by
\[
\nu^\cR(A) = \int_{(0,+\infty)} \1_A(\cR(u)) \d u.
\]
As $\cR$ vanishes outside a set of finite Lebesgue measure, $\nu^\cR$ is a finite measure.
The process $\xi^\cR$ is a Poisson point process on $\R^d \times (0,+\infty)$ with intensity measure $\d c \otimes \nu^\cR$.
This construction of the process will enable us to couple such processes with different $\cR$ in a natural way.

\paragraph{Notations.}
We define $r^\cR:\R^d \to \R_+$ as follows. 
Let $c \in \R^d$.
If there exists $u \in (0,+\infty)$ such that $(c,u) \in \Xi$, then such a $u$ is unique and therefore it makes sense to set $r^\cR(c)=\cR(u)$.
Otherwise, we set $r^\cR(c)=0$.
We thus can rewrite  \eqref{e:xiecriture1} as follows:
\begin{equation}\label{e:xiecriture2}
\xi^\cR = \{(c,r^\cR(c)), c \in \R^d : r^\cR(c)>0\}.
\end{equation}

\paragraph{Representation of a measure.} For any finite measure $\nu$ on $(0,+\infty)$, we define an admissible map $\cR_\nu$ by 
\begin{equation} \label{e:skorokhod}
\cR_\nu(u)=\sup \{r > 0 : \nu([r,+\infty) ) \ge u\}
\end{equation}
with the convention $\cR_\nu(u)=0$ if the set is empty.
This admissible map is such that $\nu^{\cR_\nu}=\nu$.
This is a variation on the notion of generalized inverse distribution function\footnote{Here is a proof
of the fact that $\cR_\nu$ is an admissible map such that $\nu^{\cR_\nu}=\nu$.
The map $\cR_\nu$ defined by \eqref{e:skorokhod} is non-increasing and therefore measurable.
As the map moreover takes values in $[0,+\infty)$ and vanishes for $u$ larger than the total mass of $\nu$, $\cR_\nu$ is an admissible map.
The fact that $\nu^{\cR_\nu}=\nu$ is a consequence of the fact that, for any $r>0$, we have
\[
\{u>0 : \cR_\nu(u) \ge r\} = \{u>0 : u \le \nu([r,+\infty))\}
\]
and thus $\nu^{\cR_\nu}([r,+\infty)) =  \nu([r,+\infty))$. Let us prove the above equality.
If $u \le \nu([r,+\infty))$, then $\cR_\nu(u) \ge r$ by definition of $\cR_\nu$.
If $u > \nu([r,+\infty))$, then there exists $s \in (0,r)$ such that $u > \nu([s,+\infty))$.
In this case,  by definition of $\cR_\nu$, we have $\cR_\nu(u) \le s < r$. 
This ends the proof.
}.

\paragraph{Domination and coupling.} Let $\nu_1$ and $\nu_2$ be two finite measures on $(0,+\infty)$. 
We say that $\nu_1$ is dominated by $\nu_2$, which we denote by $\nu_1 \prec \nu_2$, if, for all $r > 0$,
\[
\nu_1([r,+\infty)) \le \nu_2([r,+\infty)).
\]
If $\nu_1 \prec \nu_2$, then $\cR_{\nu_1} \le \cR_{\nu_2}$.
The converse is true.
If $\cR_1$ and $\cR_2$ are two admissible maps such that $\cR_1 \le \cR_2$, then $\nu^{\cR_1} \le \nu^{\cR_2}$. 

Let $\nu_1$ and $\nu_2$ be two finite measures on $(0,+\infty)$ such that $\nu_1 \prec \nu_2$.
Using the coupling described above, we can define $\xi_1:=\xi^{\cR_{\nu_1}}$ and  $\xi_2:=\xi^{\cR_{\nu_2}}$.
These are two Poisson point processes driven by $\nu_1$ and $\nu_2$.
For short, we write $r_1 := r^{\cR_{\nu_1}}$ and  $r_2 := r^{\cR_{\nu_2}}$.
We can write
\[
\xi_1 = \{(c,r_1(c)), c \in \R^d : r_1(c) > 0\} \text{ and } \xi_2 = \{(c,r_2(c)), c \in \R^d : r_2(c) > 0\}.
\]
As $\cR_{\nu_1} \le \cR_{\nu_2}$, we have, for all $c \in \R^d$, $r_1(c) \le r_2(c)$. 
This is a consequence of the link between $r_i$ and $\cR_{\nu_i}$, see above.

\subsubsection{Main results} 

Here is our main result. 

\begin{theorem} \label{t:main} Let $\cR_1, \cR_2$ and $\hat\cR$ be three admissible maps.
Assume
\begin{enumerate}
\item $\cR_1 \le \hat\cR$ and $\cR_2 \le \hat\cR$.
\item $\int_0^\infty \nu^{\hat\cR}([r,+\infty))^{1/d} \d r < \infty$.
\item $\mu(\nu^{\hat\cR})>0$.
\end{enumerate}
There exists $C=C(d,\hat\cR)$ such that 
\[
\left|\mu(\nu^{\cR_1})-\mu(\nu^{\cR_2})\right| \le C \int_0^\infty \nu^{|\cR_1-\cR_2|}([r,+\infty))^{1/d} \d r.
\]
\end{theorem}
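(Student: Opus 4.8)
The plan is to run everything on the single probability space provided by the coupling of Section~\ref{s:couplings}: all three processes are built from the same $\Xi$, so that $\Sigma^{\cR_1}\subseteq\Sigma^{\hat\cR}=:\hat\Sigma$ and $\Sigma^{\cR_2}\subseteq\hat\Sigma$, with radii $r_1(c)\le\hat r(c)$ and $r_2(c)\le\hat r(c)$ attached to a common family of centres. First I would reduce to the nested case. Since $\min(\cR_1,\cR_2)$ is again an admissible map with $\min(\cR_1,\cR_2)\le\hat\cR$ and $|\cR_i-\min(\cR_1,\cR_2)|\le|\cR_1-\cR_2|$ pointwise, and since the right-hand side of the claimed inequality only increases when $|\cR_1-\cR_2|$ is replaced by a larger map (for the order $\prec$), a triangle inequality through $\mu(\nu^{\min(\cR_1,\cR_2)})$ reduces the statement, at the cost of a factor $2$ in $C$, to the case $\cR_1\le\cR_2\le\hat\cR$, i.e. $\Sigma^{\cR_1}\subseteq\Sigma^{\cR_2}\subseteq\hat\Sigma$. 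In that case $\mu(\nu^{\cR_1})\ge\mu(\nu^{\cR_2})$ (more coverage means smaller travel times), so it is enough to bound $\mu(\nu^{\cR_1})-\mu(\nu^{\cR_2})$ from above; using $\mu(\nu^{\cR_i})=\inf_k\E[T_i(0,ke_1)]/k$ from~\eqref{e:nuinfimum} and $\E[T_2(0,ne_1)]/n\to\mu(\nu^{\cR_2})$ from Theorem~\ref{t:timeconstant}, it suffices to exhibit, for each large $n$, a random path from $0$ to $ne_1$ whose $\tau_1$-time is at most $T_2(0,ne_1)+\varepsilon$ plus an error term $E_n$ with $\E[E_n]\le C'\,n\int_0^\infty\nu^{|\cR_1-\cR_2|}([r,+\infty))^{1/d}\,\d r$.

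Next comes the path surgery. I would take $\pi$ a $\tau_2$-near-geodesic from $0$ to $ne_1$, simple with finitely many segments, and shortcut it inside every connected component of $\Sigma^{\cR_2}$: for each component $K$ that $\pi$ meets, replace the part of $\pi$ spent inside $K$ by a single free excursion from its first entry point $P$ to its last exit point $Q$ (this does not increase $\tau_2$). Because $\Sigma^{\cR_1}\subseteq\Sigma^{\cR_2}$, the excess $\tau_1-\tau_2$ of this $\pi$ is supported on $\Sigma^{\cR_2}\setminus\Sigma^{\cR_1}\subseteq\bigcup_{c:\,r_1(c)<r_2(c)}\bigl(B(c,r_2(c))\setminus\overline{B(c,r_1(c))}\bigr)$. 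Writing $K=\bigcup_{c\in\cC_K^{\mathrm{full}}}B(c,r_2(c))$, I would reroute the excursion through $\Sigma^{\cR_1}$ along a \emph{simple} path $c_0,\dots,c_m$ in the intersection graph of the balls forming $K$, running from a ball containing $P$ to a ball containing $Q$: from $P$ move radially inward to $\partial B(c_0,r_1(c_0))$ ($\tau_1$-cost $\le r_2(c_0)-r_1(c_0)$); between two consecutive overlapping balls move along $[c_i,c_{i+1}]\subseteq\Sigma^{\cR_2}$ from $\partial B(c_i,r_1(c_i))$ to $\partial B(c_{i+1},r_1(c_{i+1}))$, at $\tau_1$-cost at most $|c_i-c_{i+1}|-r_1(c_i)-r_1(c_{i+1})\le(r_2(c_i)-r_1(c_i))+(r_2(c_{i+1})-r_1(c_{i+1}))$ since $B(c_i,r_2(c_i))$ and $B(c_{i+1},r_2(c_{i+1}))$ overlap; inside each $B(c_j,r_1(c_j))\subseteq\Sigma^{\cR_1}$ travel is free; finally exit radially from $\partial B(c_m,r_1(c_m))$ to $Q$ ($\tau_1$-cost $\le r_2(c_m)-r_1(c_m)$). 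Setting $\cC_K=\{c_0,\dots,c_m\}$ and summing over $j$ and then over the components met by $\pi$, the resulting path $\pi_1$ satisfies $\tau_1(\pi_1)\le\tau_2(\pi)+2\sum_{c\in\cC}(r_2(c)-r_1(c))$, where $\cC=\bigcup_K\cC_K$ is a random set of discrepancy balls, each lying in a $\Sigma^{\cR_2}$-component, hence in a $\hat\Sigma$-component, that meets $\pi$. This yields $E_n:=2\sum_{c\in\cC}(r_2(c)-r_1(c))$.

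The crux is then to prove $\E[E_n]\le C'\,n\int_0^\infty\nu^{|\cR_1-\cR_2|}([r,+\infty))^{1/d}\,\d r$, and this is where hypotheses~(2) and~(3) enter. Under~\eqref{e:greedy} for $\hat\cR$ and $\mu(\nu^{\hat\cR})>0$, the estimates of~\cite{Gouere-Theret-17} behind Theorem~\ref{t:gt17} give quantitative control on geodesics in $\hat\Sigma$: since $\tau_{\hat\Sigma}(\pi)\le\tau_2(\pi)=O(n)$, after the above shortcuts $\pi$ may be taken with $\E[\ell(\pi)]=O(n)$, and the union of $\hat\Sigma$-clusters it meets — which contains $\cC$ — has, in the appropriate greedy sense, "size $O(n)$", with the total $\hat r$-weight of the $\hat\Sigma$-balls it meets bounded at every dyadic scale. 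One then applies a greedy lattice animal/path estimate of Martin's type~\cite{Martin-greedy}, exactly as in~\cite{Gouere-Theret-17}, to the marked point process of centres weighted by $r_2(c)-r_1(c)=|r_1(c)-r_2(c)|$: since $\#\{c\in\cdot:\,|r_1(c)-r_2(c)|\ge r\}$ has intensity $\nu^{|\cR_1-\cR_2|}([r,+\infty))\,\d c$, the maximal total weight carried by such balls inside an animal of size $O(n)$, summed over the dyadic radius scales (the number of large balls at each scale being bounded via~\eqref{e:greedy} for $\hat\cR$), is at most $C'\,n\int_0^\infty\nu^{|\cR_1-\cR_2|}([r,+\infty))^{1/d}\,\d r$. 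Combining with the surgery step, dividing by $n$, letting $n\to\infty$ and $\varepsilon\to0$, and reinstating the general case via the reduction of the first step finishes the proof.

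I expect the genuine difficulty to be concentrated in this last step: reconciling the "range" of large balls (a ball of radius $\rho$ can influence a geodesic from distance $\rho$) and the control of how strongly such a geodesic can exploit $\hat\Sigma$ — which is precisely what the greedy condition and positivity for $\hat\cR$ purchase — with the greedy-animal estimate that converts the discrepancy law $\nu^{|\cR_1-\cR_2|}$ into the integral $\int_0^\infty\nu^{|\cR_1-\cR_2|}([r,+\infty))^{1/d}\,\d r$. By comparison, the geometry of the path surgery is fiddly — components of $\Sigma^{\cR_2}$ are not convex and the balls overlap — but routine once detours are charged to intersection-graph paths as above, and the reduction to the nested case is soft.
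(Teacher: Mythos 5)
Your overall strategy is the same as the paper's: couple all processes through a single $\Xi$, reduce to the nested case via $\min/\max$ (the paper uses $\cR^\pm$ and sandwiches rather than a triangle inequality, but that is cosmetic), compare passage times along a path of the larger process so that the excess is charged to $2\sum_c(r_2(c)-r_1(c))$ over balls met (the paper does this cleanly with good geodesics, Lemma \ref{l:modification-sigma} and Lemma \ref{l:good}, avoiding your surgery through intersection-graph chains), and finally convert that discrepancy sum into $C\,s\int_0^\infty\nu^{|\cR_1-\cR_2|}([r,+\infty))^{1/d}\,\d r$ by a greedy-path bound (Corollary \ref{c:greedy}) multiplied by a linear bound on the length of the path used.

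The genuine gap is exactly at the point you yourself flag as the crux: you assert that, because $\tau_2(\pi)=O(n)$ and $\mu(\nu^{\hat\cR})>0$, ``the estimates of \cite{Gouere-Theret-17} behind Theorem \ref{t:gt17}'' allow you to take the (shortcut) near-geodesic with $\E[\ell(\pi)]=O(n)$, and hence to bound the total discrepancy weight of the clusters it meets. No such estimate exists in \cite{Gouere-Theret-17}: Theorem \ref{t:gt17} only gives positivity of the time constant, and positivity alone does not bound geodesic length --- a small $\hat\Sigma$-passage time is compatible with a long path unless one proves otherwise. The required statement is precisely Theorem \ref{t:controle-l} of the present paper, which must moreover hold \emph{uniformly over all measures $\nu\preceq\hat\nu$} (your geodesic is a $\tau_2$-geodesic, not a $\hat\tau$-geodesic, and your detour chains add further length inside clusters, so a length bound for $\hat\Sigma$-geodesics alone would not even suffice). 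Proving it is the main technical content of the paper: it needs the $\rho$-skeleton decomposition, the decoupled quantity $T^\square$ controlled via the BK inequality (Proposition \ref{p:travel-time-skeleton}), the enhanced skeleton and greedy-path control of large balls (Proposition \ref{p:reduction}), and the animal-counting bound of Proposition \ref{p:controlellrho}. Likewise, your final ``Martin-type greedy animal estimate over dyadic scales'' is only usable once the discrepancy centres are organised along a path of length $O(n)$ --- again the same missing ingredient. So the skeleton of your argument matches the paper, but the theorem's real difficulty (the uniform linear control of geodesic length under hypotheses (2) and (3)) is assumed rather than proved.
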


The following simple consequence is weaker.
We estimate that it is worth stating because its statement is simpler as it involves more familiar quantities.

\begin{corollary} \label{c:main} Let $\cR_1, \cR_2$ and $\hat\cR$ be three admissible maps.
Let $\eta>0$.
Assume
\begin{enumerate}
\item $\cR_1 \le \hat\cR$ and $\cR_2 \le \hat\cR$.
\item $ \int_0^\infty \hat\cR(u)^{d+\eta} \d u< \infty$.
\item $\mu(\nu^{\hat\cR})>0$.
\end{enumerate}
There exists $C=C(d,\hat\cR,\eta)$ such that 
\[
\left|\mu(\nu^{\cR_1})-\mu(\nu^{\cR_2})\right| \le C \left(\int_0^\infty |\cR_1(u)-\cR_2(u)|^{d+\eta} \d u\right)^{1/(d+\eta)}.
\]
\end{corollary}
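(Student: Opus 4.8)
The plan is to derive Corollary \ref{c:main} directly from Theorem \ref{t:main}, by estimating the integral $\int_0^\infty \nu^{|\cR_1-\cR_2|}([r,+\infty))^{1/d}\,\d r$ appearing on the right-hand side of Theorem \ref{t:main} in terms of $\bigl(\int_0^\infty |\cR_1(u)-\cR_2(u)|^{d+\eta}\,\d u\bigr)^{1/(d+\eta)}$.

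First I would check that the hypotheses of Theorem \ref{t:main} hold. Hypotheses (1) and (3) coincide with those of the corollary, so only (2) requires an argument. The change of variables $\int_0^\infty f(\hat\cR(u))\,\d u = \int_{(0,+\infty)} f(r)\,\nu^{\hat\cR}(\d r)$, valid for nonnegative measurable $f$, turns hypothesis (2) of the corollary into $\int_{(0,+\infty)} r^{d+\eta}\,\nu^{\hat\cR}(\d r) < \infty$. Since $\ln_+(r)=0$ for $r \le 1$ and $r^d\ln_+(r)^{d-1+\eta/2} = O(r^{d+\eta})$ as $r\to+\infty$, there is a constant $C_{d,\eta}$ with $r^d\ln_+(r)^{d-1+\eta/2}\le C_{d,\eta}\,r^{d+\eta}$ for all $r>0$, whence $\int_{(0,+\infty)} r^d\ln_+(r)^{d-1+\eta/2}\,\nu^{\hat\cR}(\d r) < \infty$; then \eqref{e:greedy2} applied with $\epsilon=\eta/2$ gives $\int_0^\infty \nu^{\hat\cR}([r,+\infty))^{1/d}\,\d r < \infty$, which is hypothesis (2) of Theorem \ref{t:main}.

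Next, set $g = |\cR_1-\cR_2|$; this is an admissible map, and $g \le \hat\cR$ since $\cR_1,\cR_2$ are nonnegative and bounded by $\hat\cR$. Let $M = \bigl|\{u>0 : \hat\cR(u)>0\}\bigr|$, a finite quantity depending only on $\hat\cR$. For $r>0$ one has $\nu^g([r,+\infty)) = \bigl|\{u>0 : g(u)\ge r\}\bigr|$, and hence two bounds are available: $\nu^g([r,+\infty)) \le M$ because $\{g>0\}\subseteq\{\hat\cR>0\}$, and, by the Markov inequality, $\nu^g([r,+\infty)) \le r^{-(d+\eta)}\int_0^\infty g(u)^{d+\eta}\,\d u$. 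Writing $N = \bigl(\int_0^\infty g(u)^{d+\eta}\,\d u\bigr)^{1/(d+\eta)}$, splitting $\int_0^\infty \nu^g([r,+\infty))^{1/d}\,\d r$ at the threshold $r=N$, using the first bound on $(0,N)$ and the second on $(N,+\infty)$, a direct computation gives $\int_0^\infty \nu^g([r,+\infty))^{1/d}\,\d r \le \bigl(M^{1/d}+\tfrac{d}{\eta}\bigr)N$. Plugging this into Theorem \ref{t:main} yields the corollary with $C = C(d,\hat\cR)\bigl(M^{1/d}+\tfrac{d}{\eta}\bigr)$, which depends only on $d$, $\hat\cR$ and $\eta$.

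I do not expect a genuine obstacle: the proof is a routine assembly of Theorem \ref{t:main}, the change of variables relating $\cR$ to $\nu^\cR$, the elementary implications \eqref{e:greedy2}, and a Markov bound. The one point deserving attention is that the constant must not depend on $\cR_1$ or $\cR_2$; this is precisely what forces one to bound $|\{g>0\}|$ by $M=|\{\hat\cR>0\}|$ and to split the integral at the threshold $N$, rather than controlling the region near $0$ by a cruder estimate involving the support of $g$ itself.
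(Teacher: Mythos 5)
Your proof is correct and follows essentially the same route as the paper: the paper's Lemma \ref{l:quantitegreedy} bounds $\nu^{|\cR_1-\cR_2|}([r,+\infty))$ by $\min\bigl(\lambda,\ r^{-(d+\eta)}\int|\cR_1-\cR_2|^{d+\eta}\bigr)$ with $\lambda=\nu^{\hat\cR}[(0,+\infty)]$ (your $M$) and integrates the $1/d$ power, which is exactly your Markov-plus-total-mass bound with the split at the threshold $N$, and then feeds the result into Theorem \ref{t:main}. The only cosmetic difference is that the paper verifies the greedy condition for $\nu^{\hat\cR}$ by applying the same lemma to $\hat\cR$, whereas you pass through \eqref{e:greedy2}; both are fine.
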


\paragraph{Plan of the proof of Theorem \ref{t:main}.}
Consider two admissible maps $\cR_1, \cR_2$, with $\cR_1 \leq \cR_2$ for instance, and denote for short by $T_i (s)$ the passage time from $0$ to $B(0,s)^c$ ($s>0$) associated with the point process $\xi^{\cR_i}$, and write $r_i =r^{\cR_i}$. By coupling we get $T_2 (s) \leq T_1 (s)$, and looking at the difference of passage time for the two processes along a nicely behaved geodesic $\pi$ for $T_2(s)$, we obtain
\[
T_1 (s) \leq T_2 (s) + 2 \sum_{x\in \pi} (r_2 (x) - r_1 (x)),
\]
thus
\[
\frac{T_1 (s)}{s} \leq \frac{T_2 (s)}{s} + 2 \frac{\ell (\pi)}{s} \frac{\sum_{x\in \pi} (r_2 (x) - r_1 (x))}{\ell (\pi)},
\]
where $\ell(\pi)$ is the Euclidean length of $\pi$, defined as the sum of the Euclidean length of its segments.
At this stage, we need two tools:
\begin{itemize}
\item[$(i)$] a control on $\ell (\pi) /s$: it is given by Theorem \ref{t:controle-l}; this is the main part of the proof, and we obtain roughly speaking the asymptotic upper bound $\ell (\pi) /s \leq C$ for a constant $C$ and large enough $s$. The plan of the proof of Theorem \ref{t:controle-l} is given just after its statement.
\item[$(ii)$] a control on $\sum_{x\in \pi} (r_2 (x) - r_1 (x))/\ell (\pi)$: it is given by known results on the so called greedy paths (see Corollary \ref{c:greedy}), and we obtain the asymptotic upper bound $\sum_{x\in \pi} (r_2 (x) - r_1 (x))/\ell (\pi) \leq C' \int_0^\infty \nu^{\cR_2 - \cR_1} ([r,+\infty))^{1/d} dr$ for a constant $C'$ and large enough $s$.
\end{itemize}
Letting $s$ go to infinity, this gives the desired control on $|\mu(\nu^{\cR_1}) - \mu(\nu^{\cR_2})|$. The hypotheses required in Theorem \ref{t:main} find their origins in the controls $(i)$ and $(ii)$ described above. These controls must be uniform in the maps $\cR_i$, thus a domination by a map $\hat \cR$ is required. Step $(i)$ is a control on the length of a geodesic: it is not surprising that it requires to consider models in the subcritical regime of first passage percolation, i.e., such that $\mu (\hat \cR) >0$ (see the remark below the statement of Theorem  \ref{t:controle-l} for more details). Step $(ii)$ makes use of results on greedy paths, that require the greedy condition $\int_0^\infty \nu^{\hat\cR}([r,+\infty))^{1/d} \d r < \infty$ to hold.

The following result is also a corollary of Theorem \ref{t:main}.

\begin{theorem} \label{t:mainsuite} Let $\cR_\infty$ and $\hat\cR$ be two admissible maps.
Let $(\cR_n)_n$ be a sequence of admissible maps.
Assume 
\begin{enumerate}
\item For all $n \in \N \cup \{\infty\}$, $\cR_n \le \hat\cR$.
\item $\int_0^\infty \nu^{\hat\cR}([r,+\infty))^{1/d} \d r < \infty$.
\item $\cR_n$ converges almost everywhere (with respect to the Lebesgue measure) to $\cR_\infty$.
\end{enumerate}
Then 
\[
\lim_{n \to \infty} \mu(\nu^{\cR_n}) =  \mu(\nu^{\cR_\infty}).
\]
\end{theorem}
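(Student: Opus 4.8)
}
The whole argument is built on Theorem~\ref{t:main}. The mechanism is the following: if $\cQ$ is an admissible map with $\cR_n\le\cQ$ (at least for large $n$), $\cR_\infty\le\cQ$, $\cQ\le\hat\cR$ and $\mu(\nu^{\cQ})>0$, then Theorem~\ref{t:main} gives
\[
\bigl|\mu(\nu^{\cR_n})-\mu(\nu^{\cR_\infty})\bigr|\le C(d,\cQ)\int_0^\infty\nu^{|\cR_n-\cR_\infty|}\bigl([r,+\infty)\bigr)^{1/d}\,\d r,
\]
and I would show the right-hand side tends to $0$ by a double application of dominated convergence. Since $|\cR_n-\cR_\infty|\le\cQ$ pointwise, $\nu^{|\cR_n-\cR_\infty|}([r,+\infty))=\int_0^\infty\1\{|\cR_n(u)-\cR_\infty(u)|\ge r\}\,\d u\le\nu^{\cQ}([r,+\infty))$; by hypothesis~3 the integrand goes to $0$ for almost every $u$ and is dominated by $\1\{\cQ(u)\ge r\}$, which is integrable as $\nu^{\cQ}([r,+\infty))<\infty$, so $\nu^{|\cR_n-\cR_\infty|}([r,+\infty))\to0$ for each $r>0$; and these quantities to the power $1/d$ are dominated by $\nu^{\cQ}([r,+\infty))^{1/d}$, integrable over $(0,+\infty)$ because $\cQ\le\hat\cR$ and hypothesis~2 holds, so a second dominated convergence concludes.

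If $\mu(\nu^{\hat\cR})>0$ we simply take $\cQ=\hat\cR$ and are done. So assume $\mu(\nu^{\hat\cR})=0$. I would first record that \emph{upper semicontinuity always holds}: $\limsup_n\mu(\nu^{\cR_n})\le\mu(\nu^{\cR_\infty})$. Realize all the processes on one space through the master process $\Xi$ of Section~\ref{s:couplings}, and write $T_n,\tau_n$ for the quantities attached to $\Sigma^{\cR_n}$. Almost surely the $u$-coordinates of the points of $\Xi$ avoid the Lebesgue-null set on which $\cR_n\not\to\cR_\infty$, so $r^{\cR_n}(c)\to r^{\cR_\infty}(c)$ for every centre $c$ of $\Xi$; since the balls are open this forces $\Sigma^{\cR_\infty}\subseteq\liminf_n\Sigma^{\cR_n}$, hence, by reverse Fatou along segments, $\limsup_n\tau_n(x,y)\le\tau_\infty(x,y)$ for all $x,y$, hence $\limsup_n\tau_n(\pi)\le\tau_\infty(\pi)$ for each fixed path $\pi$, hence $\limsup_n T_n(0,ke_1)\le T_\infty(0,ke_1)$ (take $\pi$ from $0$ to $ke_1$ and pass to the infimum), hence $\limsup_n\E[T_n(0,ke_1)]\le\E[T_\infty(0,ke_1)]$ by reverse Fatou (as $T_n(0,ke_1)\le k$), and finally, using \eqref{e:nuinfimum}, $\limsup_n\mu(\nu^{\cR_n})\le\E[T_\infty(0,ke_1)]/k$ for every $k$, whence $\limsup_n\mu(\nu^{\cR_n})\le\mu(\nu^{\cR_\infty})$. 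In particular, if $\mu(\nu^{\cR_\infty})=0$ then $\mu(\nu^{\cR_n})\to0=\mu(\nu^{\cR_\infty})$ and this case is settled.

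There remains the case $\mu(\nu^{\hat\cR})=0<\mu(\nu^{\cR_\infty})$. Put $\cU_k=\sup_{n\ge k}\cR_n$: this is an admissible map with $\cR_\infty\le\cU_k\le\hat\cR$, with $\cR_m\le\cU_k$ for every $m\ge k$, and $\cU_k$ decreases almost everywhere to $\cR_\infty$. If some $\cU_{k_0}$ satisfies $\mu(\nu^{\cU_{k_0}})>0$, then applying the mechanism of the first paragraph with $\cQ=\cU_{k_0}$ (and $n\ge k_0$) finishes the proof. The whole matter thus reduces to the following claim, which I expect to be the real obstacle: \emph{if $\mu(\nu^{\cR_\infty})>0$ then $\mu(\nu^{\cU_{k_0}})>0$ for $k_0$ large}; that is, positivity of the time constant is stable under the ``thickening'' $\cU_k\downarrow\cR_\infty$. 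I would establish this through Theorem~\ref{t:gt17} and the percolation estimates behind it. As $\mu(\nu^{\cR_\infty})>0$ and the greedy condition holds, $\nu^{\cR_\infty}$ is strongly subcritical, so the crossing probability $\P[\exists\text{ path inside }\Sigma^{\cR_\infty}\text{ from }B(0,r)\text{ to }B(0,2r)^c]$ drops below the dimensional threshold driving the renormalization step in the proof of Theorem~\ref{t:gt17} at some large scale $r_1$. The models $\Sigma^{\cU_k}$ decrease in $k$, and $\bigcap_k\Sigma^{\cU_k}\subseteq\overline{\Sigma^{\cR_\infty}}$ (a point covered by every $\Sigma^{\cU_k}$ is covered infinitely often by one fixed ball, among the almost surely finitely many that can cover it, whose radius tends to the corresponding radius for $\cR_\infty$); since almost surely no two balls are tangent, the existence of a crossing path in $\overline{\Sigma^{\cR_\infty}}$ has, for almost every choice of radii, the same probability as a crossing event in $\Sigma^{\cR_\infty}$. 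A compactness argument for crossing paths of decreasing closed sets then gives $\lim_k\P[\exists\text{ path inside }\Sigma^{\cU_k}\text{ crossing the annulus of radii }(r_1,2r_1)]\le\P[\exists\text{ path inside }\Sigma^{\cR_\infty}\text{ crossing a slightly larger annulus}]$, still below the threshold, so that for $k_0$ large the crossing probability for $\Sigma^{\cU_{k_0}}$ is below the threshold; hence $\nu^{\cU_{k_0}}$ is strongly subcritical, and Theorem~\ref{t:gt17} (greedy condition once more) yields $\mu(\nu^{\cU_{k_0}})>0$. The routine content here is the dominated-convergence wrapper around Theorem~\ref{t:main} together with the soft upper-semicontinuity argument; the delicate point is precisely this stability of strong subcriticality under $\cU_k\downarrow\cR_\infty$, which forces one to reopen the percolation machinery of Theorem~\ref{t:gt17} (a compactness argument for crossing paths, the irrelevance of the boundaries of the balls, and the one-step renormalization criterion for positivity of the time constant).
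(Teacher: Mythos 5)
Your proposal is correct and follows essentially the same route as the paper: upper envelopes $\sup_{n\ge k}\cR_n$, openness of the strongly subcritical regime obtained by reopening the finite-scale renormalization criterion behind Theorem~\ref{t:gt17} (this is exactly the paper's Lemma~\ref{l:subopen}, proved via the criterion of \cite{Gouere-Theret-17} together with the fact that the crossing event at a fixed scale depends on finitely many balls whose radii converge), and then Theorem~\ref{t:main} combined with the double dominated convergence you describe. The only noticeable variation is the upper bound $\limsup_n\mu(\nu^{\cR_n})\le\mu(\nu^{\cR_\infty})$, which you derive directly from almost sure upper semicontinuity of the passage times under the coupling plus the infimum formula \eqref{e:nuinfimum}, whereas the paper routes this through the lower envelopes $\inf_{k\ge n}\cR_k$ and convergence of $\E[T(0,ke_1)]$; both are soft arguments of the same nature.
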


\paragraph{Plan of the proof of Theorem \ref{t:mainsuite}.}
It can be divided into two parts that are, roughly speaking, the upper bound $\limsup \mu (\nu^{\cR_n}) \leq \mu(\nu^{\cR_\infty}) $, and the lower bound $\liminf \mu (\nu^{\cR_n}) \geq \mu(\nu^{\cR_\infty})$. The proof of the upper bound is the easy part, and relies on the definition of the time constant as an infimum. The proof of the lower bound is the most delicate part. It is trivial when $\mu (\nu ^{\cR _\infty}) =0$. Otherwise, it is a consequence of Theorem \ref{t:main}.

We state as a corollary a special case of Theorem \ref{t:mainsuite}.
We denote by $\P_X$ the distribution of a random variable $X$.

\begin{corollary} \label{c:mainsuitesuite}
Let $R_\infty$ and $\hat R$ be two positive random variables.
Let $(R_n)_n$ be a sequence of positive random variables.
Let $\lambda_\infty , (\lambda_n)_n$ be positive real numbers.
Assume 
\begin{enumerate}
\item For all $n \in \N \cup \{\infty\}$, $\P^{R_n}$ is dominated by $\P^{\hat R}$.
\item $\int_0^\infty \big(\P[\hat R \ge r]\big)^{1/d} \d r < \infty$.
\item $P^{R_n}$ converges weakly to $ \P^{R_\infty}$.
\item $\lim_{n\rightarrow \infty} \lambda_n = \lambda_\infty$.
\end{enumerate}
Then 
\[
\lim_{n \to \infty} \mu\big(\lambda_n \P^{R_n}\big) =   \mu\big(\lambda_\infty \P^{R_\infty}\big).
\]
\end{corollary}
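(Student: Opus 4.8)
The plan is to deduce Corollary \ref{c:mainsuitesuite} from Theorem \ref{t:mainsuite} by translating the probabilistic hypotheses into hypotheses about admissible maps via the representation $\cR_\nu$ introduced in Section \ref{s:couplings}. First I would observe that scaling the total mass is just another instance of the same machinery: for $\lambda>0$ and a finite measure $\nu$, the measure $\lambda\nu$ is represented by the admissible map $u \mapsto \cR_\nu(u/\lambda)$, since $\nu^{\cR}([r,\infty))$ gets multiplied by $\lambda$ under this horizontal dilation. Concretely, set $\cR_n := \cR_{\lambda_n \P^{R_n}}$ for $n \in \N \cup \{\infty\}$, so that $\nu^{\cR_n} = \lambda_n \P^{R_n}$ by the footnote computation, and the problem reduces to verifying the three hypotheses of Theorem \ref{t:mainsuite} for these maps together with a suitable dominating map $\hat\cR$.

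The natural candidate for the dominating map is $\hat\cR := \cR_{\hat\lambda \P^{\hat R}}$ where $\hat\lambda := \sup_n \lambda_n < \infty$ (the supremum is finite because $\lambda_n \to \lambda_\infty$). For hypothesis 1, I would use the stated equivalence $\nu_1 \prec \nu_2 \iff \cR_{\nu_1} \le \cR_{\nu_2}$: since $\P^{R_n} \prec \P^{\hat R}$ by assumption and $\lambda_n \le \hat\lambda$, we get $\lambda_n \P^{R_n} \prec \hat\lambda \P^{\hat R}$, hence $\cR_n \le \hat\cR$. For hypothesis 2, the greedy integral $\int_0^\infty \nu^{\hat\cR}([r,\infty))^{1/d}\,\d r = \int_0^\infty (\hat\lambda\, \P[\hat R \ge r])^{1/d}\,\d r = \hat\lambda^{1/d}\int_0^\infty (\P[\hat R \ge r])^{1/d}\,\d r$, which is finite by assumption 2 of the corollary.

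The only hypothesis requiring real work is hypothesis 3 of Theorem \ref{t:mainsuite}: almost-everywhere convergence of $\cR_n$ to $\cR_\infty$ with respect to Lebesgue measure on $(0,\infty)$. Here is where I would spend the bulk of the argument. Recall $\cR_{\lambda\nu}(u) = \sup\{r>0 : \lambda\,\P[X \ge r] \ge u\}$ is (a variant of) the generalized inverse of the tail function; equivalently, writing $F_n$ for the distribution function of $R_n$, the map $u \mapsto \cR_n(u)$ is a quantile-type function, and weak convergence $\P^{R_n} \Rightarrow \P^{R_\infty}$ together with $\lambda_n \to \lambda_\infty$ should force pointwise convergence of these generalized inverses at every continuity point. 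The standard fact is that if $F_n \to F_\infty$ weakly then the (left-continuous, say) quantile functions $F_n^{-1}$ converge to $F_\infty^{-1}$ at every point where $F_\infty^{-1}$ is continuous, and the set of discontinuities of a monotone function is at most countable, hence Lebesgue-null. I would need to handle the extra parameter $\lambda_n$: since $\cR_{\lambda_n \P^{R_n}}(u) = \cR_{\P^{R_n}}(u/\lambda_n)$ and $u/\lambda_n \to u/\lambda_\infty$, one combines the convergence $\lambda_n \to \lambda_\infty$ with joint continuity/monotonicity to conclude $\cR_n(u) \to \cR_\infty(u)$ at each continuity point $u/\lambda_\infty$ of $\cR_{\P^{R_\infty}}$. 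The main obstacle is thus purely this quantile-convergence lemma — matching up the precise ``$\sup\{r : \text{tail} \ge u\}$'' convention of \eqref{e:skorokhod} with the classical statement about generalized inverses, and checking that the rescaling by $\lambda_n$ does not spoil convergence at the (countably many) problematic points. Once a.e.\ convergence is in hand, Theorem \ref{t:mainsuite} applies verbatim and yields $\mu(\nu^{\cR_n}) = \mu(\lambda_n \P^{R_n}) \to \mu(\lambda_\infty \P^{R_\infty}) = \mu(\nu^{\cR_\infty})$.
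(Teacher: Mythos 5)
Your proposal is correct and follows essentially the same route as the paper: represent $\lambda_n\P^{R_n}$ by the map \eqref{e:skorokhod}, dominate by the map of $\hat\lambda\,\P^{\hat R}$ with $\hat\lambda=\sup_n\lambda_n$, and verify the three hypotheses of Theorem \ref{t:mainsuite}, the only nontrivial one being Lebesgue-a.e.\ convergence of the generalized inverses. The paper proves that last point by a self-contained $\limsup$/$\liminf$ argument at points $u$ outside the (countable) set where the tail function $\lambda_\infty\P[R_\infty\ge r]$ has a flat level set, which is exactly the quantile-convergence argument (countable discontinuity set of a monotone function plus weak convergence and $\lambda_n\to\lambda_\infty$) that you sketch.
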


\subsection{Organization of the paper}
\label{s:plan}

In Section \ref{s:skeleton} we gather vocabulary and known results concerning greedy paths (see step $(ii)$ in the plan of the proof of Theorem \ref{t:main} above). In Section \ref{s:main-controle-longueur} we obtain some uniform controls on the length of a geodesic (see step $(i)$). This is the main part of the proof and we believe that some of the results are of independent interest. In Section \ref{s:preuve:main} we prove Theorem \ref{t:main}, Corollary \ref{c:main}, Theorem \ref{t:mainsuite} and Corollary \ref{c:mainsuitesuite}.

\section{Control on the length of a good geodesic}
\label{s:controle-longueur}

\subsection{Regular paths, good paths, geodesics}
\label{s:skeleton}

\paragraph{Framework and conventions.} 
Let $\nu$ be a finite measure on $(0,+\infty)$.
Let $\xi$ be a Poisson point process driven by $\nu$ (see Section \ref{s:boolean-model}).
As usual, we write
\[
\xi = \{(c,r(c)), c \in \chi\}
\]
where $\chi$ is the projection of $\xi$ on $\R^d$.

We will define several objects depending on $\xi$.
In some parts of the paper, we will deal simultaneously with several such point processes.
In these cases, we will write for example 
"$\xi$-good" instead of "good", "$T(a,b;\xi)$" instead of "$T(a,b)$" and so on.
In some part of the proofs, we will shorten such notations and write for example 
"$T_n^+(a,b)$" instead "$T(a,b ; \xi_n^+)$", "$\hat T(a,b)$" instead "$T(a,b ; \hat\xi)$" and so on.
Each time we abbreviate the notations in this way, we will explicitly mention it.

\paragraph{Regular paths.}
We say that a path $\pi=(x_0,\dots,x_n)$ is regular if, for all $i \in \{1,\dots,n-1\}$, $x_i$ belongs to $\chi$.
Note that we do not require anything about $x_0$ and $x_n$.

\paragraph{Local travel times.} Let $\pi=(x_0,\dots,x_n)$ be a path.
For any $i \in \{1,\dots,n-1\}$, we define
\begin{equation} \label{e:Z}
Z_i^-(\pi) = [x_{i-1},x_i]  \cap B(x_i,r(x_i)) \text{ and } Z_i^+ = [x_i,x_{i+1}] \cap B(x_i,r(x_i)).
\end{equation}
(see Figure \ref{f:fig1}).
\begin{figure}[!ht]
\centering
\def\svgwidth{0.9\textwidth}
\begingroup%
  \makeatletter%
  \providecommand\color[2][]{%
    \errmessage{(Inkscape) Color is used for the text in Inkscape, but the package 'color.sty' is not loaded}%
    \renewcommand\color[2][]{}%
  }%
  \providecommand\transparent[1]{%
    \errmessage{(Inkscape) Transparency is used (non-zero) for the text in Inkscape, but the package 'transparent.sty' is not loaded}%
    \renewcommand\transparent[1]{}%
  }%
  \providecommand\rotatebox[2]{#2}%
  \newcommand*\fsize{\dimexpr\f@size pt\relax}%
  \newcommand*\lineheight[1]{\fontsize{\fsize}{#1\fsize}\selectfont}%
  \ifx\svgwidth\undefined%
    \setlength{\unitlength}{510.07523602bp}%
    \ifx\svgscale\undefined%
      \relax%
    \else%
      \setlength{\unitlength}{\unitlength * \real{\svgscale}}%
    \fi%
  \else%
    \setlength{\unitlength}{\svgwidth}%
  \fi%
  \global\let\svgwidth\undefined%
  \global\let\svgscale\undefined%
  \makeatother%
  \begin{picture}(1,0.29767566)%
    \lineheight{1}%
    \setlength\tabcolsep{0pt}%
    \put(0,0){\includegraphics[width=\unitlength,page=1]{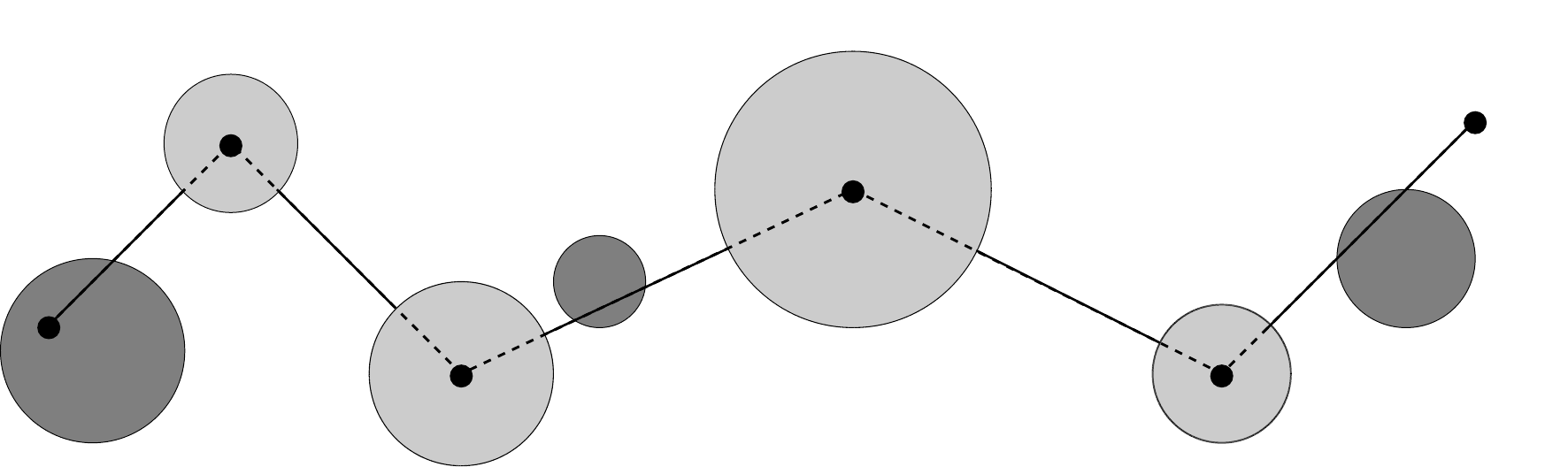}}%
    \put(0.21495187,0.14731486){\color[rgb]{0,0,0}\makebox(0,0)[lt]{\lineheight{1.25}\smash{\begin{tabular}[t]{l}$\pi$\end{tabular}}}}%
    \put(0.53255208,0.191426){\color[rgb]{0,0,0}\makebox(0,0)[lt]{\lineheight{1.25}\smash{\begin{tabular}[t]{l}$x_3$\end{tabular}}}}%
    \put(0.27670748,0.02968515){\color[rgb]{0,0,0}\makebox(0,0)[lt]{\lineheight{1.25}\smash{\begin{tabular}[t]{l}$x_2$\end{tabular}}}}%
    \put(0.14290369,0.21936305){\color[rgb]{0,0,0}\makebox(0,0)[lt]{\lineheight{1.25}\smash{\begin{tabular}[t]{l}$x_1$\end{tabular}}}}%
    \put(0.93984493,0.18848526){\color[rgb]{0,0,0}\makebox(0,0)[lt]{\lineheight{1.25}\smash{\begin{tabular}[t]{l}$x_5$\end{tabular}}}}%
    \put(0.76781147,0.02527405){\color[rgb]{0,0,0}\makebox(0,0)[lt]{\lineheight{1.25}\smash{\begin{tabular}[t]{l}$x_4$\end{tabular}}}}%
    \put(0.0238036,0.05615184){\color[rgb]{0,0,0}\makebox(0,0)[lt]{\lineheight{1.25}\smash{\begin{tabular}[t]{l}$x_0$\end{tabular}}}}%
    \put(0.56637064,0.16790006){\color[rgb]{0,0,0}\makebox(0,0)[lt]{\lineheight{1.25}\smash{\begin{tabular}[t]{l}$Z^{+}_{3}(\pi)$\end{tabular}}}}%
    \put(0.47961871,0.12967041){\color[rgb]{0,0,0}\makebox(0,0)[lt]{\lineheight{1.25}\smash{\begin{tabular}[t]{l}$Z^{-}_{3}(\pi)$\end{tabular}}}}%
    \put(0.50902608,0.28258903){\color[rgb]{0,0,0}\makebox(0,0)[lt]{\lineheight{1.25}\smash{\begin{tabular}[t]{l}$B(x_3, r(x_3))$\end{tabular}}}}%
  \end{picture}%
\endgroup%
\caption{The path $\pi=(x_0,\dots, x_5)$ ; the set $Z(\pi)$ is represented in dashed line ; the local travel time $\widetilde{\tau} (\pi)$ of $\pi$ is equal to the Euclidean length of the parts of $\pi$ that are in solid line, including parts of $\pi$ that are in $\Sigma$ but not in $Z(\pi)$ (see the intersection of $\pi$ with the balls of $\Sigma$ that are represented in dark grey).} 
\label{f:fig1}
\end{figure}
We also set
\[
Z_0^+(\pi) = Z_n^-(\pi) = \emptyset.
\]
We define a new travel time by
\[
\widetilde\tau(\pi) = \sum_{i=1}^n \ell\left([x_{i-1},x_i] \setminus \left( Z^+_{i-1}(\pi) \cup Z^-_i(\pi) \right)\right).
\]
In words, this is the time needed for a walker to travel along $\overline{\pi}$ if it travels at speed $1$ outside 
\[
Z(\pi)=\bigcup_{i=1}^{n-1} \left(Z_i^-(\pi) \cup Z_i^+(\pi)\right)
\]
and at speed infinite inside $Z(\pi)$.

\paragraph{Good paths.}
Let $\pi=(x_0,\dots,x_n)$ be a path.
We say that $\pi$ is a good path if the following conditions are fulfilled:
\begin{enumerate}
 \item For all $i \in \{1,\dots,n-1\}$, $x_i$ belongs to $\chi$. In other words, $\pi$ is regular.
 \item For all $i \in \{1,\dots,n\}$, $[x_{i-1},x_i] \cap\left( Z^+_{i-1}(\pi) \cup Z^-_i(\pi) \right) = [x_{i-1},x_i] \cap \Sigma$.
\end{enumerate}

\begin{lemma} \label{l:good}
 If $\pi=(x_0,\dots,x_n)$ is a path, then
 $
 \widetilde\tau(\pi) \ge \tau(\pi).
 $
 If moreover $\pi$ is $\xi$-good, then
 $
 \widetilde\tau(\pi) = \tau(\pi).
 $
\end{lemma}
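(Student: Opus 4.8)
The plan is to compare, segment by segment, the integrand defining $\widetilde\tau(\pi)$ with the integrand defining $\tau(\pi)$. Recall that $\tau(\pi)=\sum_{i=1}^n \ell([x_{i-1},x_i]\setminus\Sigma)$ since $\tau(x_{i-1},x_i)$ is the one-dimensional measure of $[x_{i-1},x_i]\cap\Sigma^c$, while by definition $\widetilde\tau(\pi)=\sum_{i=1}^n \ell\big([x_{i-1},x_i]\setminus(Z^+_{i-1}(\pi)\cup Z^-_i(\pi))\big)$. So it suffices to prove, for each $i\in\{1,\dots,n\}$, the inclusion
\[
Z^+_{i-1}(\pi)\cup Z^-_i(\pi)\ \subseteq\ [x_{i-1},x_i]\cap\Sigma,
\]
which immediately gives $\ell\big([x_{i-1},x_i]\setminus(Z^+_{i-1}(\pi)\cup Z^-_i(\pi))\big)\ge \ell\big([x_{i-1},x_i]\setminus\Sigma\big)$ and hence $\widetilde\tau(\pi)\ge\tau(\pi)$ after summing over $i$.

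To check the inclusion: by definition $Z^-_i(\pi)=[x_{i-1},x_i]\cap B(x_i,r(x_i))$ and $Z^+_{i-1}(\pi)=[x_{i-1},x_i]\cap B(x_{i-1},r(x_{i-1}))$, with the convention that $Z^+_0(\pi)=Z^-_n(\pi)=\emptyset$. Both sets are therefore contained in $[x_{i-1},x_i]$ by construction. For the containment in $\Sigma$, note that if $1\le i\le n-1$ then $x_i\in\chi$ by regularity is \emph{not} needed here; rather, whenever $r(x_i)>0$ the point $(x_i,r(x_i))$ belongs to $\xi$, so $B(x_i,r(x_i))\subseteq\Sigma$; and if $r(x_i)=0$ then $B(x_i,r(x_i))=\emptyset$. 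Either way $Z^-_i(\pi)\subseteq\Sigma$, and symmetrically $Z^+_{i-1}(\pi)\subseteq\Sigma$. (The endpoint conventions make the boundary cases $i=1$ and $i=n$ vacuous.) This proves the first assertion.

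For the second assertion, suppose $\pi$ is $\xi$-good. Then by the defining condition (2) of a good path, for every $i\in\{1,\dots,n\}$ we have the \emph{equality} $[x_{i-1},x_i]\cap(Z^+_{i-1}(\pi)\cup Z^-_i(\pi))=[x_{i-1},x_i]\cap\Sigma$; since $Z^+_{i-1}(\pi)\cup Z^-_i(\pi)\subseteq[x_{i-1},x_i]$ already, this reads $Z^+_{i-1}(\pi)\cup Z^-_i(\pi)=[x_{i-1},x_i]\cap\Sigma$. Consequently $\ell\big([x_{i-1},x_i]\setminus(Z^+_{i-1}(\pi)\cup Z^-_i(\pi))\big)=\ell\big([x_{i-1},x_i]\setminus\Sigma\big)$ for each $i$, and summing over $i$ yields $\widetilde\tau(\pi)=\tau(\pi)$.

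I do not expect any serious obstacle here: the statement is essentially an unwinding of the definitions, and the only point requiring a little care is the bookkeeping of the endpoint conventions $Z^+_0(\pi)=Z^-_n(\pi)=\emptyset$ and the trivial case $r(\cdot)=0$, so that the per-segment inclusion $Z^+_{i-1}(\pi)\cup Z^-_i(\pi)\subseteq[x_{i-1},x_i]\cap\Sigma$ holds in all cases without invoking regularity for the first inequality.
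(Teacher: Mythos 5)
Your proof is correct and follows essentially the same route as the paper's: the inclusion $Z^+_{i-1}(\pi)\cup Z^-_i(\pi)\subset\Sigma$ (which the paper states globally as $Z(\pi)\subset\Sigma$) gives $\widetilde\tau(\pi)\ge\tau(\pi)$, and the defining segment-wise equality for good paths upgrades this to $\widetilde\tau(\pi)=\tau(\pi)$. Your segment-by-segment bookkeeping, including the convention $r(c)=0$ off $\chi$ and the endpoint conventions, is just a more explicit unwinding of the same argument.
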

\begin{proof}
 The inclusion
 $
 Z(\pi) \subset \Sigma
 $
 always holds by definition of $Z(\pi)$.
 Therefore 
 \[
 \overline{\pi} \cap Z(\pi) \subset \overline\pi \cap \Sigma.
 \]
 When $\pi$ is a good path, the previous inclusion is an equality by definition of good paths.
 The proof then follows by definition of $\tau$ and $\widetilde\tau$. 
\end{proof}

\paragraph{Geodesics and good geodesics.} Let $r>0$.
For any $c \in \R^d$, denote by $S(c,r)$ the sphere of radius $r$ centered at $c$.
 A geodesic from $0$ to $S(0,r)$ is a path $\pi$ from $0$ to $S(0,r)$ such that $\tau(\pi)=T(0,S(0,r))$.
A good geodesic is a geodesic which is, moreover, a good path.

\begin{lemma} \label{l:good-geodesic}
Let $\nu$ be a finite measure on $(0,+\infty)$ satisfying the moment condition \eqref{e:momentd}.
For any $r>0$, with probability one, there exists a good geodesic from $0$ to $S(0,r)$.
\end{lemma}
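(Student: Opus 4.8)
Below is a plan for proving Lemma~\ref{l:good-geodesic}.

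\smallskip

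The statement contains two assertions: that a geodesic from $0$ to $S(0,r)$ exists, and that some geodesic is moreover a good path. The plan is to prove both through a single device — a \emph{rerouting} of arbitrary paths through ball centres — together with the elementary finiteness property of the model: almost surely, for every $r>0$ only finitely many balls of $\Sigma$ meet $\overline{B(0,r)}$, and more generally only finitely many meet any fixed bounded set, in particular any fixed bounded segment. Indeed $B(c,\rho)$ meets $\overline{B(0,r)}$ only if $\|c\|<r+\rho$, and the $\xi$-mass of such pairs $(c,\rho)$ equals, up to a dimensional constant, $\int(r+\rho)^d\,\nu(\d\rho)$, which is finite by \eqref{e:momentd} and the finiteness of $\nu$; for a segment one bounds the relevant $\xi$-mass by a tube volume, finite for the same reasons together with $\int\rho^{d-1}\,\nu(\d\rho)<\infty$ (itself a consequence of $\nu$ finite and \eqref{e:momentd}).

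\smallskip

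The rerouting: given any path $\pi$, its image $\overline\pi$ meets finitely many balls $B(c_1,r(c_1)),\dots,B(c_K,r(c_K))$; insert into $\pi$ the points where its segments cross the spheres $S(c_j,r(c_j))$, which alters neither $\overline\pi$ nor $\tau(\pi)$ and makes the interior of each segment lie either in $\Sigma$ or in the exterior of $\Sigma$. Replace each maximal block of segments whose interiors lie in $\Sigma$ — such a block lies in a single connected component of $\Sigma$ — by the polygonal path through the centres of a finite chain of consecutively overlapping balls of that component joining the ball at one end of the block to the ball at the other; the geometric key is that two overlapping balls $B(c,\rho),B(c',\rho')$ satisfy $[c,c']\subset B(c,\rho)\cup B(c',\rho')$, which makes each such edge a good edge costing $0$. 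Finally arrange the exterior segments and the junctions so that every edge runs between two centres (except the first, starting at $0$, and the last, ending on $S(0,r)$) with $[x_{i-1},x_i]\cap\Sigma\subset B(x_{i-1},r(x_{i-1}))\cup B(x_i,r(x_i))$, inserting centres of any further balls met and using that an exterior portion of $\pi$ of length $\rho$ running from one ball to another can be replaced by an edge of $\tau$-cost at most $\rho$ (the triangle inequality makes the extra length fit inside the radii at its ends). The outcome is a good path $\pi^\ast$ with $\tau(\pi^\ast)\le\tau(\pi)$, whence, using Lemma~\ref{l:good}, $\inf_{\pi\text{ good}}\widetilde\tau(\pi)=\inf_{\pi\text{ good}}\tau(\pi)=\inf_\pi\tau(\pi)=T(0,S(0,r))$.

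\smallskip

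It remains to see this infimum is attained, whereupon a minimising good path is a good geodesic. The explicit formula $\widetilde\tau(\pi)=\sum_{i=1}^n(\|x_i-x_{i-1}\|-r(x_{i-1})-r(x_i))_+$ (valid since $[a,b]\setminus(B(a,\alpha)\cup B(b,\beta))$ has length $(\|a-b\|-\alpha-\beta)_+$) makes $\widetilde\tau$ depend on a good path only through its vertices and radii. The straight segment from $0$ to the nearest point of $S(0,r)$ has $\tau\le r$, so one may restrict to good paths with $\widetilde\tau\le r$; for each $\epsilon>0$ choose such a path $\pi_\epsilon$ with $\widetilde\tau(\pi_\epsilon)\le T(0,S(0,r))+\epsilon$ and the fewest edges, perform the standard surgery on it (removing redundant vertices and shortening excursions inside connected components of $\Sigma$), and invoke the finiteness property to conclude that the possible sequences of non-terminal vertices of these $\pi_\epsilon$ form an a.s.\ finite set. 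For each such sequence the infimum of $\widetilde\tau$ over the terminal vertices $y\in S(0,r)$ that keep the path good is attained (the admissible $y$ form a closed subset of the compact $S(0,r)$); as there are finitely many sequences, $\inf_{\pi\text{ good}}\widetilde\tau(\pi)=T(0,S(0,r))$ is attained, and any minimiser is a good geodesic. (Alternatively, one could invoke the existence of geodesics from \cite{Gouere-Theret-17} and apply the rerouting of the previous step.)

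\smallskip

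I expect the main obstacle to be the bookkeeping in the last two steps: carrying out the rerouting so that it satisfies both clauses of the definition of a good path without increasing $\tau$, and bounding the number of edges and localising the vertices of a fewest-edges near-geodesic. Both come down, in the end, to the finiteness property above and to the convexity of balls.
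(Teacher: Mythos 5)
Your overall strategy (reduce to the a.s.\ finitely many balls meeting a bounded region, build a competitor whose interior vertices are ball centres, then get attainment) is in the spirit of the paper's proof, but there is a genuine gap at the decisive step: the claim that your rerouting ``outcome is a good path $\pi^\ast$ with $\tau(\pi^\ast)\le\tau(\pi)$''. The zero-cost centre chains inside a component and the triangle-inequality bound $(\|c-c'\|-r(c)-r(c'))_+\le\|p-q\|$ control only the \emph{local} travel time $\widetilde\tau$ of the straightened edges; they do not give clause 2 of the definition of a good path, because a straightened edge $[c,c']$ may cross balls other than those centred at its endpoints (balls which need not even meet the original trace $\overline\pi$). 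Your proposed repair, ``inserting centres of any further balls met'', is not carried through: (i) after an insertion the cost comparison is no longer the simple triangle inequality and is not verified; (ii) a single large ball can cross several edges of the rerouted path while its centre can occur only once in a path (paths consist of distinct points), so insertion cannot always restore clause 2; (iii) the inserted edges may meet yet further balls, and termination of the procedure with $\tau$ still controlled is not addressed. The same unresolved step also undercuts your attainment argument, which needs near-minimising \emph{good} paths confined to a fixed finite vertex set.

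The missing idea, which is how the paper closes exactly this gap, is that goodness does not have to be built into the construction at all. The paper's Lemma \ref{l:good-geodesic-cas-fini} only produces, for the finite process $\xi^f$ of balls meeting $\overline{B(0,r)}$, a path $\pi^f$ with $\widetilde\tau(\pi^f;\xi^f)=T(0,S(0,r);\xi^f)$ (via the finite complete graph on connected components of $\Sigma^f$, with inter-component edges realised by closest-point pairs; attainment is then automatic, with no compactness or ``fewest edges'' argument needed). The chain $T(\cdot;\xi)=T(\cdot;\xi^f)=\widetilde\tau(\pi^f;\xi^f)\ge\widetilde\tau(\pi^f;\xi)\ge\tau(\pi^f;\xi)\ge T(\cdot;\xi)$ then forces $\widetilde\tau(\pi^f;\xi)=\tau(\pi^f;\xi)$, and goodness is deduced \emph{a posteriori}: on the a.s.\ event that no two balls are tangent and none is tangent to $S(0,r)$, any failure of clause 2 would make $\widetilde\tau$ strictly larger than $\tau$ (Lemma \ref{l:good} gives the inequality in general). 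Either adopt this ``goodness from $\widetilde\tau=\tau$ plus general position'' argument, or you must genuinely solve the combinatorial bookkeeping above (a global reorganisation, not edge-by-edge insertion); as written, the construction of a good competitor, and hence the identity $\inf_{\pi\text{ good}}\widetilde\tau(\pi)=T(0,S(0,r))$ and everything after it, is not justified.
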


To prove Lemma \ref{l:good-geodesic}, we need the following intermediate lemma.

\begin{lemma} \label{l:good-geodesic-cas-fini}
Let $A$ and $B$ be two compact subsets of $\R^d$.
Let $\xi^f$ be a finite subset of $\R^d \times (0,+\infty)$.
There exists a path $\pi$ from $A$ to $B$ such that $\widetilde\tau(\pi ; \xi^f)= T(A,B ; \xi^f)$.
\end{lemma}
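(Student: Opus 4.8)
The plan is to exploit the finiteness of $\xi^f$ to reduce the optimization over all paths from $A$ to $B$ to an optimization over a finite combinatorial structure, and then to extract a path realizing the infimum. Write $\xi^f = \{(c_1,\rho_1),\dots,(c_N,\rho_N)\}$ and $\Sigma^f = \bigcup_{j} B(c_j,\rho_j)$. First I would argue that there is no loss of generality in restricting to \emph{regular} paths whose intermediate vertices lie in $\{c_1,\dots,c_N\}$: given any path $\pi$ from $A$ to $B$, one can reroute it so that whenever it enters a ball $B(c_j,\rho_j)$ it is allowed to "jump" to the center $c_j$ and back out, and since travel inside $\Sigma^f$ is free (and the relevant $\widetilde\tau$-cost only charges the segments outside the $Z$-sets, which by construction sit inside $\Sigma^f$), this manipulation does not increase the cost. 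The key point is that $\widetilde\tau(\pi;\xi^f) \ge \tau(\pi;\xi^f) \ge T(A,B;\xi^f)$ always holds (the first inequality is Lemma \ref{l:good}), so it suffices to produce \emph{one} regular path $\pi$ with $\widetilde\tau(\pi;\xi^f) \le T(A,B;\xi^f)$.

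Next I would make the search space genuinely finite. For a regular path with intermediate vertices among the $N$ centers, and endpoints $a \in A$, $b \in B$, the cost $\widetilde\tau$ is a continuous function of $(a,b)$ once the (finite) sequence of centers is fixed, and there are only finitely many such sequences of distinct centers. For each fixed sequence of centers, minimizing $\widetilde\tau$ over $a\in A$, $b\in B$ is the minimization of a continuous function over a compact set, so the infimum is attained. Taking the best over the finitely many center-sequences yields a path $\pi^\star$ attaining $\min_\pi \widetilde\tau(\pi;\xi^f)$ over this restricted class. It then remains to check that this restricted minimum equals $T(A,B;\xi^f)$: the inequality $\ge$ is immediate from Lemma \ref{l:good}; for $\le$, given an arbitrary (not necessarily regular) path $\pi$ from $A$ to $B$ with $\tau(\pi;\xi^f)$ close to $T(A,B;\xi^f)$, I would replace each maximal excursion of $\pi$ inside some ball $B(c_j,\rho_j)$ by a detour through the center $c_j$, producing a regular path $\pi'$ with vertices among the centers and $\widetilde\tau(\pi';\xi^f) \le \tau(\pi;\xi^f)$; one must be slightly careful to handle the endpoints $a,b$ (which may themselves lie in $\Sigma^f$) and to ensure the produced path has distinct vertices, but these are routine adjustments.

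The main obstacle I anticipate is the bookkeeping in the rerouting construction: one must verify that replacing an excursion inside a ball by a visit to its center really does not increase $\widetilde\tau$, keeping in mind that $\widetilde\tau$ only discounts the specific sets $Z_i^\pm$ attached to consecutive collinear segments at a vertex, not every portion of the path lying in $\Sigma^f$. Concretely, when the path passes through $c_j$ along two segments $[x_{i-1},c_j]$ and $[c_j,x_{i+1}]$, the discounted sets $Z_i^-$ and $Z_i^+$ are exactly the intersections of those segments with $B(c_j,\rho_j)$, so the cost contributed near $c_j$ is the length of the two segments outside that ball — which is at most the original cost of traversing that region. Making this comparison precise, and checking it telescopes correctly along the whole path, is where the real work lies; everything else is compactness and a finite minimization.
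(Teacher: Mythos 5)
The step you yourself single out as ``where the real work lies'' is exactly where your plan has a genuine gap, and the rule you sketch (one detour through the centre $c_j$ for each maximal excursion of $\pi$ inside a single ball $B(c_j,\rho_j)$, with the local claim that the cost contributed near $c_j$ is at most the original cost of that region) is not correct as stated. The problem is that $\tau$ charges nothing for travel anywhere in the union $\Sigma^f$, while $\widetilde\tau$ only discounts, at an intermediate vertex $c_j$, the parts of the two segments incident to $c_j$ lying in that one ball. Take $\Sigma^f$ made of a large ball $B(c_0,r_0)$ and two small balls $B(c_1,r_1)$, $B(c_2,r_2)$ near opposite ends of it, each meeting $B(c_0,r_0)$ but with $B(c_1,r_1)\cap B(c_2,r_2)=\emptyset$, and a path that enters $\Sigma^f$ through the large ball, dips into $B(c_1,r_1)$, crosses the large ball, dips into $B(c_2,r_2)$ and exits: its $\tau$-cost on this stretch is $0$, but your rerouted path visits $c_0,c_1,c_2$ in this order and is charged $\ell\big([c_1,c_2]\setminus(B(c_1,r_1)\cup B(c_2,r_2))\big)>0$ across the middle of the large ball, because the discount attached to $c_0$ does not act on a segment not incident to $c_0$; loop-erasure cannot help since $c_0$ occurs only once. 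So the comparison does not telescope. The missing idea is precisely the one the paper's proof is organized around: within a connected component of $\Sigma^f$ any two balls are joined by a chain of pairwise intersecting balls, so centre-to-centre travel inside a component contributes $0$ to $\widetilde\tau$; hence the rerouting must be done per maximal excursion of $\pi$ inside a \emph{component} (centre of the entry ball, chain of centres, centre of the exit ball --- possibly skipping balls the path visited and using balls it never visited), and between two consecutive components, or between $A$ or $B$ and a component, one uses a segment $[c,c']$ between well-chosen centres whose charge is $\ell\big([c,c']\setminus(B(c,r)\cup B(c',r'))\big)=d(C,C')$. The paper packages all of this as the identity $T(A,B;\xi^f)=d_G(A,B)$ for the shortest-path distance on the finite graph whose vertices are the components of $\Sigma^f$ together with $A$ and $B$, plus an explicit path achieving $\widetilde\tau=d_G(A,B)$; with that ingredient supplied, your inequality $\widetilde\tau(\pi')\le\tau(\pi)$ does hold and the rest of your plan goes through.

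Two further remarks. The finite-minimization/compactness half of your plan is a legitimate alternative to the paper's explicit construction (the paper never minimizes over endpoint pairs, since its graph geodesic directly produces a path of the right cost), but it is not where the difficulty lies, and the endpoint ``routine adjustments'' are not entirely innocent: endpoints of a path never receive a discount ($Z_0^+=Z_n^-=\emptyset$), so if the optimum forces $a\in A$ to coincide with the first centre of the sequence, the infimum over genuine (distinct-point) paths in your restricted class need not be attained. Indeed, in the degenerate configuration $A=\{c\}$ with a single ball $(c,r)\in\xi^f$ and $B=\{b\}$ far away, every path from $A$ to $B$ satisfies $\widetilde\tau(\pi;\xi^f)=\ell(\pi)\ge\|c-b\|$ while $T(A,B;\xi^f)=\|c-b\|-r$, so such coincidences must be excluded or treated separately (the paper's construction quietly avoids them, and in the probabilistic applications they occur with probability zero). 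You should make this exclusion explicit when you perform the loop-erasure and the compactness step.
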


\begin{proof}
Let $\Sigma^f$ denote the union of all the balls $B(c,r)$ for $(c,r) \in \xi^f$.
Let $V$ be the set of connected components of $\Sigma^f$.
Define $V'$ as the union of $V$ and $\{A,B\}$.
The set $V'$ is finite.
We consider the complete graph whose vertices set is $V'$.
The length of an edge between any $C,C'$ in $V'$ is defined as the Euclidean distance $d(C,C')$ between $C$ and $C'$.
We consider the natural associated geodesic distance $d_G$ on the graph.
Let us prove the equality
\begin{equation}\label{e:geodesique-discretisation}
T(A,B; \xi^f) = d_G(A,B)
\end{equation}
and the existence of a path $\pi$ such that $\widetilde\tau(\pi ; \xi^f)= T(A,B ; \xi^f)$.

We first prove the inequality $T(A,B; \xi^f) \ge d_G(A,B)$.
Let $\pi$ be a path from $A$ to $B$.
We see $\pi$ as a curve $[0,\ell(\pi)] \to \R^d$ parametrized by arc-length.
Let $(C(1),\dots,C(n-1))$ be the finite sequence\footnote{The definition makes sense because of the following facts.
\begin{itemize}
 \item The set $V$ is finite.
 \item The sets $\pi^{-1}(C), C \in V$ are disjoint.
 \item For each $C \in V$, the set $\pi^{-1}(C)$ is the union of finite number of intervals.
\end{itemize}}
of elements of $V$ successively visited by $\pi$. 
Set $C(0)=A$ and $C(n)=B$.
Thus, $(C(0),\dots,C(n))$ is a sequence of elements of $V'$.
For any $i \in \{0,\dots,n-1\}$, some part (possibly empty) of $\pi$ goes from $C(i)$ to $C(i+1)$ without touching $\Sigma^f$.
The travel time of this part of $\pi$ is at least $d(C(i),C(i+1))$ and thus
\[
\tau(\pi ; \xi^f) \ge \sum_{i=0}^{n-1} d\big(C(i),C(i+1)\big) \ge d_G(A,B).
\]
Therefore 
\begin{equation}\label{e:gaman}
T(A,B; \xi^f) \ge d_G(A,B).
\end{equation}

We now build a path $\pi$ such that
\begin{equation}\label{e:manga}
\widetilde\tau(\pi;\xi^f) \le d_G(A,B).
\end{equation}
Let $(C_0=A,C(1),\dots,C(n)=B)$ be a sequence of distinct vertices of $V'$ such that
\[
d_G(A,B) = \sum_{i=0}^{n-1} d\big(C(i),C(i+1)\big).
\]
If $n=1$ we set $\pi=(a,b)$ where $a\in A$ and $b \in B$ are such that $\|b-a\| = d(A,B)$.
The conclusion is then straightforward.
Henceforth we assume $n \ge 2$.
The path $\pi$ is obtained by a natural concatenation of paths of the following kinds.
We refer to the figure \ref{f:fig2} for an example.
\begin{figure}[!ht]
\centering
\begingroup%
  \makeatletter%
  \providecommand\color[2][]{%
    \errmessage{(Inkscape) Color is used for the text in Inkscape, but the package 'color.sty' is not loaded}%
    \renewcommand\color[2][]{}%
  }%
  \providecommand\transparent[1]{%
    \errmessage{(Inkscape) Transparency is used (non-zero) for the text in Inkscape, but the package 'transparent.sty' is not loaded}%
    \renewcommand\transparent[1]{}%
  }%
  \providecommand\rotatebox[2]{#2}%
  \newcommand*\fsize{\dimexpr\f@size pt\relax}%
  \newcommand*\lineheight[1]{\fontsize{\fsize}{#1\fsize}\selectfont}%
  \ifx\svgwidth\undefined%
    \setlength{\unitlength}{337.78348079bp}%
    \ifx\svgscale\undefined%
      \relax%
    \else%
      \setlength{\unitlength}{\unitlength * \real{\svgscale}}%
    \fi%
  \else%
    \setlength{\unitlength}{\svgwidth}%
  \fi%
  \global\let\svgwidth\undefined%
  \global\let\svgscale\undefined%
  \makeatother%
  \begin{picture}(1,0.3595855)%
    \lineheight{1}%
    \setlength\tabcolsep{0pt}%
    \put(0,0){\includegraphics[width=\unitlength,page=1]{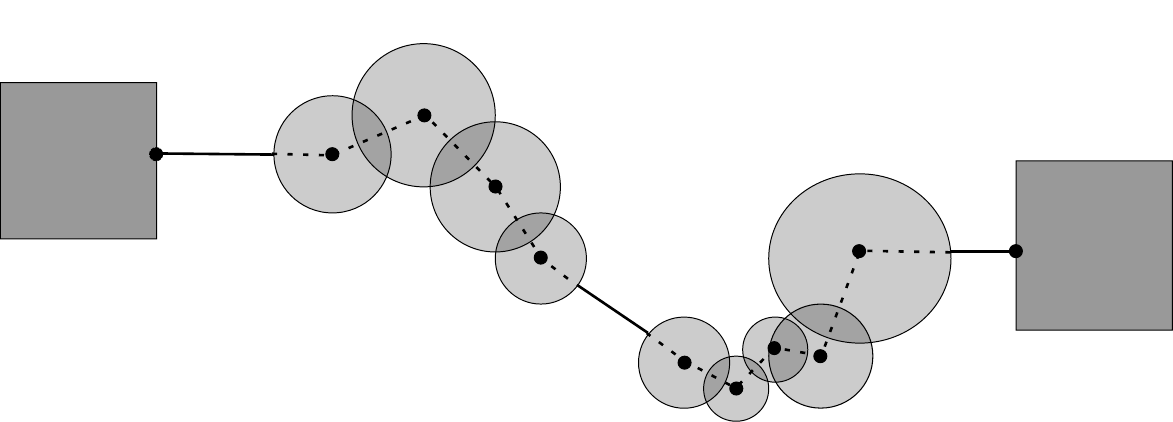}}%
    \put(0.03816567,0.29905762){\color[rgb]{0,0,0}\makebox(0,0)[lt]{\lineheight{1.25}\smash{\begin{tabular}[t]{l}$A$\end{tabular}}}}%
    \put(0.35123604,0.33680369){\color[rgb]{0,0,0}\makebox(0,0)[lt]{\lineheight{1.25}\smash{\begin{tabular}[t]{l}$\Sigma^f$\end{tabular}}}}%
    \put(0.90854577,0.23688762){\color[rgb]{0,0,0}\makebox(0,0)[lt]{\lineheight{1.25}\smash{\begin{tabular}[t]{l}$B$\end{tabular}}}}%
    \put(0.17582781,0.19470082){\color[rgb]{0,0,0}\makebox(0,0)[lt]{\lineheight{1.25}\smash{\begin{tabular}[t]{l}$\pi$\end{tabular}}}}%
  \end{picture}%
\endgroup%
\caption{Construction of the path $\pi$ from $A$ to $B$ - the travel time $\widetilde{\tau} (\pi, \xi^f)$ of $\pi$ is equal to the Euclidean length of the parts of $\pi$ that are in solid line.} 
\label{f:fig2}
\end{figure}
\begin{enumerate}
\item Let $C \in V$. This is a connected component of $\Sigma^f$.
We can write $C = \cup_{(c,r) \in \xi^C} B(c,r)$ where $\xi^C$ is a subset $\xi^f$.
For any distinct $(c,r), (c',r') \in \xi^C$ there exists a sequence $(c,r)=(c_0,r_0), \dots, (c_k,r_k)=(c',r')$ of elements of $\xi^C$ such that
$\widetilde\tau\big((c_0,\dots,c_k) , \xi^f\big)=0$.
Indeed, it suffices to consider a sequence in which each ball $B(c_i,r_i)$ touches the ball $B(c_{i+1},r_{i+1})$.
\item Let $C$ and $C'$ be two distinct elements of $V$.
As before, write $C = \cup_{(c,r) \in \xi^C} B(c,r)$ and $C' = \cup_{(c,r) \in \xi^{C'}} B(c,r)$ where $\xi^C, \xi^{C'} \subset \xi^f$.
There exists $(c,r) \in \xi^C$ and $(c',r') \in \xi^{C'}$ such that
\begin{equation}\label{e:ineq1}
\widetilde\tau\big((c,c') ; \xi^f\big) \le d(C,C').
\end{equation}
Indeed, there exists $x \in \overline{C}$ and $x' \in \overline{C'}$ such that $\|x-x'\| = d(C,C')$.
Then, there exists $(c,r) \in \xi^C$ and $(c',r') \in \xi^{C'}$ such that $x \in S(c,r)$ and $x' \in S(c',r')$.
Note that $x$ and $x'$ belong to the line segment $[c,c']$.
Actually, $[c,c'] \setminus \big( B(c,r) \cup B(c',r')\big) = [x,x']$ and therefore \eqref{e:ineq1} holds.
\item Let $C \in V$ and $K$ be either $A$ or $B$. 
As before, write $C = \cup_{(c,r) \in \xi^C} B(c,r)$ where $\xi^C \subset \xi^f$.
\begin{enumerate}
\item If $C \cap K$ is not empty, consider some $z \in C \cap K$. 
There exists $(c,r) \in \xi^C$ such that $z \in B(c,r)$.
If $c=z$ consider the path $\pi'=(c)$.
Otherwise consider the path $\pi'=(z,c)$.
In any case,
\[
\widetilde\tau\big(\pi';\xi^f\big) = 0 = d(K,C).
\]
\item If $C \cap K$ is empty there exists $(c,r) \in \xi^C$ and $z \in K$ such that 
\begin{equation}\label{e:ineq2}
\widetilde\tau\big((z,c);\xi^f\big)\le d(K,C).
\end{equation}
Let us prove it. 
There exists $z \in K$ and $x \in \overline{C}$ such that $d(K,C)=\|x-z\|$.
Then, as above, there exists $(c,r) \in \xi^C$ such that $x \in S(c,r)$ and $[z,c] \setminus B(c,r) = [z,x]$ 
(here we need $C \cap K = \emptyset$) and thus \eqref{e:ineq2}.
\end{enumerate}
\end{enumerate}
Concatenating paths, we get a path $\pi$ satisfying \eqref{e:manga}.

From \eqref{e:gaman}, \eqref{e:manga} and Lemma \ref{l:good} we get
\[
d_G(A,B) \le T(A,B ; \xi^f) \le \tau(\pi ; \xi^f) \le \widetilde\tau(\pi ; \xi^f) \le d_G(A,B).
\]
Therefore all inequalities are equalities and in particular $\widetilde\tau(\pi ; \xi^f) = T(A,B ; \xi^f)$.
\end{proof}

We can now prove the existence of a good geodesic from $0$ to $S(0,r)$ almost surely.

\begin{proof}[Proof of Lemma \ref{l:good-geodesic}] By definition,
\[
T(0,S(0,r)) = \inf_\pi \tau(\pi)
\]
where $\pi$ is the set of paths from $0$ to $S(0,r)$.
We can furthermore assume that the paths are entirely inside $\overline{B(0,r)}$\footnote{Let indeed $\pi$ be a path from $0$ to $S(0,r)$.
We see it as a parametrized curve.
We stop the path at its first intersection with $S(0,r)$.
We thus get a path $\pi'$ from $0$ to $S(0,r)$ which is entirely inside $\overline{B(0,r)}$ and such that $\tau(\pi') \le \tau(\pi)$.}.
But for any such path $\pi$, the travel time $\tau(\pi)$ only depends on random balls which touches $\overline{B(0,r)}$.
Thus $T(0,S(0,r) ; \xi) = T(0,S(0,r) ; \xi^f)$ where
\[
\xi^f = \{(c,s) \in \xi : B(c,s) \cap \overline{B(0,r)} \neq \emptyset\}.
\]
But as $\nu$ satisfies \eqref{e:momentd} the set $\xi^f$ is almost surely finite.
Therefore, by Lemma \ref{l:good-geodesic-cas-fini}, there exists a $\xi^f$-good geodesic $\pi^f$ from $0$ to $S(0,r)$.
As a consequence,
\begin{align*}
T(0,S(0,r) ; \xi) 
& = T(0,S(0,r) ; \xi^f) \\
& = \widetilde\tau(\pi^f ; \xi^f)  \text{ as } \pi^f \text{ is a } \xi^f \text{-good geodesic}\\
& \ge \widetilde\tau(\pi^f ; \xi) \text{ as } \xi^f \subset \xi \\
& \ge \tau(\pi^f ; \xi) \text{ by Lemma \ref{l:good}} \\
& \ge T(0,S(0,r) ; \xi) \text{ by definition of } T(0,S(0,r) ; \xi).
\end{align*}
Therefore all the inequalities are equalities.
From $\tau(\pi^f ; \xi) = T(0,S(0,r) ; \xi)$ we deduce that $\pi^f$ is a geodesic from $0$ to $S(0,r)$. The fact that $\pi^f$ is almost surely a $\xi$-good path is a consequence of the equality $\widetilde\tau(\pi^f ; \xi) = \tau(\pi^f ; \xi)$, and can be proved as follows.
On a probability one event, the random balls $B(c,s)$ with $(c,s)\in \xi^f$ are pairwise non tangent, and none of them is neither tangent to $S(0,r)$. 
 We work on this full probability event.
 Write $\pi^f=(x_0,\dots,x_n)$. Since $\pi^f$ is $\xi^f$-regular it is also $\xi$-regular. Suppose that $\pi^f$ is not $\xi$-good.
Then there exists $i \in \{1,\dots,n\}$ such that $[x_{i-1},x_i] \cap \left( Z^+_{i-1}(\pi) \cup Z^-_i(\pi) \right) \neq [x_{i-1},x_i] \cap \Sigma$.
As the first set is included in the second set, this implies that $\Sigma \cap I$ is non empty where
 $I = [x_{i-1},x_i] \setminus \left( Z^+_{i-1}(\pi) \cup Z^-_i(\pi) \right)$.
Therefore $I$ is non empty and, on our full probability event, this implies that $I$ is an interval of positive length.
As $\Sigma$ is open this implies that the length of $\Sigma \cap I$ is positive.
Therefore, $\widetilde\tau(\pi^f) > \tau(\pi^f)$, which is a contradiction. This concludes the proof.
\end{proof}

Another useful consequence of Lemma \ref{l:good-geodesic-cas-fini} is the following lemma.

\begin{lemma} \label{l:good-geodesic-pi}
Let $\nu$ be a finite measure on $(0,+\infty)$ satisfying the moment condition \eqref{e:momentd}.
Let $a, b \in \R^d$.
Let $\pi$ be a path from $a$ to $b$.
There exists a path $\widetilde\pi$ from $a$ to $b$ such that $\widetilde\tau(\widetilde\pi) \le \tau(\pi)$.
\end{lemma}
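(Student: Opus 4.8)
The plan is to follow exactly the scheme used for Lemma~\ref{l:good-geodesic}: replace $\xi$ by a finite sub-process on which Lemma~\ref{l:good-geodesic-cas-fini} applies, build $\widetilde\pi$ there, and then check that passing back to $\xi$ does not increase $\widetilde\tau$. First I would localize. Write $\pi=(x_0,\dots,x_m)$ with $x_0=a$ and $x_m=b$, and fix $\rho>0$ with $\pi\subset\overline{B(0,\rho)}$. We work on the almost sure event on which, for every integer $R$, only finitely many $(c,s)\in\xi$ satisfy $B(c,s)\cap\overline{B(0,R)}\neq\emptyset$; this event has probability one precisely because $\nu$ satisfies the moment condition~\eqref{e:momentd} (this is the only place~\eqref{e:momentd} is used, exactly as in Lemma~\ref{l:good-geodesic}). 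On this event set
\[
\xi^f=\{(c,s)\in\xi:\ B(c,s)\cap\overline{B(0,\rho)}\neq\emptyset\},
\]
which is a finite subset of $\R^d\times(0,+\infty)$, and let $\Sigma^f=\bigcup_{(c,s)\in\xi^f}B(c,s)$.

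Next I would observe that the reduction changes nothing for $\pi$ itself: since every segment $[x_{i-1},x_i]$ lies in $\overline{B(0,\rho)}$, any ball of $\xi$ meeting it already belongs to $\xi^f$, hence $[x_{i-1},x_i]\cap\Sigma=[x_{i-1},x_i]\cap\Sigma^f$ and therefore $\tau(\pi;\xi)=\tau(\pi;\xi^f)$. Now apply Lemma~\ref{l:good-geodesic-cas-fini} with $A=\{a\}$, $B=\{b\}$ and the finite set $\xi^f$: it yields a path $\widetilde\pi$ from $a$ to $b$ with $\widetilde\tau(\widetilde\pi;\xi^f)=T(a,b;\xi^f)$, and since $\pi$ is a path from $a$ to $b$ we have $T(a,b;\xi^f)\le\tau(\pi;\xi^f)$.

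It remains to compare $\widetilde\tau(\widetilde\pi;\xi^f)$ with $\widetilde\tau(\widetilde\pi;\xi)$. For every point $c\in\R^d$ one has $r(c;\xi^f)\le r(c;\xi)$: these radii agree when $c$ is the centre of a ball of $\xi^f$, and $r(c;\xi^f)=0\le r(c;\xi)$ otherwise. Consequently, for any path the sets $Z^+_{i-1}$ and $Z^-_i$ computed with $\xi^f$ are contained in those computed with $\xi$, so subtracting the larger sets can only shorten the remaining pieces; thus $\widetilde\tau(\widetilde\pi;\xi)\le\widetilde\tau(\widetilde\pi;\xi^f)$. Chaining the three estimates gives
\[
\widetilde\tau(\widetilde\pi;\xi)\ \le\ \widetilde\tau(\widetilde\pi;\xi^f)\ =\ T(a,b;\xi^f)\ \le\ \tau(\pi;\xi^f)\ =\ \tau(\pi;\xi),
\]
which is the desired inequality. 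There is no serious obstacle here once Lemma~\ref{l:good-geodesic-cas-fini} is granted; the only points needing care are getting the monotonicity of $\widetilde\tau$ in the driving process in the correct direction, and making the finiteness of $\xi^f$ rest on~\eqref{e:momentd}.
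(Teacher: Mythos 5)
Your proposal is correct and takes essentially the same route as the paper: localize to an almost surely finite sub-process $\xi^f$ (the paper uses the balls touching $\overline{\pi}$ rather than a ball $\overline{B(0,\rho)}$ containing $\pi$, an immaterial difference), apply Lemma~\ref{l:good-geodesic-cas-fini} to produce $\widetilde\pi$, and conclude via $T(a,b;\xi^f)\le\tau(\pi;\xi^f)=\tau(\pi;\xi)$ together with the monotonicity $\widetilde\tau(\widetilde\pi;\xi)\le\widetilde\tau(\widetilde\pi;\xi^f)$ for $\xi^f\subset\xi$, which you justify exactly as the paper implicitly does.
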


\begin{proof} We see $\pi$ as a curve parametrized by arc-length $\pi : [0,\ell(\pi)] \to \R^d$ and denote by $\overline{\pi}$ its image in $\R^d$.
This is a compact set.
The travel time $\tau(\pi)$ only depends on random balls which touches $\overline\pi$.
Thus $\tau(\pi ; \xi) = \tau(\pi ; \xi^f)$ where
\[
\xi^f = \{(c,r) \in \xi : B(c,r) \cap \overline\pi \neq \emptyset\}.
\]
But as $\nu$ satisfies \eqref{e:momentd} the set $\xi^f$ is almost surely finite.
Therefore, by Lemma \ref{l:good-geodesic-cas-fini}, there exists a $\xi^f$-good geodesic $\widetilde\pi$ from $a$ to $b$.
As a consequence,
\begin{align*}
\tau(\pi ; \xi) 
& = \tau(\pi ; \xi^f) \\
& \ge T(a,b ; \xi^f) \\
& = \widetilde\tau(\widetilde\pi ; \xi^f) \text{ as } \widetilde\pi \text{ is a }\xi^f\text{-good geodesic} \\
& \ge \widetilde\tau(\widetilde\pi ; \xi) \text{ as } \xi^f \subset \xi.
\end{align*}
The lemma is proven.
\end{proof}

We will need to study geodesics and travel time of geodesics.
Thanks to Lemma \ref{l:good-geodesic}, we can work with good geodesics.
For good geodesics, thanks to Lemma \ref{l:good}, the travel time $\tau$ is equal to the local travel time $\widetilde\tau$.
We can thus work with local travel times and local travel times are easier to handle.

\paragraph{$\rho$-skeleton associated with a path.}

Let $\pi$ be a path starting from $0$.
We see $\pi$ as a curve $[0,\ell(\pi)] \to \R^d$  parametrized by arc length.
We associate with $\pi$ a $\rho$-skeleton $\pi_\rho$ as follows. 
We first set $t_0=0 \in [0,\ell(\pi)]$ and $a_0=0 \in \R^d$ and then proceed by induction.
If $t_i$ and $a_i$ are defined for a given $i$, there are two cases:
 \begin{itemize}
  \item After time $t_i$, the path $\pi$ stays inside $B(a_i,\rho)$. In this case we set $k=k(\pi,\rho)=i$ and the construction is over.
  \item Otherwise, we denote by $t_{i+1}$ the first time after $t_i$ at which the path crosses 
  the sphere $S(a_i,\rho)$ of radius $\rho$ centered at $a_i$. 
  We set $a_{i+1}=\pi(t_{i+1})$ and the construction goes on.
 \end{itemize}
The $\rho$-skeleton $\pi_\rho$ is the path $(a_0,\dots,a_k)$.
Its length satisfies the inequality
\begin{equation} \label{e:length-skeleton}
\ell(\pi_\rho) = k\rho \ge \left\lfloor \frac{\|x_n\|}\rho \right\rfloor \rho \ge \|x_n\|-\rho.
\end{equation}

\subsection{Main result}
\label{s:main-controle-longueur}

\paragraph{Statement.}
The main result of Section \ref{s:controle-longueur} is the following one.

 \begin{theorem} \label{t:controle-l}
Let $\hat\nu$ be a finite measure on $(0,+\infty)$ satisfying the greedy condition \eqref{e:greedy}.
Assume $\mu(\hat\nu)>0$.
There exists $C=C(d,\hat\nu)$ such that the following holds.
Let $\nu \preceq \hat\nu$ be a measure on $(0,+\infty)$.
Let $\xi$ be a Poisson point process driven by $\nu$.
Then, almost surely,
\[
\limsup_{s \to \infty} \left[\sup\left\{\frac{\ell(\pi)}{s}, \; \pi \text{ is a } \xi \text{-good geodesic from } 0 \text{ to } S(0,s)\right\} \right]
\le 
C.
\]
\end{theorem}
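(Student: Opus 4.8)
}
I would start with three reductions. First, by Lemma~\ref{l:good-geodesic} it suffices to bound $\ell(\pi)/s$ for $\xi$-good geodesics. Second, since $\nu\preceq\hat\nu$ we have $\cR_\nu\le\cR_{\hat\nu}$, so I can realize $\xi$ together with a point process $\hat\xi$ driven by $\hat\nu$ on one probability space with $\chi\subseteq\hat\chi$ and $r(c)\le\hat r(c)$ for every $c\in\R^d$; then $\Sigma\subseteq\hat\Sigma$, $T(a,b;\xi)\ge T(a,b;\hat\xi)$ for all $a,b$, hence $\mu(\nu)\ge\mu(\hat\nu)>0$, and the greedy condition and strong subcriticality pass from $\hat\nu$ to $\nu$ (Theorem~\ref{t:gt17}); all the uniformity needed below can thus be extracted from the single model $\hat\xi$. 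Third, if $\pi=(x_0,\dots,x_n)$ is a good path then, segment by segment, using that the portion of $[x_{i-1},x_i]$ inside $\Sigma$ lies in $B(x_{i-1},r(x_{i-1}))\cup B(x_i,r(x_i))$ and that a sub-segment of $[x_{i-1},x_i]$ ending at a centre $x_j$ has length at most $r(x_j)$,
\[
\ell(\pi)=\tau(\pi)+\sum_{i=1}^{n}\ell\left([x_{i-1},x_i]\cap\Sigma\right)\le\tau(\pi)+2\sum_{i=1}^{n-1}r(x_i).
\]
Since for a geodesic $\tau(\pi)=T(0,S(0,s))\le s$, the theorem reduces to: almost surely, uniformly over $\nu\preceq\hat\nu$ and over $\xi$-good geodesics $\pi$ from $0$ to $S(0,s)$, one has $\sum_{i=1}^{n-1}r(x_i)\le Cs$ for all large $s$, with $C=C(d,\hat\nu)$.

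To bound the collected mass $\sum_i r(x_i)$ I would bring in the $\rho$-skeleton $\pi_\rho=(a_0,\dots,a_k)$ of $\pi$, for a large $\rho=\rho(\hat\nu)$ to be fixed, and a tiling of $\R^d$ by cubes of side $\rho$. Since the piece of $\pi$ between two consecutive skeleton times stays in a ball of radius $\rho$, we have $\overline\pi\subseteq\bigcup_{j=0}^{k}\overline{B(a_j,\rho)}$; in particular every vertex $x_i$ lies in one of these balls, the set $\Gamma$ of cubes met by $\pi$ is lattice-connected, contains the cube of the origin, and satisfies $|\Gamma|\le C_d(k+1)$, and $\sum_i r(x_i)\le\sum_{Q\in\Gamma}\sum_{c\in\hat\chi\cap Q}\hat r(c)$. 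Because the greedy condition on $\hat\nu$ passes to the cube field $Q\mapsto\sum_{c\in\hat\chi\cap Q}\hat r(c)$, the greedy lattice animal estimates of Section~\ref{s:skeleton} (Corollary~\ref{c:greedy}) give $\sum_{Q\in\Gamma}\sum_{c\in\hat\chi\cap Q}\hat r(c)\le C'|\Gamma|$ almost surely once $|\Gamma|$ is large, with $C'=C'(d,\hat\nu)$. Everything thus comes down to the bound $k\le Cs/\rho$.

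For this, write $\tau(\pi)$ as the sum of the travel times of the skeleton pieces; the $j$-th piece is a path from $a_{j-1}$ to $S(a_{j-1},\rho)$, so $\tau(\pi)\ge\sum_{j=1}^{k}T(a_{j-1},S(a_{j-1},\rho);\hat\xi)$, and since $\tau(\pi)\le s$ the indices $j$ with $T(a_{j-1},S(a_{j-1},\rho);\hat\xi)\ge\mu(\hat\nu)\rho/2$ number at most $2s/(\mu(\hat\nu)\rho)$. The remaining, ``bad'', indices are controlled by a renormalization at scale $\rho$: a block is called bad when $\hat\Sigma$ does something large in or near it (e.g.\ a component of diameter $\ge\rho$, or an abnormally large collected mass), an event whose probability is comparable to $\P[T(0,S(0,\rho);\hat\xi)<\mu(\hat\nu)\rho/2]$ and so tends to $0$ as $\rho\to\infty$ by the direction-uniform convergence $T(0,S(0,\rho);\hat\xi)/\rho\to\mu(\hat\nu)$ noted after Theorem~\ref{t:timeconstant} --- this is precisely where $\mu(\hat\nu)>0$ is used --- with unbounded radii truncated and handled through the greedy condition. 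Taking $\rho$ large enough that the bad blocks are dominated by a subcritical Bernoulli percolation, a percolation/geometric argument bounds the bad part of the skeleton of any good geodesic, yielding $k\le Cs/\rho$; combined with the previous paragraph this gives $\sum_i r(x_i)\le Cs$ and therefore $\ell(\pi)\le(1+2C)s$ for all large $s$.

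The main obstacle is this last control on $k$: a geodesic pays nothing for travelling inside a connected component of $\Sigma$, so only the uniform smallness and sparseness of these components in the subcritical regime prevents a good geodesic from being arbitrarily long, and making this quantitative and uniform --- over all good geodesics, over all $\nu\preceq\hat\nu$, and stably as $s\to\infty$, while simultaneously taming unbounded radii so that the renormalization really dominates a subcritical percolation --- is the heart of the proof; it is exactly why both the greedy condition on $\hat\nu$ and the positivity $\mu(\hat\nu)>0$ are needed in the hypotheses.
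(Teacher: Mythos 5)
Your outer reduction is sound and even elegant: for a $\xi$-good geodesic, $\ell(\pi)=\tau(\pi)+\sum_i\ell([x_{i-1},x_i]\cap\Sigma)\le s+2\sum_i r(x_i)$, and once one knows $k=\ell(\pi_\rho)/\rho\le Cs/\rho$, the radii collected along $\pi$ can indeed be bounded by a lattice-animal estimate at scale $\rho$ (though Corollary~\ref{c:greedy} does not say this: it bounds the continuum greedy functional $r(\pi)/\ell(\pi)$, not sums of cube weights over animals; you would need to state and prove, via Martin's lattice result, that the greedy condition on $\hat\nu$ transfers to the compound-Poisson cube field $W_Q=\sum_{c\in\hat\chi\cap Q}\hat r(c)$). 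The genuine gap is the step you relegate to a ``percolation/geometric argument'': the bound $k\le Cs/\rho$, i.e.\ $\limsup_s \ell(\pi_\rho)/s\le C$ uniformly over good geodesics and over $\nu\preceq\hat\nu$. This is the heart of the theorem (it is the content of Proposition~\ref{p:controle-ltaurho} and Theorem~\ref{t:controle-ltaurho:simple}), and your renormalization sketch does not deliver it. First, the geodesic is adaptive: it seeks out regions where crossings are cheap, so the fact that a \emph{fixed} block is bad with probability $o(1)$ as $\rho\to\infty$ does not bound the proportion of bad indices along the skeleton of a geodesic; one needs an estimate uniform over \emph{all} possible $\rho$-skeletons, which the paper obtains by a union bound over $C^{k-1}$ discretized skeletons combined with an exponential bound on $\P[\sum_j T(a_{j-1},a_j)<k\rho(\mu-\epsilon)]$. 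Second, that exponential bound cannot be obtained by treating the $k$ crossing times as independent or by a naive block domination: they are functions of one Poisson process, and a single large ball can make many of them simultaneously small. The paper decouples them through the disjoint-occurrence quantity $T^\square$ and a BK inequality for Poisson processes (Lemma~\ref{l:bk}), after first removing balls of radius $>\delta\rho$ and feeding the resulting deficit back along the whole skeleton via the enhanced skeleton and the greedy estimate of Corollary~\ref{c:greedy} (Proposition~\ref{p:reduction}). Your sentence ``unbounded radii truncated and handled through the greedy condition'' names this difficulty but supplies no mechanism; in particular, truncation destroys the lower bound on crossing times in the true model, and nothing in your sketch prevents a geodesic from making arbitrarily many zero-cost skeleton steps inside one large component of $\Sigma$ (such steps are ``bad'' and their number is not controlled by the mere sparsity or subcritical domination of bad blocks).

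For comparison, the paper factorizes $\ell(\pi)/s$ as $\big(\ell(\pi)/\ell(\pi_\rho)\big)\cdot\big(\ell(\pi_\rho)/\tau(\pi)\big)\cdot\big(\tau(\pi)/s\big)$: the first factor is bounded by counting Poisson points near the skeleton (Proposition~\ref{p:controlellrho}, only Poisson concentration plus animal counting, no radii), the last is $\le\mu(\nu)\le 1$ by Theorem~\ref{t:timeconstant}, and the middle one is the hard part described above (Propositions~\ref{p:reduction} and~\ref{p:travel-time-skeleton}). Your alternative first step ($\ell(\pi)-\tau(\pi)\le 2\sum_i r(x_i)$ plus a lattice-animal bound on radii) is a viable variant of the easy part, but the proposal leaves the hard part essentially unproved; note also that you cannot close it circularly, since the animal bound gives $\sum_i r(x_i)\lesssim \rho^d k$, which is far larger than $k\rho$, so an independent proof of $k\lesssim s/\rho$ of the paper's type (or a fully worked-out renormalization with BK-type decoupling and greedy control of large balls and large clusters) is indispensable.
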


\paragraph{Remark on the hypothesis $\mu (\tilde \nu) >0$.}
We want to enlighten the fact that the condition $\mu (\tilde \nu) >0$ appearing in Theorem \ref{t:controle-l} is quite natural. For that purpose, let us say a few words about the classical model of first-passage percolation on $\mathbb{Z}^d$. In this model, a time constant $\mu$ can also be defined by subadditivity (see for instance \cite{AuffingerDamronHanson} for a review on the subject), and it is known that $\mu>0$ if and only if $F(\{0\}) < p_c(d)$, where $F$ denotes the distribution of the passage times associated with the edges of $\mathbb{Z}^d$, and $p_c(d)$ is the critical parameter of i.i.d. bond Bernoulli percolation on $\mathbb{Z}^d$. The length $L_n$ of a geodesic between $0$ and $n e_1 = (n, 0,\dots, 0)$ in this model have been studied separately in the three following cases:
\begin{itemize}
\item $F(\{0\}) < p_c(d)$, {\em i.e.}, $\mu >0$ :  in this case, it has been proved by Kesten in \cite{Kesten-saint-flour} that $L_n$ is a most of order $C n$ for some constant $C=C(d,F)$. 
\item $F(\{0\}) > p_c(d)$ : then $\mu=0$, and some subadditivity can be recovered for a variant of $L_n$. This allowed Zhang and Zhang \cite {ZhangZhang} to prove that $L_n / n$ converges a.s. and in $L^1$ to some constant $C'=C'(d,F(\{0\})$ when $n$ goes to infinity in this setting (see also \cite{ZHANGsurcritique} for an improvement of the previous result).
\item $F(\{0\}) = p_c(d)$ : then $\mu =0$ also but it is believed that $L_n$ is superlinear in $n$. Only partial results of this type are rigorously proved, see for instance Damron and Tang's paper \cite{DamronTang} that proves indeed the superlinearity of $L_n$ in $n$ in dimension $2$.
\end{itemize}
We also refer to the paper by Bates \cite{bates2020empirical} for recent developments on the subject. Let us go back to the continuous model of first-passage percolation we are studying here. Theorem \ref{t:controle-l} requires the same kind of hypothesis as Kesten's result, {\it i.e.}, the underlying Boolean model must be subcritical. Kesten's proof is indeed a source of inspiration we widely use here. The study of the supercritical case ({\it i.e.}, when the underlying Boolean model is supercritical) should be accessible but quite different, whereas the study of the critical case is expected to be much more delicate.

\paragraph{Plan of the proof.} For some $\rho>0$, write
\[
\frac{\ell(\pi)}{s} = \frac{\ell(\pi)}{\ell(\pi_\rho)} \frac{\ell(\pi_\rho)}{\tau(\pi)} \frac{\tau(\pi)} s
\]
where $\pi_\rho$ is the $\rho$-skeleton of $\pi$.
We prove an upper bound for each of the three factors.

\begin{itemize}
\item Upper bound for $\ell(\pi)/\ell(\pi_\rho)$.
In Proposition \ref{p:controlellrho} we give a crude upper bound of the factor $\ell(\pi)/\ell(\pi_\rho)$.
The idea is the following.
Let $\pi_\rho=(a_0,\dots,a_k)$ be the $\rho$-skeleton of a path $\pi$.
Note that the distance between any two points of any ball or radius $\rho$ is at most $2\rho$.
Therefore, we can check that the length of $\pi$ is at most 
\[
2\rho N(a_0,\dots,a_k)+2(k+1)\rho 
\]
where $N(a_0,\dots,a_k)$ is the number of points of $\chi \cap \cup_i B(a_i,\rho)$.
As $\ell(\pi_\rho)=k\rho$, we have
\[
\frac{\ell(\pi)}{\ell(\pi_\rho)} \le 2\frac{k+1}k + 2\frac {N(a_0,\dots,a_k)} k
\]
and it remains to control the behavior of the supremum of $N(a_0,\dots,a_k)/k$ over all $\rho-$skeleton $(a_0,\dots,a_k)$ when $k$ tends to infinity.
\item Upper bound on $\ell(\pi_\rho)/\tau(\pi)$. Equivalently, we need to provide a good lower bound for its inverse
\[
\frac{\tau(\pi)}{\ell(\pi_\rho)}.
\]
This is the core of the proof. This is achieved in Proposition \ref{p:tau-ltaurho}.
Recall that the points $a_i$ of the skeleton $\pi_\rho=(a_0,\dots,a_k)$ are points of $\pi$.
We can thus think of $\pi$ as a union of subpaths: 
$\gamma^0$ the subpath of $\pi$ from $a_0$ to $a_1$, 
$\gamma^1$ the subpath of $\pi$ from $a_1$ to $a_2$ and so on.
As moreover $\ell(\pi_\rho)=k\rho$, we can write
\[
\frac{\tau(\pi)}{\ell(\pi_\rho)}  = \frac{\sum_{i=0}^k \tau(\gamma^i)}{k\rho} = \frac  1 k \sum_{i=0}^{k} \frac{\tau(\gamma^i)}{\rho}.
\]
But each $\gamma^i$ (except $\gamma^k$) is a path between two points at distance $\rho$ from each other.
Therefore one can hope to prove that $\tau(\gamma^i)/\rho$ is roughly at least the time constant $\mu$.
The basic plan to prove such a result is to use BK inequality.
However, one can only use BK inequality if the $\tau(\gamma^i)$ use distinct balls of the Boolean model.
But this is not the case for several reasons, the most important one being the existence of very large balls which touch several $\gamma^i$.
We thus have to deal with large balls before being able to use BK inequality.
This is achieved by bounding the influence of large balls by some greedy paths estimates.
This reduction of Proposition \ref{p:tau-ltaurho} to BK inequality and greedy paths estimates is performed in Proposition \ref{p:reduction}.

\item Upper bound on $\tau(\pi)/s$. This is the easiest part. 
Recall that $\pi$ is a geodesic from $0$ to $S(0,s)$.
Therefore $\tau(\pi)/s = T(0,S(0,s))/s$ and thus converges to the time constant $\mu$ by Theorem \ref{t:timeconstant}.
\end{itemize}

\paragraph{A by product of the proof: control of the length of the skeleton of a good geodesic.}
As a consequence of Items 2 and 3 of the plan, we obtain the following result which we believe is of independent interest.

\begin{theorem} \label{t:controle-ltaurho:simple}
Let $\nu$ be a finite measure on $(0,+\infty)$ satisfying the greedy condition \eqref{e:greedy}.
Let $\xi$ be a Poisson point process driven by $\nu$. 
Assume $\mu(\nu)>0$.
For all $\epsilon>0$ there exists $\rho_0=\rho_0(\nu, \epsilon,d)$ such that, for all $\rho \ge \rho_0$, almost surely,
\[
\limsup_{s \to \infty} \left[\sup\left\{\frac{\ell(\pi_\rho)}{s}, \; \pi \text{ is a good geodesic from } 0 \text{ to } S(0,s)\right\} \right]\le 
1+\epsilon.
\]
\end{theorem}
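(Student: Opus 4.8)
The plan is to combine two facts via the identity $\ell(\pi_\rho)/s=\big(\ell(\pi_\rho)/\tau(\pi)\big)\cdot\big(\tau(\pi)/s\big)$ — the two middle factors of the three-factor decomposition set up for the proof of Theorem~\ref{t:controle-l} — and to control each factor uniformly over good geodesics $\pi$ from $0$ to $S(0,s)$. First I would note that, by Lemma~\ref{l:good-geodesic}, such a geodesic exists almost surely, and that since $\pi$ is a geodesic we have $\tau(\pi)=T(0,S(0,s))=T(0,B(0,s)^c)$; the last equality holds because a path from $0$ to $B(0,s)^c$ may be stopped at its first visit of $S(0,s)$ without increasing its travel time. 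In particular $\tau(\pi)$ is the same for every good geodesic $\pi$, so for each $s>0$
\[
\sup_{\pi}\frac{\ell(\pi_\rho)}{s}=\Big(\sup_{\pi}\frac{\ell(\pi_\rho)}{\tau(\pi)}\Big)\cdot\frac{T(0,B(0,s)^c)}{s},
\]
the supremum running over good geodesics from $0$ to $S(0,s)$ and the second factor leaving the supremum because it does not depend on $\pi$.

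For the factor $T(0,B(0,s)^c)/s$ I would simply invoke Theorem~\ref{t:timeconstant} and the remark following it: $T(0,B(0,s)^c)/s\to\mu(\nu)$ almost surely, so its $\limsup$ as $s\to\infty$ equals $\mu(\nu)$, which is finite and, by hypothesis, positive. For the factor $\sup_\pi \ell(\pi_\rho)/\tau(\pi)$ I would invoke the core estimate of Section~\ref{s:controle-longueur}, namely Proposition~\ref{p:tau-ltaurho} (Item~2 of the plan of the proof of Theorem~\ref{t:controle-l}): for every $\delta>0$ there is $\rho_0(\nu,\delta,d)$ such that, for all $\rho\ge\rho_0$, almost surely
\[
\liminf_{s\to\infty}\ \inf_{\pi}\frac{\tau(\pi)}{\ell(\pi_\rho)}\ \ge\ (1-\delta)\,\mu(\nu),
\]
which is the same as $\limsup_{s\to\infty}\sup_{\pi}\ell(\pi_\rho)/\tau(\pi)\le\big((1-\delta)\mu(\nu)\big)^{-1}$.

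Multiplying these two bounds, using that $\limsup$ is submultiplicative for nonnegative sequences, gives $\limsup_{s\to\infty}\sup_{\pi}\ell(\pi_\rho)/s\le 1/(1-\delta)$. Given $\epsilon>0$ it then suffices to choose $\delta$ with $\delta\le\epsilon/(1+\epsilon)$, so that $1/(1-\delta)\le 1+\epsilon$, and to set $\rho_0(\nu,\epsilon,d):=\rho_0(\nu,\delta,d)$. In this argument everything is routine bookkeeping once Proposition~\ref{p:tau-ltaurho} is available; the genuine difficulty is concentrated there, since its proof must first neutralize the large balls of the Boolean model through greedy-path estimates (via Proposition~\ref{p:reduction}) before a BK-type argument can compare, segment by segment along the skeleton decomposition of $\pi$, each subpath $\gamma^i$ to the time constant.
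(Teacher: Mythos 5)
Your argument is correct and is essentially the paper's own proof: the paper deduces Theorem \ref{t:controle-ltaurho:simple} as the case $\nu=\hat\nu$ of Proposition \ref{p:controle-ltaurho}, whose proof is exactly your combination of Proposition \ref{p:tau-ltaurho} (lower bound on $\tau(\pi)/\ell(\pi_\rho)$, valid for good geodesics since they are good paths from $0$ to $B(0,s)^c$) with Theorem \ref{t:timeconstant} for $T(0,S(0,s))/s\to\mu(\nu)$, the only difference being cosmetic bookkeeping of the $\epsilon$'s (the paper uses the threshold $\mu/(1+\epsilon\mu)$ where you use $(1-\delta)\mu$).
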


For any geodesic $\pi$ from $0$ to $S(0,s)$,
\[
\frac{\ell(\pi_\rho)}{s} \ge \frac{s-\rho}s = 1 - \frac{\rho}s.
\]
Therefore, by Theorem \ref{t:controle-ltaurho:simple}, for all $\rho$ large enough, almost surely, for all $s$ large enough, 
$\ell(\pi_\rho)/s$ is close to $1$ for any good geodesic $\pi$ from $0$ to $S(0,s)$.
Note that this is stronger that simply saying that $\pi$ is close to a segment after normalization by the distance between its extremities.

The result is not surprising.
Here is a heuristic (we rephrase Item 2 and Item 3 of the plan above).
Let $\pi$ be a geodesic from $0$ to $S(0,s)$.
Denote by $(a_0,\dots,a_k)$ its $\rho$-skeleton and decompose accordingly $\pi$ in subpaths $\pi^0, \dots, \pi^k$ as in the second item of the plan.
Then one can expect
\begin{align}
\mu s 
 & \approx \tau(\pi) \text{ because } \pi \text{ is a geodesic between } 0 \text{ and } S(0,s) \nonumber \\
 & = \sum_{i=0}^k \tau(\pi^i) \nonumber \\
 & = \sum_{i=0}^{k-1} T(a_i,a_{i+1}) + \tau(\pi^k) \text{ because each } \tau(\pi^i) \text{ is a geodesic}\nonumber  \\
 & \approx \sum_{i=0}^{k-1} \rho \mu \text{ as } \rho \text{ is large} \label{e:diff} \\
 & = \mu \ell(\pi_\rho). \nonumber
\end{align}
From $\mu s \approx \mu \ell(\pi_\rho)$ we get $s \approx \ell(\pi_\rho)$.
The difficulty is in \eqref{e:diff} and is mainly due to large balls which induce long range dependence, as explained with more details 
in the plan of the proof of Theorem \ref{t:controle-l}

\paragraph{Organization of the remaining of Section \ref{s:controle-longueur}.}
Section \ref{s:greedy} is devoted to greedy paths.
In Section \ref{s:p:controlellrho} we prove Proposition \ref{p:controlellrho}. This is Item 1 of the plan of the proof of Theorem \ref{t:controle-l}.
In Section \ref{s:reduction} we prove Proposition \ref{p:reduction}. This is the first step of Item 2 of the plan. 
We relate $\tau(\pi)/\ell(\pi_\rho)$ to a quantity $T^\square$ amenable to the use of BK inequality and to a quantity related to greedy paths.
In Section \ref{s:Tsquare} we study $T^\square$.
In Section \ref{s:tau-ltaurho} we prove Proposition \ref{p:tau-ltaurho} which gives a lower bound on $\tau(\pi)/\ell(\pi_\rho)$. 
This ends Item 2 of the plan.
In Section \ref{s:preuve:t:controle-l} we finally prove Theorem \ref{t:controle-l} and Theorem \ref{t:controle-ltaurho:simple}.

\subsection{Greedy paths}
\label{s:greedy}

Let $\nu$ be a finite measure on $(0,+\infty)$. 
Let $\xi$ be a Poisson point process on $\R^d \times (0,+\infty)$ with intensity measure $dc \times \nu(dr)$.
As before, we write
\[
\xi = \{(c,r(c)), c \in \chi\}.
\]
If $\pi=(x_0,\dots,x_n)$ is a path we set
\[
r(\pi) = r(\pi;\xi) = \sum_{i=0}^n r(x_i) \text{ and } \ell(\pi) = \sum_{i=1}^{n} \|x_i-x_{i-1}\|.
\]
We also set
\[
G = G(\xi) = \sup_{\pi : 0 \to *} \frac{r(\pi)}{\ell(\pi)} 
\]
where the supremum is taken over all paths $\pi=(x_0,\dots,x_n), n \ge 1$, such that $x_0=0$.
If moreover $s>0$, we write
\[
G(s)  = G(s;\xi) = \sup_{\pi : 0 \to *, \pi \not\subset B_o(0,s)} \frac{r(\pi)}{\ell(\pi)}
\]
where the supremum is taken over all paths $\pi=(x_0,\dots,x_n), n \ge 1$, such that $x_0=0$ and at least one of the $x_i$ is outside 
the open ball $B_o(0,s)$
As $G$ is non-increasing in $s$, we can define
\begin{equation}\label{e:ginfty}
G(\infty) = G(\infty;\xi) = \lim_{s \to \infty} G(s).
\end{equation}

\begin{theorem}[\cite{GM-deijfen}] \label{t:greedy} Let $\nu$ be a finite measure on $(0,+\infty)$.  There exists a constant $C=C(d)$ such that
\[
\E(G) \le C \int_{(0,+\infty)} \nu\big([r,+\infty)\big)^{1/d} dr.
\]
\end{theorem}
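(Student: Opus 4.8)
The plan is to reduce the estimate to a single scale of radii by a dyadic decomposition, to normalise the centre intensity on each scale by a dilation, and then to reduce the resulting uniform bound to a discrete computation controlled by Poisson concentration and by a count of lattice animals. So first I would decompose $\nu=\sum_{j\in\Z}\nu_j$, where $\nu_j$ is the restriction of $\nu$ to $[2^j,2^{j+1})$, and split $\xi$ accordingly into independent Poisson point processes $\xi_j$ driven by the $\nu_j$. For every path $\pi$ one has $r(\pi;\xi)=\sum_j r(\pi;\xi_j)$ while $\ell(\pi)$ does not depend on $j$, so dividing by $\ell(\pi)$ and taking suprema gives the subadditivity $G(\xi)\le\sum_j G(\xi_j)$. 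It is therefore enough to prove a bound of the form $\E[G(\xi_j)]\le C(d)\,2^j\lambda_j^{1/d}$, with $\lambda_j:=\nu_j((0,+\infty))$: summing over $j$ and comparing $2^{j-1}\lambda_j^{1/d}$ with $\int_{2^{j-1}}^{2^j}\nu([r,+\infty))^{1/d}\,\d r$ gives $\sum_j 2^j\lambda_j^{1/d}\le 2\int_0^\infty\nu([r,+\infty))^{1/d}\,\d r$, hence the theorem. (If this last integral is infinite there is nothing to prove.)

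Next I would use that $G$ is invariant under the dilation $(c,r)\mapsto(sc,sr)$ of $\R^d\times(0,+\infty)$, since this multiplies both $r(\pi)$ and $\ell(\pi)$ by $s$. Taking $s=\lambda_j^{1/d}$ turns $\xi_j$ into a Poisson point process whose centre process has intensity $1$ and whose radii all lie in $(0,t_j]$, with $t_j:=2^{j+1}\lambda_j^{1/d}$, and leaves $G$ unchanged. So the per-scale bound reduces to the following uniform statement: if $\eta$ is a Poisson point process on $\R^d\times(0,+\infty)$ with intensity $\d c\otimes\nu'$, where $\nu'$ has total mass $1$ and support in $(0,t]$, and $\chi$ denotes its projection on $\R^d$, then $\E[G(\eta)]\le C(d)\,t$.

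For this last step I would discretise. Tile $\R^d$ by the unit cubes indexed by $\Z^d$; for a path $\pi$ from $0$, let $V_\pi$ be the union of the unit cubes meeting its trace and $m_\pi$ their number, so that $V_\pi$ is lattice-connected, contains the cube of the origin, $m_\pi\le C_d(\ell(\pi)+2)$ (cover the trace by $O(\ell(\pi))$ unit balls), and $r(\pi)\le t\,\lvert\chi\cap V_\pi\rvert$ since the radii are $\le t$ and the vertices of $\pi$ lying in $\chi$ are distinct points of $V_\pi$. Now split the supremum defining $G(\eta)$ according to whether $m_\pi\le m_0$ or $m_\pi>m_0$, for a dimensional threshold $m_0$ large enough that $m_\pi>m_0$ forces $\ell(\pi)\ge m_\pi/(2C_d)$. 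When $m_\pi\le m_0$, the trace of $\pi$, hence every point of $\chi$ it visits, lies in a fixed ball $B(0,\rho)$ with $\rho=\rho(d)$, while $\ell(\pi)$ is at least the distance from $0$ to any vertex of $\pi$, so $\ell(\pi)\ge d(0,\chi)$ whenever $\pi$ meets $\chi$; hence this part is at most $t\,\lvert\chi\cap B(0,\rho)\rvert/d(0,\chi)$, whose expectation is $C(d)\,t$ because the nearest-point distance of a unit-intensity Poisson process has a density comparable to $r^{d-1}$ near $0$, which is integrable against $1/r$ for $d\ge2$. When $m_\pi>m_0$, this part is at most $C(d)\,t\,\sup_{m>m_0}M_m/m$, where $M_m$ is the maximum of $\lvert\chi\cap U\rvert$ over the lattice-connected unions $U$ of $m$ unit cubes containing the cube of $0$; bounding the number of such $U$ by $C_1^m$ (the number of lattice animals of size $m$ through a fixed cube) and using $\P[\mathrm{Poisson}(m)\ge am]\le e^{-m(a\ln a-a+1)}$, a union bound over $U$ and $m$ gives $\P[\sup_{m>m_0}M_m/m>a]\le\sum_{m>m_0}C_1^m e^{-m(a\ln a-a+1)}$, which for $a$ beyond a dimensional constant is a convergent series tending to $0$ as $a\to\infty$; integrating this tail bound over $a$ yields $\E[\sup_{m>m_0}M_m/m]\le C(d)$. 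Adding the two contributions gives $\E[G(\eta)]\le C(d)\,t$, as required. The hard part is exactly this discrete estimate, and in particular the requirement that the dimensional constant be uniform in $t$ (equivalently, in the scale $j$ and the mass $\lambda_j$): this is precisely why the scale-invariance of $G$ and the dyadic separation of scales are used at the outset, so that $t$ enters the final bound only as an overall multiplicative factor.
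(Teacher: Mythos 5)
Your argument is correct, but note that the paper does not prove Theorem \ref{t:greedy} at all: it simply quotes it from \cite{GM-deijfen}, deriving it from inequality (11) and Lemma 2.1 of that article. What you propose is a self-contained proof along the classical greedy-lattice-animal lines: dyadic decomposition of $\nu$ in the radius variable together with the subadditivity $G(\xi)\le\sum_j G(\xi_j)$, scale invariance of $G$ to normalise each scale to a unit-intensity centre process with radii bounded by a single number $t_j$, and then a one-scale bound $\E[G(\eta)]\le C(d)\,t$ obtained by discretising into unit cubes, counting lattice animals and using Poisson tails for long paths, and using the nearest-point distance for short paths. This is essentially the mechanism behind the cited reference (and behind Martin's discrete result), so the route is not conceptually new, but it has the merit of being self-contained, of making transparent where the exponent $1/d$ comes from (the scaling step), and where $d\ge 2$ is used (integrability of $s\mapsto s^{d-2}$ near $0$ in the short-path estimate), which the paper explicitly flags as necessary. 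One step deserves a word more than you give it: in the short-path case you bound the ratio by $t\,\lvert\chi\cap B(0,\rho)\rvert/d(0,\chi)$ and assert its expectation is $C(d)\,t$, but the numerator and denominator are dependent, so integrability of $1/d(0,\chi)$ alone is not quite a proof; conditioning on $D=d(0,\chi)$ (given $D=s$ the process outside $B(0,s)$ is again a unit Poisson process, so $\E[\lvert\chi\cap B(0,\rho)\rvert\mid D=s]\le 1+\omega_d\rho^d$), or a H\"older inequality with exponents $(3/2,3)$, closes this immediately. With that routine completion, your proof is sound and uniform in the scale, as required for the summation over $j$.
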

This is a consequence of (11) in \cite{GM-deijfen} and Lemma 2.1 in the same article.
Note that the results requires the assumption $d \ge 2$.
The result is the analogue in the continuous setting of a result by Martin \cite{Martin-greedy} in the discrete setting.

\begin{corollary} \label{c:greedy} Let $\nu$ be a finite measure on $(0,+\infty)$.  Then $G(\infty)$ is constant almost surely. Moreover, there exists a constant $C=C(d)$ such that
\[
G(\infty) \le C \int_{(0,+\infty)} \nu\big([r,+\infty)\big)^{1/d} dr.
\]
\end{corollary}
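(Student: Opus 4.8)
Corollary \ref{c:greedy} asserts two things: that $G(\infty)$ is almost surely constant, and that it satisfies the same quantitative bound $C\int_0^\infty \nu([r,\infty))^{1/d}\,dr$ as $\E(G)$ from Theorem \ref{t:greedy}.

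\medskip

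The plan is as follows. First I would establish that $G(\infty)$ is almost surely constant. The natural tool is a zero-one law: $G(\infty) = \lim_{s\to\infty} G(s)$, and $G(s)$ is the supremum of $r(\pi)/\ell(\pi)$ over paths starting at $0$ that leave the open ball $B_o(0,s)$. For such a path, only the balls of $\xi$ touching the path matter, but more to the point, the value $G(\infty)$ should not depend on the configuration of $\xi$ in any bounded region. To make this precise, I would argue that for any $s_0 > 0$, changing $\xi$ inside $B(0,s_0) \times (0,+\infty)$ to any other locally finite configuration does not change $G(\infty)$: indeed, given a path $\pi$ from $0$ leaving every ball, one can replace its initial portion (inside $B(0, 2s_0)$, say) by a straight segment and lose only a bounded amount of ``reward'' $r(\cdot)$ while the length $\ell(\pi) \to \infty$ along a sequence of near-optimal paths for $G(s)$ as $s \to \infty$; hence the ratio $r(\pi)/\ell(\pi)$ in the limit is unaffected. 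Wait --- more carefully: $G(\infty)$ is a limit of suprema over paths that escape to infinity, so the contribution of any fixed bounded region is asymptotically negligible in the ratio. Therefore $G(\infty)$ is measurable with respect to the tail $\sigma$-field of the i.i.d.-like structure of $\xi$ (formally: $\xi$ restricted to $(\R^d \setminus B(0,n)) \times (0,+\infty)$, intersected over $n$), and by the Kolmogorov zero-one law for Poisson processes (or Hewitt--Savage-type argument, using the ergodicity of the Poisson process under translations), $G(\infty)$ is almost surely constant.

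\medskip

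Second, for the quantitative bound, I would use Theorem \ref{t:greedy} together with a translation argument. By Theorem \ref{t:greedy} applied to $\xi$, $\E(G(\xi)) \le C \int_0^\infty \nu([r,\infty))^{1/d}\,dr$. Now $G(s;\xi)$ concerns paths from $0$ that leave $B_o(0,s)$; such a path, once it first hits $S(0,s)$ at a point $x$ with $\|x\| = s$, can be decomposed, and the tail portion is a path starting from a point at distance $s$ from the origin. By translation invariance of the Poisson process, the law of $\sup$ of $r(\pi')/\ell(\pi')$ over paths $\pi'$ starting from such a point equals the law of $G(\xi)$ translated. The cleanest route: observe that $G(\infty;\xi) \le G(\xi_{s})$ where $\xi_s$ is $\xi$ suitably re-centered, or more simply, note that since $G(\infty)$ is a.s. a constant $g$, and $G(\infty) \le G(s) \le G$ for every $s$ --- no wait, $G(\infty) = \lim_s G(s)$ and $G(s) \le G(0) = G$, so $G(\infty) \le G$ pointwise. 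Hence $g = \E(G(\infty)) \le \E(G) \le C\int_0^\infty \nu([r,\infty))^{1/d}\,dr$, which is exactly the claimed bound. So in fact the quantitative part is immediate from Theorem \ref{t:greedy} once we know $G(\infty)$ is a constant, since a constant random variable equals its own expectation and $G(\infty) \le G$.

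\medskip

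I expect the main obstacle to be the zero-one law / almost-sure-constancy argument --- specifically, verifying carefully that $G(\infty)$ is genuinely tail-measurable, i.e., that modifying $\xi$ in a bounded region leaves $G(\infty)$ unchanged. The subtlety is that adding a single enormous ball near the origin could a priori inflate $r(\pi)$ for paths passing through it; one must check that such a ball contributes only a fixed finite amount to $r(\pi) = \sum_i r(x_i)$ (each center is counted once), while the relevant paths have $\ell(\pi) \to \infty$, so the effect washes out in the limiting ratio $G(\infty)$. Once tail-measurability is secured, the conclusion follows from ergodicity of the Poisson point process under the translation group of $\R^d$ (or, restricting to translations along a fixed axis, from the Kolmogorov zero-one law applied to the independent restrictions of $\xi$ to disjoint slabs). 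The quantitative bound then costs nothing beyond the pointwise inequality $G(\infty) \le G$ and Theorem \ref{t:greedy}.
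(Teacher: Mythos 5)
Your proposal is correct and follows essentially the same route as the paper: the paper notes that $G(\infty;\xi)=G(\infty;\xi\cap B(0,r)^c\times(0,+\infty))$ for every $r$ (exactly the ``bounded region contributes a fixed finite amount to $r(\pi)$ while $\ell(\pi)\to\infty$'' point you identify), concludes a.s.\ constancy by the $0$--$1$ law, and then uses $G(\infty)\le G$, $G(\infty)=\E[G(\infty)]$, and Theorem \ref{t:greedy} for the bound. No substantive difference.
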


\begin{proof}
For any $r>0$, we have $G(\infty ; \xi) = G(\infty ; \xi \cap B(0,r)^c \times (0,+\infty))$. 
Therefore, by $0-1$ law, $G(\infty;\xi)$ is almost surely constant. 
Therefore, using $G(\infty) \le G$ and Theorem \ref{t:greedy}, we get a constant $C=C(d)$ such that
\[
G(\infty)=\E[G(\infty)] \le \E(G) \le C \int_{(0,+\infty)} \nu\big((r,+\infty)\big)^{1/d} dr.
\]
The corollary is proven.
\end{proof}

\subsection{Upper bound on $\ell(\pi)/\ell(\pi_\rho)$}
\label{s:p:controlellrho}

The aim of Section \ref{s:p:controlellrho} is to prove the following result.

\begin{prop} \label{p:controlellrho}
Let $\lambda>0$. 
Let $\chi$ be a Poisson point process on $\R^d$ with intensity $\lambda$ times the Lebesgue measure.
There exists $C=C(d)$ such that, for any $\rho>0$,
\[
\lim_{k \to \infty} \left[\sup\left\{ \frac{\ell(\pi)}{\ell(\pi_\rho)}, \; \pi \text{ a } \chi\text{-regular path from }0\text{ such that }\ell(\pi_\rho) \ge k\rho\right\} \right]\le \max(1,\lambda \rho^d) C. 
\]
\end{prop}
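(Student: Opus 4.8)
The plan is to follow the heuristic sketched just before the statement of Theorem \ref{t:controle-l}, making the counting argument precise and then reducing the whole statement to a single quantitative estimate on Poisson processes. Fix $\rho>0$ and let $\pi=(x_0,\dots,x_n)$ be a $\chi$-regular path from $0$ with $\rho$-skeleton $\pi_\rho=(a_0,\dots,a_k)$. By construction each $a_j$ is a point $\pi(t_j)$ on the curve, the successive $a_j$ are at mutual distance $\rho$ (except possibly the last step), and between times $t_j$ and $t_{j+1}$ the path stays inside $\overline{B(a_j,\rho)}$ until it crosses $S(a_j,\rho)$. Every vertex $x_i$ of $\pi$ other than $x_0$ and $x_n$ lies in $\chi$, and every such vertex lies in some $\overline{B(a_j,\rho)}$; conversely, any two points lying in a common ball of radius $\rho$ are at Euclidean distance at most $2\rho$. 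Since consecutive vertices of $\pi$ are either both in $\chi$ or are endpoints, the number of edges of $\pi$ is at most the number of interior vertices plus two, hence at most $N(a_0,\dots,a_k)+2$, where $N(a_0,\dots,a_k)=\card\big(\chi\cap\bigcup_{j=0}^{k}B(a_j,\rho)\big)$; one must be slightly careful bookkeeping the edges near $a_0$ and $a_n$, but the crude bound $\ell(\pi)\le 2\rho\,N(a_0,\dots,a_k)+2(k+1)\rho$ follows. Using $\ell(\pi_\rho)=k\rho$ this gives
\[
\frac{\ell(\pi)}{\ell(\pi_\rho)}\le 2\,\frac{k+1}{k}+2\,\frac{N(a_0,\dots,a_k)}{k}.
\]

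It then remains to control $\limsup_{k\to\infty}\sup\{N(a_0,\dots,a_k)/k\}$, the supremum being over all admissible $\rho$-skeletons $(a_0,\dots,a_k)$ arising from $\chi$-regular paths. The key point is that such a skeleton is not an arbitrary sequence of points of $\R^d$: consecutive points are at distance exactly $\rho$, so the union $\bigcup_{j=0}^k B(a_j,\rho)$ is covered by $k+1$ balls of radius $\rho$ whose centers form a $\rho$-step chain. I would next discretize: cover $\R^d$ by a fixed lattice of cubes of side of order $\rho$, so that each ball $B(a_j,\rho)$ meets only a bounded number $c_1=c_1(d)$ of cubes, and each cube is met by the balls of a skeleton only if it is within distance $O(\rho)$ of the polygonal chain $a_0,\dots,a_k$. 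The number of cubes that can be reached in this way is at most $c_2(d)\,k$. Hence $N(a_0,\dots,a_k)$ is bounded by the sum of $\card(\chi\cap Q)$ over a family of at most $c_2(d)\,k$ cubes $Q$, chosen along a chain. The heart of the argument is therefore a uniform (in the choice of the chain of cubes) law of large numbers: almost surely,
\[
\limsup_{k\to\infty}\ \sup\left\{\frac1k\sum_{Q\in\mathcal{C}}\card(\chi\cap Q)\ :\ \mathcal{C}\text{ a connected chain of at most }c_2 k\text{ cubes}\right\}\ \le\ c_3(d)\,\max(1,\lambda\rho^d),
\]
and assembling these pieces with the displayed bound on $\ell(\pi)/\ell(\pi_\rho)$ yields the Proposition with $C=C(d)$ absorbing $c_1,c_2,c_3$ and the numerical constants.

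The main obstacle is precisely this last uniform law of large numbers: the number of candidate chains $\mathcal{C}$ of length $\sim k$ grows exponentially in $k$, so a naive union bound over all chains is hopeless unless one has exponential concentration to match. The standard remedy — which is exactly the philosophy of greedy lattice animals/paths à la Martin, already invoked in Section \ref{s:greedy} — is that $\card(\chi\cap Q)$ is a Poisson variable with parameter $\lambda\,|Q|=O(\lambda\rho^d)$, and Poisson variables have finite exponential moments; the sum of $m$ independent such variables therefore has an upper-tail bound of the form $\exp(-c\,m)$ once the threshold exceeds a constant multiple of $\max(1,\lambda\rho^d)m$, and the number of chains of length $m$ grows only like $D^m$ for $D=D(d)$. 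Choosing the threshold large enough that $D\,e^{-c}<1$, a Borel–Cantelli argument over $k$ (and over the finitely many starting cubes near $0$, reduced to one by translation invariance of the bound) closes the estimate. The one genuinely delicate bookkeeping point is to verify that the constant in the Poisson tail bound can be taken so that the final threshold is of the stated order $\max(1,\lambda\rho^d)$ uniformly in $\rho>0$ — in particular handling both the dilute regime $\lambda\rho^d\le 1$ and the dense regime $\lambda\rho^d>1$ — and to check that the number-of-cubes-per-skeleton-step bound $c_2(d)$ really is independent of $\rho$, which it is because rescaling by $\rho$ reduces everything to unit-radius balls on a unit lattice.
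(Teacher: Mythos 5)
Your proposal is correct and follows essentially the same route as the paper's proof: the crude bound $\ell(\pi)\le 2\rho\,\card(\chi\cap\bigcup_j B(a_j,\rho))+2(k+1)\rho$, then a reduction (after rescaling to unit radius) to a uniform estimate on Poisson counts over connected lattice animals of size $O(k)$, closed by the animal-count $D^m$ versus Poisson Chernoff bound and Borel--Cantelli, with the $\max(1,\lambda\rho^d)$ threshold handled exactly as you indicate. The only cosmetic difference is that the paper fixes the connectivity issue by enlarging the skeleton balls to radius $\kappa(d)$ before taking the cubes, whereas you take cubes within distance $O(\rho)$ of the polygonal chain; these are equivalent.
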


Note that $\ell(\pi)/\ell(\pi_\rho)$ is always greater or equal to $1$. Therefore the upper-bound cannot be of the form $\lambda\rho^d C$. 
Recall that we very quickly sketched the proof when announcing the plan of the proof of Theorem \ref{t:controle-l}.

\begin{proof}
By scaling, it is sufficient to prove the result when $\rho=1$.
We henceforth assume $\rho=1$ and aim at showing the existence of $C=C(d)$ such that
\[
\lim_{k \to \infty} \left[\sup\left\{ \frac{\ell(\pi)}{\ell(\pi_1)}, \; \pi \text{ a } \chi\text{-regular path from }0\text{ such that }\ell(\pi_1) \ge k\right\} \right]\le \max(1,\lambda) C. 
\]
Using the standard coupling, we get that the left hand-side is non-decreasing in $\lambda$.
It is therefore sufficient to prove the result when $\lambda \ge 1$.
We henceforth assume $\lambda\geq 1$ and aim at showing the existence of $C=C(d)$ such that
\[
\lim_{k \to \infty} \left[\sup\left\{ \frac{\ell(\pi)}{\ell(\pi_1)}, \; \pi \text{ a } \chi\text{-regular path from }0\text{ such that }\ell(\pi_1) \ge k\right\} \right]\le\lambda C. 
\]
Let $\pi=(x_0,\dots,x_n)$ be a regular path such that $x_0=0$.
Write $\pi_1=(a_0,\dots,a_k)$ and assume $\ell(\pi_1)=k \ge 1$.
For each $j \in \{1,\dots,k\}$, write 
\[
\gamma^j=\left(y^j_0,\dots,y^j_{n(j)}\right)
\]
for the subpath of $\pi$ between $a_{j-1}$ and $a_j$ and
\[
\gamma^{k+1}=\left(y^{k+1}_0,\dots,y^{k+1}_{n(k+1)}\right)
\]
for the subpath of $\pi$ after $a_k$.
With this notations,
\begin{equation}\label{e:sum}
\ell(\pi) = \sum_{j=1}^{k+1} \sum_{i=1}^{n(j)} \left\|y^j_i-y^j_{i-1}\right\|.
\end{equation}
For any $j \in  \{1,\dots,k+1\}$ and any $i \in \{1,\dots,n(j)\}$,
\[
\left\|y^j_i-y^j_{i-1}\right\| \le 2
\]
(recall $\rho=1$) as both points belong to the same ball of radius $1$.
The points of the family
\[
\left(y^j_i\right)_{j \in \{1,\dots,k+1\} \text{ and } i \in \{1,\dots,n(j)-1\}}
\]
are distinct points of $\chi$.
Moreover any such $y_i^j$ belongs to $B(y_0^j,1) = B(a_{j-1},1)$.
The number of term of the sum is therefore bounded by 
\[
\card\left(\chi \cap S(a_0,\dots,a_k)\right) + k+1
\]
where
\[
S(a_0,\dots,a_k) = \bigcup_{j=0}^k B(a_j,1).
\]
As a consequence,
\[
\ell(\pi)\le 2k+2+2\card\left(\chi \cap S(a_0,\dots,a_k)\right).
\]
On the other hand,
\[
\ell(\pi_\rho)= k.
\]
Therefore, it remains to prove the existence of a constant $C_1=C_1(d)$ such that 
\begin{equation}\label{e:intermediaire-lrhol}
\limsup_{k \to \infty} \left[\sup\left\{ \frac{\card\left(\chi \cap S(a_0,\dots,a_k)\right)}{k}, \; (a_0,\dots,a_k) \text{ is a } 1-\text{skeleton from }0\right\} \right]\le \lambda C_1. 
\end{equation}

Let $\kappa = \kappa(d) \geq 1$ such that $B(a, \kappa) \cap B(b, \kappa) \cap \Z^d \neq \emptyset$ for any $a,b\in \R^d$ satisfying $\|a-b\|=1$.
Let $(a_0,\dots,a_k), k \ge 1$, be a $1$-skeleton from $0$. 
Define
\[
S'(a_0,\dots,a_k) = \bigcup_{j=0}^k B(a_j,\kappa)
\]
and
\[
A(a_0,\dots,a_k) = \{x \in \Z^d : S'(a_0,\dots,a_k) \cap (x + [-1/2,1/2)^d) \neq \emptyset\}.
\]
We have 
\[
S(a_0,\dots,a_k) \subset  S'(a_0,\dots,a_k) \subset \bigcup_{x \in A(a_0,\dots,a_k) } (x+[-1/2,1/2)^d)
\]
and therefore
\[
\card\left(\chi \cap S(a_0,\dots,a_k)\right) \le \card\left(\chi \cap \bigcup_{x \in A(a_0,\dots,a_k) } (x+[-1/2,1/2)^d)\right).
\]
The random variable on the right hand-side is a Poisson random variable of parameter $\lambda \,\card A(a_0,\dots,a_k) $.
The set $A(a_0,\dots,a_k)$ contains $0$ and is a connected subset of $\Z^d$ for the usual graph structure on $\Z^d$. Indeed, $B(a_j, \kappa) \cap \Z^d$ is connected for any $j\in \{ 0,\dots , k \}$ and $B(a_{j-1}, \kappa) \cap B(a_j, \kappa) \cap \Z^d \neq \emptyset$ for any $j\in \{ 1,\dots , k \}$. In other words, $A(a_0,\dots,a_k)$ is an animal of $\Z^d$.
Moreover, its cardinality is at most $C_2k$ for some constant $C_2=C_2(d)$.
Let us denote by $\cA_m$ the set of animals of $\Z^d$ ({\it i.e.}, connected subsets of $\Z^d$ containing $0$) of cardinality at most $m$.
By the previous remarks we get, for any fixed $k \ge 1$,
\begin{align*}
& \sup\left\{\card\left(\chi \cap S(a_0,\dots,a_k)\right), \;  (a_0,\dots,a_k) \text{ is a } 1-\text{skeleton from }0\right\} \\
& \le \sup_{A \in \cA_{C_2 k}} \card\left(\chi \cap \bigcup_{x \in A} (x+[-1/2,1/2)^d)\right).
\end{align*}
There exists a constant $C_3=C_3(d)$ such that, for all $m \ge 1$,
\[
\card\cA_m \le C_3^m.
\]
See for example (4.24) in \cite{Grimmett-percolation}.
The previous supremum is therefore a supremum over at most $C_3^{C_2 k}$ Poisson random variables of parameter at most $C_2 k \lambda $.
We then get, for any $M>0$ and any $k$,
\begin{align*}
& \P\left[\sup\left\{\card\left(\chi \cap S(a_0,\dots,a_k)\right), \;  (a_0,\dots,a_k) \text{ is a } 1-\text{skeleton from }0\right\}  \ge M C_2 k \lambda \right] \\
& \le \P\left[ \sup_{A \in \cA_{C_2 k}} \card\left(\chi \cap \bigcup_{x \in A} (x+[-1/2,1/2)^d)\right) \ge M C_2 k \lambda \right] \\
& \le C_3^{C_2 k} \P\left[ \cP(C_2 k \lambda )\ge MC_2 k \lambda \right]
\end{align*}
where $\cP(C_2 k \lambda )$ denotes a Poisson random variable with parameter $C_2 k \lambda$.
Using classical Chernov bounds for Poisson random variables\footnote{
If $X$ is a Poisson random variable with parameter $u>0$ and if $M>1$ then
\[
\P[X \ge Mu] \le \exp\big[-u(1-M+M\ln(M))\big] = \left(\frac e M\right)^{Mu}e^{-u}.
\]
}, we then get, for any $M>1$ and any $k$,
\begin{align*}
& \P\left[\sup\left\{\card\left(\chi \cap S(a_0,\dots,a_k)\right), \;  (a_0,\dots,a_k) \text{ is a } 1-\text{skeleton from }0\right\}  \ge M C_2 k \lambda \right] \\
& \le C_3^{C_2 k} \left(\frac e M\right)^{M C_2 k \lambda} e^{-C_2 k \lambda} \\
& \le C_3^{C_2 k} \left(\frac 1 2\right)^{M C_2 k} \text{ if } M \ge 2e \text{ using also } \lambda \ge 1 \\
& = \left(\frac{C_3}{2^M}\right)^{C_2k}.
\end{align*}
Chose $M=M(d) \ge 2e$ such that
\[
\left(\frac{C_3}{2^M}\right)^{C_2} \le \frac 1 2.
\]
Then \eqref{e:intermediaire-lrhol} holds with $C_1=C_1(d)=MC_2$. 
\end{proof}

\subsection{Reduction of the study of $\tau(\pi)/\ell_\rho(\pi)$ to the study of $T^\square$ and $G$}
\label{s:reduction}

\subsubsection{Framework and result}

Let $\xi$ be a Poisson point process driven by a finite measure $\nu$ on $(0,\infty)$.
We write as usual $\xi=\{(c,r(c)), c \in \chi\}$.
Let $\rho>0$.
We say that a sequence $(a_0,\dots,a_k), k \ge 1$, is a $\rho$-skeleton if $a_0=0$ and, for all $j \in \{1,\dots,k\}$,
\[
\|a_j-a_{j-1}\|=\rho.
\]
We set
\[
T^\square(a_0,\dots,a_k) = \inf \left\{ \sum_{j=1}^k T\left(a_{j-1},a_j ; \xi^j\right) : \xi^1, \dots, \xi^k \text{ are disjoint finite subsets of } \xi \right\}.
\]
In words, this is the infimum\footnote{
See Section \ref{s:mesurabilite} in Appendix for remarks on the measurability of $T^\square$.}
of the sum over $j$ of the travel times from $a_{j-1}$ to $a_j$ where one is not allowed to use the same ball of the Boolean model for two different $j$.

For any $r_0>0$ we set
\[
\xi^{> r_0} = \{(c, r(c)), c \in \chi \text{ such that } r(c) > r_0\}.
\]
We will freely use this kind of notation for different point processes.

The aim of Section \ref{s:reduction} is to prove the following result.
We couple the two point processes as explained in the paragraph "Domination and coupling" of Section \ref{s:couplings}.

\begin{prop} \label{p:reduction} 

Let $\nu$ and $\hat\nu$ be two finite measures on $(0,+\infty)$. 
We assume that $\nu$ is dominated by $\hat\nu$.
Let $\xi$ and $\hat\xi$ be two coupled Poisson point processes driven by $\nu$ and $\hat\nu$.
Let $\rho, \delta >0$ and $s \ge \rho$.
Let $\pi=(x_0,\dots,x_n)$ be a path from $0$ to $B(0,s)^c$.
Then
\begin{align*}
\frac{\widetilde\tau(\pi;\xi)}{\ell(\pi_\rho)} 
  & \ge  \inf \left\{ \frac 1 {k \rho} T^\square\left(a_0,\dots,a_k;\hat{\xi} \right) :  k \ge \lfloor s/\rho \rfloor \text{ and } (a_0,\dots,a_k) \text{ is a }\rho\text{-skeleton from }0\right\}   \\
  & - (2+2\delta^{-1}) G\left(s-\rho; \hat{\xi}^{>\delta\rho}\right) - 2\delta.
\end{align*}
\end{prop}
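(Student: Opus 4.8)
The starting point is to decompose $\pi$ along its $\rho$-skeleton $\pi_\rho = (a_0,\dots,a_k)$. By construction of the skeleton, $k = \ell(\pi_\rho)/\rho \ge \lfloor s/\rho\rfloor$ since $\pi$ reaches $B(0,s)^c$ and \eqref{e:length-skeleton} applies. The points $a_i$ lie on $\pi$, so $\pi$ splits into consecutive subpaths $\gamma^1,\dots,\gamma^{k}$ (plus a final piece after $a_k$), where $\gamma^j$ goes from $a_{j-1}$ to $a_j$, and $\widetilde\tau(\pi;\xi) \ge \sum_{j=1}^k \widetilde\tau(\gamma^j;\xi)$. Since each $\gamma^j$ is a path from $a_{j-1}$ to $a_j$, the naive hope is $\widetilde\tau(\gamma^j;\xi) \gtrsim T(a_{j-1},a_j;\xi)$, but the local travel times $\widetilde\tau(\gamma^j;\xi)$ are coupled through balls of $\xi$ (hence of $\hat\xi$ by the domination coupling $r(c)\le \hat r(c)$) that are shared between several subpaths. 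The role of $T^\square$ is precisely to absorb the cost of this sharing, and the $G$ and $\delta$ terms are the price paid to dispose of the problematic (large) balls.

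First I would pass from $\xi$ to $\hat\xi$: because $r^\xi(c) \le r^{\hat\xi}(c)$ for every $c$, the sets $Z_i^\pm$ only grow, so $\widetilde\tau(\gamma^j;\xi) \ge \widetilde\tau(\gamma^j;\hat\xi)$; it therefore suffices to work entirely with $\hat\xi$ and bound $\sum_j \widetilde\tau(\gamma^j;\hat\xi)$ from below. Next, fix the threshold $\delta\rho$ and split the balls of $\hat\xi$ into small ($r \le \delta\rho$) and large ($r > \delta\rho$). For the large balls I would argue that each large ball $B(c,r(c))$ with $r(c)>\delta\rho$ can only affect $\widetilde\tau$ along the subpaths $\gamma^j$ that actually enter it, and the total Euclidean length of $\pi$ contained in (or near) large balls is controlled by a greedy path: roughly, concatenating the pieces of $\pi$ that meet large balls produces a path $\pi'$ from $0$ (or from near $0$) whose $r$-weight $\sum_{x\in\pi'} r^{\hat\xi}(x)$ (restricted to large balls) dominates the discrepancy, while $\ell(\pi') \le \ell(\pi)$. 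Dividing by $\ell(\pi_\rho) = k\rho$ and using $\ell(\pi) \le \ell(\pi_\rho) + \dots$ one extracts the factor $G(s-\rho;\hat\xi^{>\delta\rho})$; the combinatorial bookkeeping of how many subpaths a given large ball touches, and the fact that a ball of radius $r$ can straddle at most on the order of $r/(\delta\rho)$ consecutive skeleton steps, is what produces the constant $2+2\delta^{-1}$. The $-2\delta$ term comes from the small balls straddling the skeleton breakpoints $a_j$: each small ball has radius at most $\delta\rho$, so the total length it can "steal" near a breakpoint is at most $2\delta\rho$ per step, i.e. $2\delta$ after normalization by $\rho$.

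Once the large balls are neutralized, each modified subpath $\tilde\gamma^j$ from $a_{j-1}$ to $a_j$ uses only small balls, and — after possibly reassigning to $\gamma^j$ only the balls that meet it and declaring these disjoint across $j$ (which is where the straddling argument above guarantees we lose at most the advertised error) — we may bound $\widetilde\tau(\tilde\gamma^j;\hat\xi^j) \ge T(a_{j-1},a_j;\hat\xi^j)$ for disjoint finite $\hat\xi^j \subset \hat\xi$. Summing over $j$ and taking the infimum over admissible disjoint families gives exactly $\sum_{j=1}^k T^\square$-type contribution, hence $\sum_j \widetilde\tau(\gamma^j;\hat\xi) \ge T^\square(a_0,\dots,a_k;\hat\xi) - (\text{large-ball error}) - (\text{small-ball error})$. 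Dividing by $k\rho$, bounding the quantity from below by the infimum over all $\rho$-skeletons from $0$ with $k \ge \lfloor s/\rho\rfloor$, and collecting the two error terms as $(2+2\delta^{-1})G(s-\rho;\hat\xi^{>\delta\rho})$ and $2\delta$ respectively yields the claim. The main obstacle is clearly the large-ball analysis: making precise the claim that the influence of balls of radius $>\delta\rho$ on $\sum_j\widetilde\tau(\gamma^j)$ can be rerouted into a single greedy path weight, keeping track simultaneously of (a) the extra Euclidean length introduced when we detour $\pi$ around or through such balls, (b) the overcounting when one ball meets many subpaths, and (c) the requirement that $\pi$ exits $B(0,s)$ so the relevant greedy quantity is $G(s-\rho;\cdot)$ and not $G(\infty;\cdot)$ — this is why the radius appears as $s-\rho$ rather than $s$, the $-\rho$ accounting for the last skeleton point $a_k$ possibly lying inside $B(0,s)$.
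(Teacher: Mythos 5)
Your overall architecture (truncate the large balls, cut $\pi$ at the skeleton points, pay $2\delta$ for the cutting error of small balls, feed the subpaths into $T^\square$, and bound the large-ball contribution by a greedy quantity) matches the paper's, and your account of the $-2\delta$ term and of the disjointness needed for $T^\square$ is essentially right (for the latter, note that no ``reassignment'' or straddling argument is needed: by definition $\widetilde\tau(\gamma^j)$ only uses balls centered at the \emph{interior points} of $\gamma^j$, which are distinct points of $\pi$, so the finite sets $\xi^j$ are automatically disjoint; this is exactly Lemma \ref{l:prepare-BK}).

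The genuine gap is in the large-ball step. Removing the balls of radius $>\delta\rho$ costs $2\,r(\pi;\hat\xi^{>\delta\rho})$ (Lemma \ref{l:modification-sigma}), and to turn this into a term $G(s-\rho;\hat\xi^{>\delta\rho})$ you must divide by the length of a path through those centers that is comparable to the normalizer $\ell(\pi_\rho)=k\rho$. Your sketch divides by $\ell(\pi)$ (or by the length of a concatenation of pieces of $\pi$, which moreover need not start at $0$ nor leave $B(0,s-\rho)$) and then invokes an inequality of the type $\ell(\pi)\le\ell(\pi_\rho)+\dots$, but no such deterministic control exists: $\pi$ can be arbitrarily longer than its skeleton, and bounding $\ell(\pi)/\ell(\pi_\rho)$ is a separate \emph{probabilistic} ingredient (Proposition \ref{p:controlellrho}) that is deliberately kept out of Proposition \ref{p:reduction}. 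The paper's fix is the $(\rho,\delta)$-enhanced skeleton $\pi_\rho^\delta$: it visits $a_0,\dots,a_k$ and, near each $a_j$, the centers $c$ of large balls with $\|c-a_j\|\le\delta^{-1}r(c)$. Every center of a large ball lying on $\pi$ is within $\rho\le\delta^{-1}r(c)$ of some $a_j$, so $r(\pi;\hat\xi^{>\delta\rho})\le r(\pi_\rho^\delta;\hat\xi^{>\delta\rho})$; the detours cost at most $2\delta^{-1}r(\pi_\rho^\delta;\hat\xi^{>\delta\rho})$ in length, so $\ell(\pi_\rho)\le\ell(s_\rho^\delta)\le\ell(\pi_\rho)+2\delta^{-1}r(\pi_\rho^\delta;\hat\xi^{>\delta\rho})$, and $\pi_\rho^\delta$ starts at $0$ and leaves $B(0,s-\rho)$. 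The elementary manipulation $(a+c)/(b+c)\ge a/b$ then produces the error $(2+2\delta^{-1})\,r(\pi_\rho^\delta)/\ell(\pi_\rho^\delta)\le(2+2\delta^{-1})G(s-\rho;\hat\xi^{>\delta\rho})$ while keeping the main term normalized by $\ell(\pi_\rho)$ — this is the constant $2+2\delta^{-1}$, not a count of how many skeleton steps a large ball can straddle. Without this (or an equivalent) construction, your argument cannot close.
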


\subsubsection{Proof of Proposition \ref{p:reduction}}

Notice that Lemmas \ref{l:cutting} and \ref{l:prepare-BK} are actually purely deterministic.

\begin{lemma} \label{l:cutting} Let $\rho>0$. Let $\pi$ be a path and let $k$ and $(t_0,\dots,t_k)$ be defined as in the construction of the $\rho$-skeleton of $\pi$ (see section \ref{s:skeleton}).
Cutting $\pi$ (seen as a parametrized curve) 
at each $t_i$ and throwing away the part after time $t_k$, if any, we get $k$ sub-paths $\gamma^1, \dots, \gamma^k$.
 Let $M>0$. Assume, for all $c \in \chi, r(c) \le M$.
 Then
 \[
 \widetilde\tau(\pi) \ge \sum_{j=1}^k \widetilde\tau\left(\gamma^j\right) - 2Mk.
 \]
\end{lemma}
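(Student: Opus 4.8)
The plan is to track, summand by summand, how cutting the path $\pi$ at the times $t_0,\dots,t_k$ affects the local travel time $\widetilde\tau$. Recall that $\widetilde\tau(\pi)=\sum_{i=1}^n\ell([x_{i-1},x_i]\setminus(Z^+_{i-1}(\pi)\cup Z^-_i(\pi)))$, so the only way $\widetilde\tau(\pi)$ differs from $\sum_j\widetilde\tau(\gamma^j)$ comes from the vertices $a_i=\pi(t_i)$ of the skeleton, which are the points at which we split. The reindexing of the segments of $\pi$ into the $\gamma^j$ is exact away from these breakpoints: a segment $[x_{i-1},x_i]$ of $\pi$ that lies entirely in the interior of some $\gamma^j$ contributes the same term to $\widetilde\tau(\pi)$ and to $\widetilde\tau(\gamma^j)$, because the balls $B(x_{i-1},r(x_{i-1}))$ and $B(x_i,r(x_i))$ used to form $Z^\pm$ are the same in both contexts. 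So the whole discrepancy is localized at the finitely many breakpoints.

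Next I would analyze one breakpoint $a_i=\pi(t_i)$. In $\pi$, the segments incident to $a_i$ are trimmed by the ball $B(a_i,r(a_i))$ on both sides (the $Z^+$ on the segment before and the $Z^-$ on the segment after), whereas after cutting, $a_i$ becomes the last vertex of $\gamma^i$ and the first vertex of $\gamma^{i+1}$, and by the convention $Z_0^+=Z_n^-=\emptyset$ the corresponding $Z$-sets at the two new endpoints are empty. (One must also note that $a_i=\pi(t_i)$ is a point of $\chi$ when the construction places it at a sphere-crossing, but in the worst case $r(a_i)$ could be as large as $M$; if $a_i\notin\chi$ then $r(a_i)=0$ and there is nothing to lose there.) Thus, at the breakpoint $a_i$, the cut sub-paths $\gamma^i,\gamma^{i+1}$ fail to remove the portions of their incident segments that lay inside $B(a_i,r(a_i))$: the extra length they count is at most $2r(a_i)\le 2M$, since each of the two incident segments can meet $B(a_i,r(a_i))$ in a piece of length at most $2r(a_i)$, but actually each incident segment passes through the centre $a_i$ so the relevant piece on each side has length at most $r(a_i)\le M$, giving at most $2M$ in total at the breakpoint $a_i$. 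Summing over the $k$ breakpoints $a_1,\dots,a_k$ (and discarding the part of $\pi$ after $t_k$, which only decreases $\widetilde\tau$ on the right since all $Z$-sets are subsets of $\Sigma$ and $\widetilde\tau\ge0$), we get $\sum_{j=1}^k\widetilde\tau(\gamma^j)\le\widetilde\tau(\pi)+2Mk$, which is the claimed inequality.

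The one point needing a little care — and the main obstacle — is bookkeeping the orientation conventions: which $Z$-set is trimmed on which side of a breakpoint in $\pi$ versus in the $\gamma^j$, and making sure that throwing away the tail of $\pi$ after time $t_k$ cannot \emph{increase} the right-hand side. For the tail, since $\widetilde\tau$ of the discarded sub-path is nonnegative and removing it also cannot make the remaining breakpoint contributions worse (the relevant $Z$-sets only shrink), this is harmless. For the breakpoints, the safe route is to bound crudely: the total Lebesgue measure of $\overline\pi\cap\bigcup_{i=1}^k B(a_i,r(a_i))$ that is "lost" by the cutting is at most $\sum_{i=1}^k 2r(a_i)\le 2Mk$, because each $a_i\in\overline\pi$ and the chord of $\pi$ through $a_i$ inside $B(a_i,r(a_i))$ on either side has length at most $r(a_i)$. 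This crude count suffices for the stated inequality and avoids any delicate case analysis.
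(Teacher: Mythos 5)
There is a genuine gap at the very step you flagged as the delicate one. You locate the loss at a breakpoint $a_i$ in the ball $B(a_i,r(a_i))$ centred at the skeleton point, and conclude in particular that if $a_i\notin\chi$ then $r(a_i)=0$ and ``there is nothing to lose there''. This is wrong. The skeleton point $a_i=\pi(t_i)$ is in general not a vertex of $\pi$: it is the point where the curve crosses the sphere $S(a_{i-1},\rho)$, typically in the interior of a segment $[x_{l-1},x_l]$ of $\pi$, so $r(a_i)=0$ and the ball $B(a_i,r(a_i))$ plays no role in $\widetilde\tau(\pi)$ nor in the $\widetilde\tau(\gamma^j)$. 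The discrepancy at the cut comes instead from the balls centred at the $\pi$-vertices adjacent to the cut: in $\pi$ the piece $[x_{l-1},a_i]$ may be trimmed by $Z^-_l(\pi)\subset B(x_l,r(x_l))$, a ball centred \emph{beyond} the cut, while $\gamma^i$ (which ends at $a_i$ and does not contain the vertex $x_l$) cannot perform this trimming; symmetrically, $[a_i,x_l]$ loses in $\gamma^{i+1}$ the trimming by $B(x_{l-1},r(x_{l-1}))$. Each of these two losses is bounded by the radius of the corresponding ball, hence by $M$ thanks to the hypothesis $r(c)\le M$ for all $c\in\chi$; this is how one gets $2M$ per cut and $2Mk$ in total.

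Taken literally, your accounting would prove the stronger inequality $\widetilde\tau(\pi)\ge\sum_{j}\widetilde\tau(\gamma^j)-2\sum_{i=1}^k r(a_i)$, which is false. For instance, take $\pi=(x_0,x_1,x_2,x_3)$ collinear with $\|x_1-x_0\|=0.6\rho$, $\|x_2-x_1\|=0.5\rho$, $\|x_3-x_2\|=0.05\rho$, $r(x_1)=r(x_3)=0$ and $x_2\in\chi$ with $r(x_2)=M=0.3\rho$: then $a_1$ lies $0.1\rho$ before $x_2$, $k=1$, $r(a_1)=0$, and one computes $\widetilde\tau(\pi)=0.8\rho$ while $\widetilde\tau(\gamma^1)=\rho$, so the loss is $0.2\rho>0$ even though no skeleton point carries a ball. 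Your overall strategy is the paper's (the discrepancy is localized at the cuts, interior segments of the $\gamma^j$ contribute identically, the discarded tail only helps, and each cut costs at most $2M$); the proof is repaired by attributing the loss to the two $\pi$-vertices straddling each cut, which is exactly the case analysis in the paper ($A^j_i\le B^j_i+M$ when a segment of $\gamma^j$ touches a new endpoint, and $A^j_i\le B^j_i+2M$ when it touches both), giving at most $2M$ per subpath.
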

\begin{proof} The error term $2Mk$ is due to boundary effects related to the cutting at each $a_j$ and to the definition of $\widetilde\tau$.
For each $j \in \{1,\dots,k\}$, write
\[
\gamma^j=\left(y^j_0,\dots,y^j_{n(j)}\right).
\]
With appropriately defined passage times, we can write
\[
\sum_{j=1}^k \widetilde\tau\left(\gamma^j\right) = \sum_{j=1}^k \sum_{i=1}^{n(j)} A^j_i.
\]
Here, $A^j_i$ denotes the contribution of the segment $[y^j_{i-1},y^j_i]$ to the sum of the travel times.
Note that it does not only depend on $y^j_{i-1}$ and $y^j_i$.
It also depends on whether $i-1=0$ and on whether $i=n(j)$.

In the same way, disregarding what happens after time $t_k$ in the path $\pi$, we can write
\[
\widetilde\tau(\pi) \ge \sum_{j=1}^k \sum_{i=1}^{n(j)} B^j_i
\]
where the $B^j_i$ are also appropriately defined passage times.
Here, $B^j_i$ corresponds to the contribution of the segment $[y^j_{i-1},y^j_i]$ to $\widetilde\tau (\pi)$, but it may differ from $A^j_i$.
Let $j \in \{1,\dots,k\}$ and $i \in \{1,\dots,n(j)\}$.
One and only one of the following cases occurs (see Figure \ref{f:fig3}).
\begin{figure}[!ht]
\centering
\begingroup%
  \makeatletter%
  \providecommand\color[2][]{%
    \errmessage{(Inkscape) Color is used for the text in Inkscape, but the package 'color.sty' is not loaded}%
    \renewcommand\color[2][]{}%
  }%
  \providecommand\transparent[1]{%
    \errmessage{(Inkscape) Transparency is used (non-zero) for the text in Inkscape, but the package 'transparent.sty' is not loaded}%
    \renewcommand\transparent[1]{}%
  }%
  \providecommand\rotatebox[2]{#2}%
  \newcommand*\fsize{\dimexpr\f@size pt\relax}%
  \newcommand*\lineheight[1]{\fontsize{\fsize}{#1\fsize}\selectfont}%
  \ifx\svgwidth\undefined%
    \setlength{\unitlength}{331.78948098bp}%
    \ifx\svgscale\undefined%
      \relax%
    \else%
      \setlength{\unitlength}{\unitlength * \real{\svgscale}}%
    \fi%
  \else%
    \setlength{\unitlength}{\svgwidth}%
  \fi%
  \global\let\svgwidth\undefined%
  \global\let\svgscale\undefined%
  \makeatother%
  \begin{picture}(1,0.32862243)%
    \lineheight{1}%
    \setlength\tabcolsep{0pt}%
    \put(0,0){\includegraphics[width=\unitlength,page=1]{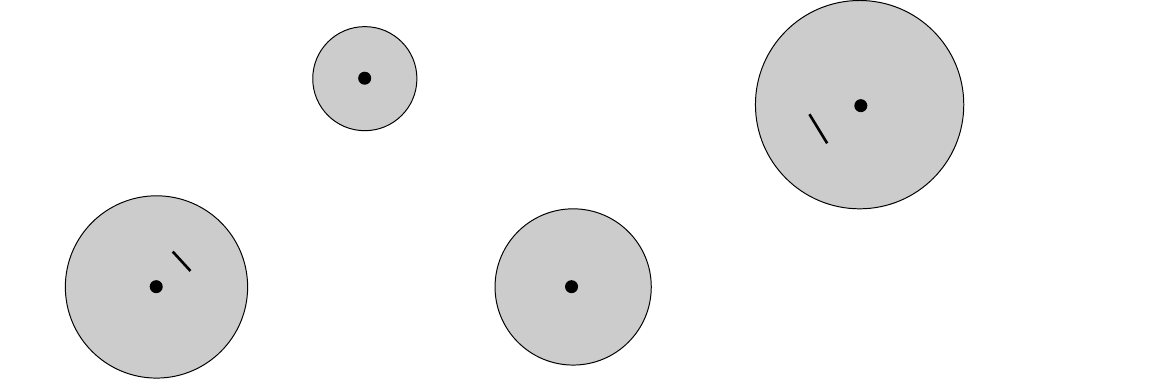}}%
    \put(0.28738091,0.27296716){\color[rgb]{0,0,0}\makebox(0,0)[lt]{\lineheight{1.25}\smash{\begin{tabular}[t]{l}$y_1^j$\end{tabular}}}}%
    \put(0.71802979,0.19441231){\color[rgb]{0,0,0}\makebox(0,0)[lt]{\lineheight{1.25}\smash{\begin{tabular}[t]{l}$a_{j+1}=y_3^j$\end{tabular}}}}%
    \put(0.46272653,0.0429137){\color[rgb]{0,0,0}\makebox(0,0)[lt]{\lineheight{1.25}\smash{\begin{tabular}[t]{l}$y_2^j$\end{tabular}}}}%
    \put(0.00682792,0.17056531){\color[rgb]{0,0,0}\makebox(0,0)[lt]{\lineheight{1.25}\smash{\begin{tabular}[t]{l}$\pi$\end{tabular}}}}%
    \put(0.24670073,0.12567683){\color[rgb]{0,0,0}\makebox(0,0)[lt]{\lineheight{1.25}\smash{\begin{tabular}[t]{l}$B_1^j$\end{tabular}}}}%
    \put(0.14710442,0.22387039){\color[rgb]{0,0,0}\makebox(0,0)[lt]{\lineheight{1.25}\smash{\begin{tabular}[t]{l}$A_1^j$\end{tabular}}}}%
    \put(0.6324611,0.10323259){\color[rgb]{0,0,0}\makebox(0,0)[lt]{\lineheight{1.25}\smash{\begin{tabular}[t]{l}$B_3^j$\end{tabular}}}}%
    \put(0.54268414,0.17757913){\color[rgb]{0,0,0}\makebox(0,0)[lt]{\lineheight{1.25}\smash{\begin{tabular}[t]{l}$A_3^j$\end{tabular}}}}%
    \put(0.42906017,0.21825933){\color[rgb]{0,0,0}\makebox(0,0)[lt]{\lineheight{1.25}\smash{\begin{tabular}[t]{l}$B_2^j=A_2^j$\end{tabular}}}}%
    \put(0,0){\includegraphics[width=\unitlength,page=2]{fig3.pdf}}%
    \put(0.1681459,0.0863994){\color[rgb]{0,0,0}\makebox(0,0)[lt]{\lineheight{1.25}\smash{\begin{tabular}[t]{l}$a_j = y_0^j$\end{tabular}}}}%
  \end{picture}%
\endgroup%
\caption{A portion of the path $\pi$ is represented in dashed line, and for a given $j$ we look at $\gamma^j = (y_0^j = a_{j-1}, \dots , y_3^j = a_j)$ (here $n(j)=3$). The passage time $B^j_i$ is the length of the solid thick line included in the segment $[y_{i-1}^j , y_i^j]$. The passage time $A^j_i$ is the length of the solid line - thick and thin - included in the segment $[y_{i-1}^j , y_i^j]$. The difference between $A^j_i - B^j_i$ is thus the length of the thin solid line included in the segment $[y_{i-1}^j , y_i^j]$, if any, and is smaller than $M$.}
\label{f:fig3}
\end{figure}
\begin{itemize}
  \item $0<i-1$ and $i<n(j)$. In this case $A_i^j=B_i^j$.
  \item $0=i-1$ and $i<n(j)$. In this case, as the radii are bounded above by $M$, $A_i^j \le B_i^j + M$.
  \item $0<i-1$ and $i=n(j)$. In this case, as above, $A_i^j \le B_i^j + M$.
  \item $0=i-1$ and $i=n(j)$. In this case, $A_i^j \le B_i^j + 2M$.
\end{itemize}
Therefore, for each $j$,
\[
\sum_{i=1}^{n(j)} A^j_i \le 2M + \sum_{i=1}^{n(j)} B^j_i
\]
and then
\[
\sum_{j=1}^k \sum_{i=1}^{n(j)} A^j_i \le 2Mk + \sum_{j=1}^k \sum_{i=1}^{n(j)} B^j_i.
\]
The lemma follows.
\end{proof}

\begin{lemma} \label{l:prepare-BK} Let $\rho>0$. 
Let $\xi$ be a Poisson point process driven by a finite measure $\nu$ on $(0,\infty)$.
Let $M>0$. Assume, for all $c \in \chi, r(c) \le M$.
Let $\pi$ be a path from $0$ to a point outside $B(0,\rho)$.
Denote by $(a_0,\dots,a_k), k \ge 1$, its $\rho$-skeleton.
 Then 
 \[
 \widetilde\tau(\pi) \ge T^\square(a_0,\dots,a_k)  - 2Mk.
 \]
\end{lemma}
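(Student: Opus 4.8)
The plan is to deduce this from Lemma \ref{l:cutting} together with the elementary observation that the sub-paths joining consecutive skeleton points use pairwise disjoint families of balls of the Boolean model, which is exactly the configuration over which $T^\square$ is an infimum.

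First I would apply Lemma \ref{l:cutting}: since all radii are bounded by $M$, cutting $\pi$ at the times $t_1,\dots,t_k$ arising in the construction of $\pi_\rho$ and discarding the piece after $t_k$ produces sub-paths $\gamma^1,\dots,\gamma^k$, where $\gamma^j$ runs from $a_{j-1}$ to $a_j$, and
\[
\widetilde\tau(\pi) \ge \sum_{j=1}^k \widetilde\tau(\gamma^j) - 2Mk .
\]
It then remains to show $\sum_{j=1}^k \widetilde\tau(\gamma^j) \ge T^\square(a_0,\dots,a_k)$.

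For each $j$ I would let $\xi^j$ be the set of $(c,r(c))\in\xi$ such that $c$ is an interior vertex of $\gamma^j$. Each $\xi^j$ is finite, and since the vertices of $\pi$ are all distinct the families $\xi^1,\dots,\xi^k$ are pairwise disjoint subsets of $\xi$. The point is that $\widetilde\tau(\gamma^j)$ only involves the balls $B(c,r(c))$ with $c$ an interior vertex of $\gamma^j$ — by definition $Z^+_0(\gamma^j)=Z^-_{n(j)}(\gamma^j)=\emptyset$ — and an interior vertex with $r(c)=0$ contributes nothing; hence $\widetilde\tau(\gamma^j;\xi)=\widetilde\tau(\gamma^j;\xi^j)$. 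Applying Lemma \ref{l:good} to $\gamma^j$ for the process $\xi^j$, and using that $\gamma^j$ is a path from $a_{j-1}$ to $a_j$,
\[
\widetilde\tau(\gamma^j;\xi)=\widetilde\tau(\gamma^j;\xi^j)\ge \tau(\gamma^j;\xi^j)\ge T(a_{j-1},a_j;\xi^j).
\]
Summing over $j$ and using that the $\xi^j$ are disjoint finite subsets of $\xi$, the right-hand side is at least $T^\square(a_0,\dots,a_k)$ by the definition of $T^\square$; combined with the inequality coming from Lemma \ref{l:cutting}, this proves the lemma.

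I do not expect a serious obstacle here, since the statement is purely deterministic. The only point that needs a little care is the bookkeeping near the skeleton points: a point $a_j$ with $1\le j\le k-1$ may or may not be an original vertex of $\pi$, but in either case it is an endpoint, and hence not an interior vertex, of both $\gamma^j$ and $\gamma^{j+1}$, so the ball it may carry is never used in $\sum_j\widetilde\tau(\gamma^j)$; this loss, together with the boundary effects inherent in the definition of $\widetilde\tau$, is precisely what is absorbed by the term $2Mk$ furnished by Lemma \ref{l:cutting}.
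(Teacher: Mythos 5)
Your proposal is correct and follows essentially the same route as the paper: cut $\pi$ via Lemma \ref{l:cutting}, take $\xi^j$ to be the balls centred at the interior vertices of $\gamma^j$ (which are pairwise disjoint finite subsets of $\xi$), bound $\widetilde\tau(\gamma^j)\ge\tau(\gamma^j;\xi^j)\ge T(a_{j-1},a_j;\xi^j)$, and invoke the definition of $T^\square$. The only cosmetic difference is that you route the middle inequality through the identity $\widetilde\tau(\gamma^j;\xi)=\widetilde\tau(\gamma^j;\xi^j)$ and Lemma \ref{l:good}, whereas the paper cites the definitions of $\tau$ and $\widetilde\tau$ directly; both are valid.
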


\begin{proof} By Lemma \ref{l:cutting} we get
\[
 \widetilde\tau(\pi) \ge \sum_{j=1}^k \widetilde\tau\left(\gamma^j\right) - 2Mk.
\]
Let $j \in \{1,\dots,k\}$. Write 
\[
\gamma^j=\left(y^j_0,\dots,y^j_{n(j)}\right).
\]
Write
\[
\xi^j = \{ (y^j_i, r(y^j_i)), i \in \{1,\dots,n(j)-1\} \text{ such that }r(y^j_i)>0 \}.
\]
Note that we do not assume the paths to be regular, so there may be no ball centered at some $y^j_i$, this is why there is this condition "$r(y^j_i)>0$".
By definition of the paths, the $\xi^j$ are disjoint finite subsets of $\xi$.
By definition of $\tau$ and $\widetilde\tau$, for any $j \in \{1,\dots,k\}$,
\[
\widetilde\tau\left(\gamma^j\right) \ge \tau\left(\gamma^j ; \xi^j \right).
\]
Therefore
\[
\sum_{j=1}^k \widetilde\tau\left(\gamma^j\right)   \ge \sum_{j=1}^k \tau\left(\gamma^j ; \xi^j \right)
\]
and then
\[
\widetilde\tau(\pi) \ge  \sum_{j=1}^k \tau\left(\gamma^j ; \xi^j \right) -2Mk \ge \sum_{j=1}^k T\left(a_{j-1},a_j ;\xi^j \right) -2Mk
\]
By definition of $T^\square(a_0,\dots,a_k)$, the lemma follows.
\end{proof}

The following simple lemma is deterministic but we keep the notations of the random setting.
We define $\widetilde\tau(\cdot)$, $Z(\cdot)$ and so on as usual. We will use it in a deterministic setting 
and in a probabilistic setting where for example $\chi$ will be as usual the realization of a homogeneous Poisson point process on $\R^d$.

\begin{lemma} \label{l:modification-sigma} 
Let $\chi$ be a locally finite subset of $\R^d$.
Let $(r^-(c))_{c \in \chi}$ and $(r^+(c))_{c \in \chi}$ be two families of non-negative real numbers such that, for all $c$, $r^-(c) \le r^+(c)$.
Write $\xi^-=\{(c,r^-(c)), c \in \chi \}$ and $\xi^+=\{(c,r^+(c)), c \in \chi \}$.
Let $\pi=(x_0,\dots,x_n)$ be a path. Then
  \[
  \widetilde\tau(\pi,\xi^-) \le \widetilde\tau(\pi,\xi^+) + 2 \sum_{i \in \{1,\dots,n-1\}} \big(r^+(x_i)-r^-(x_i)\big).
  \]
\end{lemma}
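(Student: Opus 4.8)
The plan is to compare the two local travel times edge by edge along $\pi$. For a point process $\zeta=\{(c,r(c)),c\in\chi\}$ write $Z_i^-(\pi;\zeta)=[x_{i-1},x_i]\cap B(x_i,r(x_i))$ and $Z_i^+(\pi;\zeta)=[x_i,x_{i+1}]\cap B(x_i,r(x_i))$ for the sets of \eqref{e:Z}, with the convention $Z_0^+(\pi)=Z_n^-(\pi)=\emptyset$; I also extend the radii by $r^\pm(c)=0$ for $c\notin\chi$, which does not change $\widetilde\tau$. Since $Z_{i-1}^+(\pi;\zeta)$ and $Z_i^-(\pi;\zeta)$ are both subsets of the segment $[x_{i-1},x_i]$, the definition of $\widetilde\tau$ gives
\[
\widetilde\tau(\pi,\zeta)=\sum_{i=1}^n\Big(\|x_i-x_{i-1}\|-\ell\big(Z_{i-1}^+(\pi;\zeta)\cup Z_i^-(\pi;\zeta)\big)\Big).
\]
Applying this with $\zeta=\xi^-$ and $\zeta=\xi^+$ and subtracting, I get $\widetilde\tau(\pi,\xi^-)-\widetilde\tau(\pi,\xi^+)=\sum_{i=1}^n\delta_i$, where $\delta_i$ denotes the difference of the two removed-lengths on the $i$-th segment, and it remains to bound each $\delta_i$.

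Fix $i\in\{1,\dots,n\}$. From $r^-(c)\le r^+(c)$ one gets $B(c,r^-(c))\subseteq B(c,r^+(c))$ for every $c$, hence $Z_{i-1}^+(\pi;\xi^-)\subseteq Z_{i-1}^+(\pi;\xi^+)$ and $Z_i^-(\pi;\xi^-)\subseteq Z_i^-(\pi;\xi^+)$. Therefore the $\xi^-$-union is contained in the $\xi^+$-union and $\delta_i$ equals the length of their set difference, which is contained in
\[
\big(Z_{i-1}^+(\pi;\xi^+)\setminus Z_{i-1}^+(\pi;\xi^-)\big)\cup\big(Z_i^-(\pi;\xi^+)\setminus Z_i^-(\pi;\xi^-)\big).
\]
The first set is the part of $[x_{i-1},x_i]$ at distance in $[r^-(x_{i-1}),r^+(x_{i-1}))$ from $x_{i-1}$; parametrizing the segment by arc length from $x_{i-1}$, this part has length at most $r^+(x_{i-1})-r^-(x_{i-1})$. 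Likewise the second set has length at most $r^+(x_i)-r^-(x_i)$. Hence $\delta_i\le(r^+(x_{i-1})-r^-(x_{i-1}))+(r^+(x_i)-r^-(x_i))$, and moreover the $x_0$-contribution is absent when $i=1$ and the $x_n$-contribution is absent when $i=n$, since then $Z_0^+(\pi)$, resp. $Z_n^-(\pi)$, is empty for both processes.

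Finally I sum over $i$. By the last remark $x_0$ and $x_n$ never contribute, while each interior vertex $x_j$ ($1\le j\le n-1$) is counted exactly twice: once as the right endpoint in $\delta_j$ and once as the left endpoint in $\delta_{j+1}$, each time contributing $r^+(x_j)-r^-(x_j)$. Therefore
\[
\widetilde\tau(\pi,\xi^-)-\widetilde\tau(\pi,\xi^+)=\sum_{i=1}^n\delta_i\le 2\sum_{j=1}^{n-1}\big(r^+(x_j)-r^-(x_j)\big),
\]
which is the desired inequality. No step is really an obstacle: the only points requiring care are the bookkeeping of the endpoint conventions (so that $x_0$ and $x_n$ drop out and each interior vertex is counted twice) and the elementary geometric fact that a radial annulus of width $r^+(c)-r^-(c)$ meets a ray issued from $c$ in a set of length at most $r^+(c)-r^-(c)$.
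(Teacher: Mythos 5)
Your proof is correct and follows essentially the same route as the paper's: both write $\widetilde\tau$ as the total length minus the lengths removed on each segment, observe that the $\xi^+$-removed sets exceed the $\xi^-$-removed sets only by annular pieces of width at most $r^+(x_i)-r^-(x_i)$ around the interior vertices, and sum, each interior vertex contributing twice while $x_0$ and $x_n$ drop out by the convention $Z_0^+=Z_n^-=\emptyset$. No gap to report.
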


\begin{proof} 
For all $i \in \{1,\dots,n-1\}$, 
$Z^+_i(\pi ; \xi^+)$ is included in the union of $Z^+_i(\pi ; \xi^-)$ and of a segment of length at most $r^+(x_i)-r^-(x_i)$.
A similar statement holds for $Z^-_i$. 
Recall that $Z^+_0(\pi ; \cdot) = Z^-_n(\pi ; \cdot) = \emptyset$.
Therefore,
\begin{align*}
\widetilde\tau(\pi ; \xi^-)
 &  = \sum_{i=1}^n \ell\left([x_{i-1},x_i] \setminus \left( Z^+_{i-1}(\pi ; \xi^-) \cup Z^-_i(\pi ; \xi^-) \right)\right) \\
 & = \ell(\pi) - \sum_{i=1}^n \ell\left([x_{i-1},x_i] \cap \left( Z^+_{i-1}(\pi ; \xi^-) \cup Z^-_i(\pi ; \xi^-) \right)\right) \\
 & \le \ell(\pi) - \sum_{i=1}^n \ell\left([x_{i-1},x_i] \cap \left( Z^+_{i-1}(\pi ; \xi^+) \cup Z^-_i(\pi ; \xi^+) \right)\right) 
 + 2 \sum_{i \in \{1,\dots,n-1\}} \big(r^+(x_i)-r^-(x_i)\big) \\
 & = \sum_{i=1}^n \ell\left([x_{i-1},x_i] \setminus \left( Z^+_{i-1}(\pi ; \xi^+) \cup Z^-_i(\pi ; \xi^+) \right)\right) 
 + 2 \sum_{i \in \{1,\dots,n-1\}} \big(r^+(x_i)-r^-(x_i)\big) \\
 & = \widetilde\tau(\pi ; \xi^+) + 2 \sum_{i \in \{1,\dots,n-1\}} \big(r^+(x_i)-r^-(x_i)\big).
\end{align*}
The lemma is proven.
\end{proof}

 \paragraph{The $(\rho,\delta)$ enhanced skeleton.}
 Let $\rho, \delta>0$. 
 Let $\pi$ be a path starting at $0$.
 Let $(a_0,\dots,a_k)$ be its associated $\rho$-skeleton.
 We define a sequence $s_\rho^\delta$ and a path $\pi_\rho^\delta$. 
 The definition of these objects is in a sense artificial, since it is built to enable the use of results on greedy paths.
 For any $x \in \R^d$, write
 \[
 D(x) = \{c \in \chi :  \delta^{-1} r(c) \ge \|c-x\|  \text{ and }r(c) > \delta\rho\}.
 \]
\begin{itemize}
 \item The sequence $s^\delta_\rho$ starts from $a_0 = 0$ and visits each point of $D(a_0)$ it has not visited yet
 (in an order given by any specified rule), if any.
 \item Then it goes to $a_1$ and visits each point of $D(a_1)$ it has not visited yet, if any.
 \item $\dots$
 \item Then it goes to $a_k$ and visits each point of $D(a_k)$ it has not visited yet, if any.
\end{itemize}
Recall that a path is a sequence of {\em distinct} points of $\R^d$. 
The sequence $s^\delta_\rho$ may not be a path because when going to some $a_i$, the point may have already been visited.
With $s^\delta_\rho$ we associate the path $\pi^\delta_\rho$ by erasing from $s^\delta_\rho$ each point which also occurs earlier in the sequence.
In other words, we only keep in $\pi^\delta_\rho$ the first occurrence of each point.
Let us prove
\begin{equation}\label{e:length-enhanced}
\ell(\pi_\rho) \le \ell(s^\delta_\rho) \le \ell(\pi_\rho) + 2 \delta^{-1} r(\pi^\delta_\rho ; \xi^{>\delta\rho}).
\end{equation}
The first inequality of \eqref{e:length-enhanced} is trivial.
The second inequality can be proven as follows.
Consider the longer sequence
in which, in the definition,
"visits each point of $D(a_i)$ it has not visited yet" is replaced by "goes back and forth between $a_i$ and points of $D(a_i)$ it has not visited yet".
The length of each round-trip between a point $a_i$ and a point  $c \in D(a_i)$ is at most $2\delta^{-1} r(c)$.
As moreover each $c \in D(a_i)$ is such that $r(c)>\delta\rho$,
the total length of all the round-trips is at most $2\delta^{-1} r(\pi^\delta_\rho ; \xi^{>\delta\rho})$.
Therefore the length of this new sequence is at most
\[
\ell(\pi_\rho) + 2 \delta^{-1} r(\pi^\delta_\rho; \xi^{>\delta\rho}).
\]
As this sequence is longer that $s^\delta_\rho$ this establishes \eqref{e:length-enhanced}.

\begin{proof}[Proof of Proposition \ref{p:reduction}] We begin by throwing away large balls. Set 
\[
\xi^{\le \delta \rho} = \{(c, r(c)), c \in \chi \text{ such that } r(c) \le \delta\rho\}
\]
and
\[
\xi^{\wedge \delta \rho} = \{(c, \min(r(c),\delta\rho)), c \in \chi\}.
\]
Using Lemma \ref{l:modification-sigma} and greedy paths notations, we get for any path $\pi = (x_0,\dots , x_n)$,
\begin{align*}
\widetilde\tau(\pi;\xi^{\le \delta\rho})
 & \le \widetilde\tau(\pi) + 2 \sum_{i=0}^n \left(r(x_i) - r(x_i)\1_{r(x_i) \le \delta\rho}\right) \\
 & = \widetilde\tau(\pi) + 2 \sum_{i=0}^n r(x_i)\1_{r(x_i) > \delta\rho} \\
 & = \widetilde\tau(\pi) + 2 r(\pi ; \xi^{>\delta\rho}).
\end{align*}
Therefore
\begin{equation}\label{e:hobbit}
\widetilde\tau(\pi) \ge \widetilde\tau(\pi;\xi^{\le \delta\rho}) - 2 r(\pi; \xi^{>\delta\rho}).
\end{equation}

With the path $\pi=(x_0,\dots,x_n)$ we now associate the $\rho$ skeleton $\pi_\rho=(a_0,\dots,a_k)$ 
and the $(\rho,\delta)$ enhanced skeletons $s_\rho^\delta$ (which may not be a path) and $\pi_\rho^\delta$.
Note that $k$ is at least $1$ because $\|x_n\| \ge s \ge \rho$.
For any $i \in \{0,\dots,n\}$, there exists $j \in \{0,\dots,k\}$ such that $\|x_i -a_j \| \le \rho$.
If moreover $r(x_i)>\delta\rho$, then $\|x_i -a_j \| \le \delta^{-1} r(x_i)$.
Thus $x_i$ is one of the points of $\pi_\rho^{\delta}$.
Therefore
\begin{equation}\label{e:enhanced-skeleton-r}
r\left(\pi ; \xi^{>\delta\rho}\right) \le r\left(\pi_\rho^{\delta};  \xi^{>\delta\rho}\right).
\end{equation}
Regarding travel times, by Lemma \ref{l:prepare-BK}, we have
\begin{equation} \label{e:feu}
\widetilde\tau(\pi;\xi^{\le \delta\rho} ) \ge T^\square(a_0,\dots,a_k ;\xi^{\le \delta\rho})-2k\delta\rho.
\end{equation}
By \eqref{e:hobbit},  \eqref{e:enhanced-skeleton-r} and \eqref{e:feu} we then get
\begin{equation}\label{e:hobbit2}
\widetilde\tau(\pi) \ge T^\square(a_0,\dots,a_k ;\xi^{\le \delta\rho})-2k\delta\rho - 2 r\left(\pi_\rho^{\delta};  \xi^{>\delta\rho}\right).
\end{equation}
Therefore
\begin{align*}
 \frac{\widetilde\tau(\pi)}{\ell(\pi_\rho)} 
& \ge 
\frac
{\widetilde\tau(\pi)}
{\ell\left(s_\rho^{\delta}\right)} 
\text{ by }  \eqref{e:length-enhanced},\\
& = 
\frac
{\widetilde\tau(\pi)+(2+2\delta^{-1})r\left(\pi_\rho^{\delta};  \xi^{>\delta\rho}\right)}
{\ell\left(s_\rho^{\delta}\right)} 
- 
\frac
{(2+2\delta^{-1})r\left(\pi_\rho^{\delta};  \xi^{>\delta\rho}\right)}
{\ell\left(s_\rho^{\delta}\right)}  \\
& \ge 
\frac
{\widetilde\tau(\pi)+(2+2\delta^{-1})r\left(\pi_\rho^{\delta};  \xi^{>\delta\rho}\right)}
{\ell\left(\pi_\rho\right)+2\delta^{-1}r\left(\pi_\rho^{\delta};  \xi^{>\delta\rho}\right)} 
- 
\frac
{(2+2\delta^{-1})r\left(\pi_\rho^{\delta};  \xi^{>\delta\rho}\right)}
{\ell\left(\pi_\rho^{\delta}\right)}
 \text{ by } \eqref{e:length-enhanced} \text{ and as } \ell\left(s_\rho^{\delta}\right) \ge \ell\left(\pi_\rho^{\delta}\right)\\
& \ge 
\frac
{T^\square(a_0,\dots,a_k ;\xi^{\le \delta\rho})-2k\delta\rho+ 2 \delta^{-1}r\left(\pi_\rho^{\delta}; \xi^{>\delta\rho}\right)}
{\ell\left(\pi_\rho\right)+2\delta^{-1}r\left(\pi_\rho^{\delta};  \xi^{>\delta\rho}\right)} 
- 
\frac
{(2+2\delta^{-1})r\left(\pi_\rho^{\delta};  \xi^{>\delta\rho}\right)}
{\ell\left(\pi_\rho^{\delta}\right)} \text{ by } \eqref{e:hobbit2}.
\end{align*}
As 
\[
T^\square(a_0,\dots,a_k ;\xi^{\le \delta\rho})-2k\delta\rho  \le T^\square(a_0,\dots,a_k ;\xi^{\le \delta\rho}) \le k\rho = \ell(\pi_\rho)
\]
we have\footnote{We use the fact that $(a+c)/(b+c) \ge a/b$ for any $a \le b$
and any $c \ge 0$ with $b>0$.}
\begin{align*}
 \frac{\widetilde\tau(\pi)}{\ell(\pi_\rho)} 
 & \ge 
 \frac
 {T^\square(a_0,\dots,a_k ;\xi^{\le \delta\rho})-2k\delta\rho }
 {\ell\left(\pi_\rho\right)}   
 - \frac
 {(2+2\delta^{-1}) r\left(\pi_\rho^{\delta}; \xi^{>\delta\rho}\right)}
 {\ell\left(\pi_\rho^{\delta}\right)}.
\end{align*}
Using \eqref{e:length-skeleton} we get $\ell(\pi_\rho)=k\rho$ and $k \ge \lfloor s/\rho \rfloor$.
Note that $\pi_\rho$, and therefore $\pi^\delta_\rho$, is not contained in $B(0,s-\rho)$ (this is because $\pi$ is not contained in $B(0,s)$).
Using skeleton notations and greedy paths notations, we get
\begin{align*}
 \frac{\widetilde\tau(\pi)}{\ell(\pi_\rho)} 
& \ge \inf \left\{ \frac 1 {k \rho} T^\square\left(a_0,\dots,a_k;\xi^{\le \delta\rho} \right) :  k \ge \lfloor s/\rho \rfloor \text{ and } (a_0,\dots,a_k) \text{ is a }\rho\text{-skeleton}\right\}  - (2+2\delta^{-1}) G(s-\rho; \xi^{>\delta\rho}) -2\delta\\
& \ge \inf \left\{ \frac 1 {k \rho} T^\square\left(a_0,\dots,a_k;\hat{\xi} \right) :  k \ge \lfloor s/\rho \rfloor \text{ and } (a_0,\dots,a_k) \text{ is a }\rho\text{-skeleton}\right\}  - (2+2\delta^{-1}) G(s-\rho; \hat{\xi}^{>\delta\rho}) -2\delta.
\end{align*}
The proposition is proven.
\end{proof}

\subsection{Control of $T^\square$}
\label{s:Tsquare}

We need a simple lemma to define the next object.

\begin{lemma} \label{l:definition-mu-square} 
Let $\xi$ be a Poisson point process driven by a finite measure $\nu$ on $(0,\infty)$. The dependence in $\xi$ is implicit in what follows.
Assume $\int_{(0,+\infty)} r^d \nu (\d r) < \infty$.
Then, with probability one,
\begin{equation}\label{e:musquare}
\mu^\square(\rho,\nu) : =  \lim_{k_0 \to \infty} \inf \left\{ \frac 1 {k \rho} T^\square(a_0,\dots,a_k) :  k \ge k_0 \text{ and } (a_0,\dots,a_k) \text{ is a }\rho\text{-skeleton from }0\right\}.
\end{equation}
exists and is deterministic.
\end{lemma}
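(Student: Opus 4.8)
The plan is to show first that the limit defining $\mu^\square(\rho,\nu)$ exists, by a soft monotonicity argument, and then that it is almost surely constant, by combining the invariance of $\mu^\square$ under \emph{all} translations with the ergodicity of $\xi$ under the translation group. (As in the measurability discussion referenced in the paper, I take for granted that $T^\square$ and the infima below are genuine random variables.)

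\emph{Existence.} For an integer $k_0 \ge 1$ set
\[
f(k_0) = \inf\left\{ \tfrac{1}{k\rho}\, T^\square(a_0,\dots,a_k) : k \ge k_0,\ (a_0,\dots,a_k)\text{ is a }\rho\text{-skeleton from }0\right\}.
\]
As $k_0$ grows the infimum runs over a smaller family, so $k_0 \mapsto f(k_0)$ is non-decreasing. It is $\ge 0$ since travel times are non-negative, and $\le 1$ since the choice $\xi^1 = \dots = \xi^k = \emptyset$ in the definition of $T^\square$ gives $T^\square(a_0,\dots,a_k) \le \sum_{j=1}^k \|a_j - a_{j-1}\| = k\rho$. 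Hence $\mu^\square(\rho,\nu) := \lim_{k_0 \to \infty} f(k_0) = \sup_{k_0} f(k_0)$ exists and lies in $[0,1]$.

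\emph{Invariance under translations.} For $x \in \R^d$, call a sequence $(b_0, \dots, b_k)$ with $b_0 = x$ and $\|b_j - b_{j-1}\| = \rho$ for all $j$ a $\rho$-skeleton from $x$; since $T^\square(b_0,\dots,b_k)$ is defined for any such sequence, one may define $f_x(k_0)$ and $\mu^\square_x = \lim_{k_0} f_x(k_0)$ by copying the previous paragraph with skeletons from $x$ in place of skeletons from $0$, so that $\mu^\square_0 = \mu^\square(\rho,\nu)$. I claim that $\mu^\square_x = \mu^\square_0$ for every $x$, surely. The case $x = 0$ is trivial, so assume $x \ne 0$. Since $d \ge 2$, there are an integer $m = m(x,\rho) \ge 1$ and a $\rho$-skeleton $(0 = c_0, c_1, \dots, c_{m-1}, x)$ from $0$ ending at $x$. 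For any $\rho$-skeleton $(x = b_0, b_1, \dots, b_k)$ from $x$, the concatenation $(c_0, \dots, c_{m-1}, x, b_1, \dots, b_k)$ is a $\rho$-skeleton from $0$ of length $m + k$, and assigning empty obstacle sets to its first $m$ steps — which keeps the whole family of obstacle sets pairwise disjoint — and a near-optimal disjoint family to the remaining $k$ steps gives
\[
T^\square(c_0, \dots, c_{m-1}, x, b_1, \dots, b_k) \le m\rho + T^\square(x, b_1, \dots, b_k).
\]
Dividing by $(m+k)\rho$, using $\tfrac{m}{m+k} \le \tfrac{m}{m+k_0}$ and $\tfrac{1}{m+k} \le \tfrac{1}{k}$, and taking the infimum over all $\rho$-skeletons from $x$ with $k \ge k_0$ yields $f(m+k_0) \le \tfrac{m}{m+k_0} + f_x(k_0)$; letting $k_0 \to \infty$ gives $\mu^\square_0 \le \mu^\square_x$. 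Exchanging the roles of $0$ and $x$ gives the reverse inequality, proving the claim.

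\emph{Conclusion.} Let $\theta_x$ denote translation by $x$, acting on configurations on $\R^d \times (0,+\infty)$ through the first coordinate. Travel times satisfy $T(a,b;\theta_x\zeta) = T(a+x,b+x;\zeta)$ and translations preserve disjointness of obstacle sets, so $T^\square(a_0,\dots,a_k;\theta_x\xi) = T^\square(a_0+x,\dots,a_k+x;\xi)$; by the claim this gives $\mu^\square(\rho,\nu;\theta_x\xi) = \mu^\square_x(\xi) = \mu^\square(\rho,\nu;\xi)$ for every $x$ and every realization of $\xi$. Thus $\mu^\square(\rho,\nu)$ is invariant under the whole translation group, and since the law of the Poisson process $\xi$ is ergodic (indeed mixing) under that group — see, e.g., \cite{Meester-Roy-livre} — the random variable $\mu^\square(\rho,\nu)$ is almost surely equal to a constant, as desired. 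The only mildly delicate point is the interaction between the disjointness constraint defining $T^\square$ and the concatenation of skeletons, which is handled by putting empty obstacle sets on the connecting segment; alternatively, one can bypass the invariance argument by noting that adding or removing a single ball $B(c,r)$ changes any $T^\square(a_0,\dots,a_k)$ by at most $2r$ (a geodesic meets $B(c,r)$ along a single chord), so $\mu^\square(\rho,\nu)$ is insensitive to the configuration inside any bounded window $B(0,R) \times (0,+\infty)$, which a.s. contains finitely many points, and is therefore measurable with respect to the trivial tail field $\bigcap_R \sigma\big(\xi \cap (B(0,R)^c \times (0,+\infty))\big)$.
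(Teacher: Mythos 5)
Your proof is correct and follows essentially the same route as the paper: existence by monotonicity and boundedness of the infima, then almost-sure constancy by showing that the normalized infimum is (asymptotically) unchanged under a shift of the base point at a cost that vanishes after normalization, and invoking ergodicity of the Poisson process under translations (the paper does this with the single shift $\rho e_1$, prepending one extra skeleton step, where you prepend a connecting $\rho$-skeleton from $0$ to $x$ and use the full translation group). Your closing alternative via insensitivity to finitely many balls and the tail $0$--$1$ law is also sound, and mirrors the argument the paper uses for $G(\infty)$ in Corollary \ref{c:greedy}.
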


\begin{proof}
Note that $k \rho$ is the total Euclidean length of the sequence $(a_0,\dots,a_k)$.
Therefore the random sequence
\[
k_0 \mapsto \inf \left\{ \frac 1 {k \rho} T^\square(a_0,\dots,a_k) :  k \ge k_0 \text{ and } (a_0,\dots,a_k) 
\text{ is a }\rho\text{-skeleton from }0\right\} =: I(k_0)
\]
takes values in $[0,1]$.
It is moreover measurable\footnote{See Section \ref{s:mesurabilite} in Appendix for remarks on the measurability of $I$.} and non-decreasing.
Therefore it converges when $k_0$ tend to $\infty$ toward a measurable limit that we denote by $\ell(\xi)$.

Let $e_1$ denote the first vector of the canonical basis of $\R^d$.
Let $k_0 \ge 1$.
For all $\rho$-skeleton $(a_0,\dots,a_k)$ from $0$ with $k \ge k_0$ we have, 
translating everything by $-\rho e_1$ in the first step\footnote{
We extend in a natural way the definition of $\rho$-skeleton to skeletons $(a_0,\dots, a_k)$ starting from a point $a_0 \neq 0$, and the definition of $T^\square (a_0,\dots , a_k)$ for such skeletons. 
}
and expliciting the dependence on 
the point process,
\begin{align*}
\frac 1 {k \rho} T^\square(a_0,\dots,a_k ; \xi) 
 & = \frac 1 {k \rho} T^\square(a_0-\rho e_1,\dots,a_k-\rho e_1 ; \xi-\rho e_1) \\
 & \ge \frac 1 {k \rho} \big(T^\square(a_0,a_0-\rho e_1,\dots,a_k-\rho e_1 ; \xi-\rho e_1) - \rho\big) \\
 & =  \frac 1 {(k +1)\rho} T^\square(a_0,a_0-\rho e_1,\dots,a_k-\rho e_1 ; \xi-\rho e_1) - \frac 1 k \\
 & \ge  I(k_0 ; \xi -\rho e_1)  - \frac 1 {k_0}.
\end{align*}
Taking infimum over all $k \ge k_0$ and all  $\rho$-skeleton $(a_0,\dots,a_k)$ from $0$ we get 
\[
I(k_0 ; \xi) \ge  I(k_0 ; \xi -\rho e_1)- \frac 1 {k_0}.
\]
By symmetry we have the reverse inequality and thus
\[
|I(k_0 ; \xi) - I(k_0 ; \xi -\rho e_1)| \le \frac 1 {k_0}.
\]
As a consequence, taking limit when $k_0$ tends to infinity, we have $\ell(\xi)=\ell(\xi-\rho e_1)$.
By ergodicity, $\ell$ is therefore constant almost surely.
\end{proof}

\begin{prop} \label{p:travel-time-skeleton} 
Let $\xi$ be a Poisson point process driven by a finite measure $\nu$ on $(0,\infty)$. 
Assume $\int_{(0,+\infty)} r^d \nu (\d r) < \infty$.
Denote by $\mu(\nu)$ the associated time constant.
Then 
\[
\liminf_{\rho \to +\infty} \mu^\square(\rho,\nu) \ge \mu(\nu).
\]
\end{prop}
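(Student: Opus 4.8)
The plan is to dispose of the trivial case $\mu(\nu)=0$ first: there $\mu^\square(\rho,\nu)\ge 0$ always (taking all the $\xi^j$ empty and using the straight segments shows $T^\square(a_0,\dots,a_k)\le k\rho$), so the inequality is automatic. Assume therefore $\mu(\nu)>0$ and fix $\epsilon\in(0,\mu(\nu))$. By Lemma \ref{l:definition-mu-square} and the fact that $k_0\mapsto I(k_0)$ is non-decreasing, the bound $\mu^\square(\rho,\nu)\ge\mu(\nu)-\epsilon$ is equivalent to the statement that, almost surely, there is a $k_0$ such that $T^\square(a_0,\dots,a_k)\ge(\mu(\nu)-\epsilon)k\rho$ for every $\rho$-skeleton $(a_0,\dots,a_k)$ from $0$ with $k\ge k_0$. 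I would prove this, for $\rho$ large enough, by a Borel--Cantelli argument on $k$, so that it remains to bound $\P[\exists\ \rho\text{-skeleton of length }k:\ T^\square<(\mu(\nu)-\epsilon)k\rho]$ by a summable sequence.

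The first step is discretisation. From the elementary Lipschitz bound $|T(x,y;\omega)-T(x',y';\omega)|\le\|x-x'\|+\|y-y'\|$, and hence $|T^\square(a_0,\dots,a_k)-T^\square(\bar a_0,\dots,\bar a_k)|\le\sum_j(\|a_{j-1}-\bar a_{j-1}\|+\|a_j-\bar a_j\|)$, I would round every skeleton point to a lattice of mesh a small multiple of $\epsilon\rho$: the error incurred on $T^\square$ is then at most $\tfrac\epsilon2 k\rho$, and the number of lattice skeletons of length $k$ is at most $C(d,\epsilon)^k$. Thus it suffices to show that, for a single (lattice) $\rho$-skeleton $(a_0,\dots,a_k)$ and $\rho$ large, $\P[T^\square(a_0,\dots,a_k)<(\mu(\nu)-\tfrac\epsilon2)k\rho]\le(2C(d,\epsilon))^{-k}$.

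For a fixed skeleton the heart of the matter is to exploit the disjointness built into $T^\square$. Since ``$T(x,y;\cdot)<s$'' is an increasing event witnessed by the finitely many balls used by a near-optimal path (this uses the finite-range remarks of Section \ref{s:skeleton}), the event $\{T^\square(a_0,\dots,a_k)<S\}$ is contained, after discretising the thresholds to multiples of a small multiple of $\epsilon\rho$, in a finite union of disjoint occurrences $\{T(a_0,a_1;\cdot)<s_1\}\circ\cdots\circ\{T(a_{k-1},a_k;\cdot)<s_k\}$ with $\sum_j s_j\le S+\tfrac\epsilon8 k\rho$, and there are at most $K_\epsilon^{\,k}$ such choices of $(s_j)$. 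The van den Berg--Kesten inequality in the Poisson setting gives $\P[\text{such a disjoint occurrence}]\le\prod_{j=1}^k\P[T(a_{j-1},a_j)<s_j]$, and a Chernoff bound yields $\prod_j\P[T(a_{j-1},a_j)<s_j]\le e^{t(S+\epsilon k\rho/8)}\prod_j\E\big[e^{-tT(a_{j-1},a_j)}\big]$ for every $t>0$. By translation invariance and isotropy, $T(a_{j-1},a_j)\overset{d}{=}T(0,\rho e_1)\in[0,\rho]$; by \eqref{e:nuinfimum} together with Theorem \ref{t:timeconstant} one gets $\E[T(0,\rho e_1)]\ge\mu(\nu)\rho$, and the same theorem gives $q(\rho):=\P[T(0,\rho e_1)<(\mu(\nu)-\tfrac\epsilon4)\rho]\to0$ as $\rho\to\infty$, whence $\E[e^{-tT(0,\rho e_1)}]\le e^{-t(\mu(\nu)-\epsilon/4)\rho}+q(\rho)$. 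Taking $t=L/\rho$ and $S=(\mu(\nu)-\tfrac\epsilon2)k\rho$, the bound becomes $\big[K_\epsilon\,(e^{-L\epsilon/8}+q(\rho)\,e^{L\mu(\nu)})\big]^k$ up to harmless constants; choosing first $L=L(d,\epsilon)$ large enough, then $\rho$ large enough that $q(\rho)$ is tiny, makes the bracket at most $(2C(d,\epsilon))^{-1}$, which is exactly what is needed. Summing over $k$, Borel--Cantelli applies, and letting $\epsilon\to0$ gives $\liminf_{\rho\to\infty}\mu^\square(\rho,\nu)\ge\mu(\nu)$.

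The step I expect to be the main obstacle is the disjoint-occurrence / van den Berg--Kesten part: one must check carefully that ``$T(x,y;\cdot)<s$'' is an increasing event supported on a finite (random) set of balls --- which requires restricting to geodesics of controlled spatial extent, in the spirit of Lemmas \ref{l:good-geodesic-cas-fini}--\ref{l:good-geodesic-pi} --- that $\{T^\square<S\}$ is genuinely the announced union of disjoint occurrences (the $\xi^j$ in the definition of $T^\square$ being finite and pairwise disjoint), and that the Poisson version of the BK inequality applies with constants uniform in $k$. Everything else --- the Lipschitz discretisation of skeletons and thresholds, the inequality $\E[T(0,\rho e_1)]\ge\mu(\nu)\rho$, and the Chernoff computation --- is routine.
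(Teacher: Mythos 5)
Your overall scheme is the one the paper follows: fix $\epsilon$, treat $\mu(\nu)=0$ trivially, discretize the $\rho$-skeletons onto a lattice of mesh of order $\epsilon\rho$ (the paper rounds each $a_j$ to $b_j\in\alpha\eta\rho\Z^d$ and counts at most $C(\epsilon,d)^{k-1}$ discretized skeletons), decouple the $k$ segment times with the Poisson BK inequality of Lemma \ref{l:bk}, run an exponential Chernoff bound whose only probabilistic input is that $\P[T/\rho<\mu(\nu)-\epsilon']\to0$ (from Theorem \ref{t:timeconstant}), choose first the Chernoff parameter and then $\rho$ large, and conclude by Borel--Cantelli in $k$. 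The Lipschitz bound for $T^\square$ that you use in the discretization step is indeed available (it is proved in Section \ref{s:mesurabilite}), and your threshold-discretization variant of the union over rational thresholds used in the paper is harmless.

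The genuine gap is exactly at the step you flag, and the remedy you sketch does not work. You apply BK to the events $\{T(a_{j-1},a_j;\cdot)<s_j\}$, but Lemma \ref{l:bk} is proved only for a Poisson process with \emph{finite} intensity measure, so one must first check that the events in play depend only on the restriction of $\xi$ to a region of finite $\d c\otimes\nu$-mass (in the paper, $U=\{(c,r):\|c\|\le r+M\}$, finite by \eqref{e:momentd}). For point-to-point times this locality fails: a near-optimal path from $a_{j-1}$ to $a_j$ may use balls arbitrarily far away, reached at zero cost through a chain of overlapping balls, so $\{T(a_{j-1},a_j;\cdot)<s_j\}$ is not determined by the balls meeting any fixed bounded set. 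Lemmas \ref{l:good-geodesic-cas-fini}--\ref{l:good-geodesic-pi} give existence of good geodesics, not spatial confinement, and no deterministic confinement of near-optimal paths is available at this stage (quantitative control of geodesics is precisely the hard content of Theorem \ref{t:controle-l}, which cannot be used here). The paper's fix is to lower bound $T(a_{j-1},a_j;\xi^j)$ by the crossing time $T\big(B(b_j,\eta\rho),B(b_j,\rho-\eta\rho)^c;\xi^j\big)$: for such a crossing the infimum may be taken over paths staying inside $B(b_j,\rho-\eta\rho)$, so the event is local and BK applies after restriction to $U$; the price is the extra Claim \ref{c:commedansm1}, which shows $T\big(B(0,\eta\rho),B(0,\rho(1-\eta))^c\big)/\rho\to\mu(\nu)(1-2\eta)$ and replaces your use of $T(0,\rho e_1)$. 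Alternatively, your version can be salvaged by exhausting $\xi$ by its restrictions $\xi_{U_n}$ to balls touching $B(0,n)$: any witnessing disjoint finite subsets lie in some $\xi_{U_n}$, so your event is an increasing union over $n$; apply BK at each finite level and use the monotonicity $T(\cdot\,;\xi_{U_n})\ge T(\cdot\,;\xi)$ to compare with independent copies of the unrestricted times. One of these repairs is needed; as written, the BK step is not justified. The rest of your computation matches the paper's proof and is fine.
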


\begin{proof}
Let $\epsilon>0$. We aim at proving that, for any $\rho>0$ large enough,
\[
\mu^\square(\rho,\nu) \ge \mu(\nu) - \epsilon.
\]
Let us assume $\mu(\nu)-\epsilon > 0$, otherwise this is straightforward.   
Write $\eta=\epsilon/8$.
We then have, as $\mu(\nu) \le 1$, 
\[
\mu(\nu)(1-2\eta) > \mu(\nu)-\epsilon/2. 
\]
Let $\alpha=\alpha(d)>0$ be such that
\[
\R^d = \bigcup_{x \in \alpha \Z^d} B(x,1).
\]
We then have, for any $\rho>0$,
\[
\R^d = \bigcup_{x \in \alpha \eta \rho \Z^d} B(x,\eta\rho).
\]
Define $C=C(\epsilon,d)$ by
\[
C=\card\big[\big(B(0,1+2\eta) \setminus B(0,1-2\eta) \big)\cap \alpha \eta \Z^d\big].
\]
We then have, for any $\rho>0$,
\[
C = \card\big[ \big(B(0,\rho(1+2\eta)) \setminus B(0,\rho(1-2\eta)) \big)\cap \alpha\eta\rho  \Z^d\big].
\]

Let $\rho>0$.
Let $(a_0,\dots,a_k)$ be a $\rho$-skeleton such that $a_0=0$. 
For any $j \in \{0,\dots,k-1\}$, there exists $b_j \in \alpha\eta\rho\Z^d$ such that 
\begin{equation} \label{e:ouestaj}
a_j \in B(b_j,\eta\rho).
\end{equation}
For $j=0$ we choose $b_0=0$.
Moreover, for any such $j$, as $\|a_{j+1}-a_j\|=\rho$,
\begin{equation} \label{e:ouestajp1}
a_{j+1} \in B(b_j,\rho-\eta\rho)^c
\end{equation}
and
\begin{equation} \label{e:ecartbjbjp1}
\rho(1-2\eta) \le \|b_{j+1}-b_j\| \le \rho(1+2\eta).
\end{equation}
By \eqref{e:ouestaj} and \eqref{e:ouestajp1} we get
\begin{align*}
T^\square(a_0,\dots,a_k) 
 & = \inf \left\{ \sum_{j=0}^{k-1} T\left(a_{j},a_{j+1};\xi^j\right) : \xi^0, \dots, \xi^{k-1} \text{ are disjoint finite subsets of } \xi \right\} \\
 & \ge \inf \left\{ \sum_{j=0}^{k-1} T\left(B(b_{j},\eta\rho),B(b_{j},\rho-\eta\rho)^c;\xi^j\right) : \xi^0, \dots, \xi^{k-1} 
 \text{ are disjoint finite subsets of } \xi \right\}.
\end{align*}
Therefore
\begin{align}
\label{e:ajoutM}
& \P\left[ \inf \left\{T^\square(a_0,\dots,a_k) : (a_0,\dots,a_k) \; \rho\text{-skeleton from }0\right\} < k\rho (\mu(\nu)-\epsilon) \right] \nonumber \\
& \le  \sum_{b_0,\dots,b_{k-1}} \P\left[ \inf \left\{ \sum_{j=0}^{k-1} T\left(B(b_{j},\eta\rho),B(b_{j},\rho-\eta\rho)^c;\xi^j\right) : 
\xi^0, \dots, \xi^{k-1} \text{ are disjoint finite subsets of } \xi \right\} < k\rho (\mu(\nu)-\epsilon)\right]
\end{align}
where the sum is taken over all $(b_0,\dots,b_{k-1})$ such that $b_0=0$ and, for all $j \in \{1,\dots,k-1\}$,
\[
b_j - b_{j-1} \in \big(B(0,\rho(1+2\eta)) \setminus B(0,\rho(1-2\eta)) \big)\cap \alpha\eta\rho \Z^d
\]
whose cardinality is $C$.	
There are therefore at $C^{k-1}$ such sequences.

\begin{claim} \label{c:bk} For all such $(b_0,\dots,b_{k-1})$,
\begin{align*}
&  \P\left[ \inf \left\{ \sum_{j=0}^{k-1} T\left(B(b_{j},\eta\rho),B(b_{j},\rho-\eta\rho)^c;\xi^j\right) : 
\xi^0, \dots, \xi^{k-1} \text{ are disjoint finite subsets of } \xi \right\} < k\rho (\mu(\nu)-\epsilon)\right] \\
& \le \P\left[\sum_{j=0}^{k-1} T^{(j)} < k\rho (\mu(\nu)-\epsilon) \right]
\end{align*}
where the $T^{(j)}$ are independent copies of 
$
T\left(B(0,\eta\rho),B(0,\rho-\rho\eta)^c\right).
$
\end{claim}

Let us prove the claim.  
This is a classical application of BK inequality. 
We follow the same plan as the proof of (4.13) in Saint-Flour lecture notes by Kesten \cite{Kesten-saint-flour}. 
Let $b_0,\dots,b_{k-1} \in \R^d$.
For each $j \in \{0,\dots,k-1\}$, 
the travel time from $B(b_{j},\eta\rho)$ to $B(b_{j},\rho-\eta\rho)^c)$ only depends on random balls which touches $B(b_{j},\rho-\eta\rho)$.
This is due to the fact that the infimum wich defines the travel time can be taken over all paths entirely inside $B(b_{j},\rho-\eta\rho)$.
Set $M=\max(\|b_0\|,\dots,\|b_{k-1}\|)+\rho$.
By the previous remark, the event on the left-hand side of the claim only depends on restriction $\xi_U$ of $\xi$ to the set 
\[
U=\{(c,r) \in \R^d \times (0,+\infty) : B(c,r) \text{ touches } B(0,M)\} = \{(c,r) \in \R^d \times (0,+\infty) : \|c\| \le r+M\}.
\]
As $\int r^d \nu(\d r)$ is finite, $\d c \otimes \nu(\d r) (U)$ is finite.

Set
\[
\Lambda = \{(t_0,\dots,t_{k-1}) \in \Q^k : t_0+\dots+t_{k-1} < k\rho (\mu(\nu)-\epsilon)\}.
\]
For all $j \in \{0,\dots,k-1\}$ and all $(t_0,\dots,t_{k-1}) \in \Q^k$ we set
\[
F^t_j = \big\{ \text{finite subsets } S \subset \R^d \times (0,+\infty) : T\big(B(b_j	,\eta\rho),B(b_j,\rho-\eta\rho)^c ; S\big) < t_j\big\}.
\]
Thanks to the remark above and with the notations of Section \ref{s:bk} in Appendix, we can thus write
\begin{align*}
&  \P\left[ \inf \left\{ \sum_{j=0}^{k-1} T\left(B(b_{j},\eta\rho),B(b_{j},\rho-\eta\rho)^c;\xi^j\right) : 
\xi^0, \dots, \xi^{k-1} \text{ are disjoint finite subsets of } \xi \right\} < k\rho (\mu(\nu)-\epsilon)\right] \\
&  \le \P\left[ \inf \left\{ \sum_{j=0}^{k-1} T\left(B(b_{j},\eta\rho),B(b_{j},\rho-\eta\rho)^c;\xi^j\right) : 
\xi^0, \dots, \xi^{k-1} \text{ are disjoint finite subsets of } \xi_U \right\} < k\rho (\mu(\nu)-\epsilon)\right] \\
& \le \P\left[ \bigcup_{(t_0,\dots,t_{k-1}) \in \Lambda} \quad \bigcup_{\xi^0, \dots, \xi^{k-1} \text{ disjoint finite subsets of } \xi_U}  \quad
\bigcap_{j=0}^{k-1} \left\{ T\left(B(b_{j},\eta\rho),B(b_{j},\rho-\eta\rho)^c;\xi^j\right) < t_j \right\}\right] \\
& \le \P\left[\bigcup_{(t_0,\dots,t_{k-1}) \in \Lambda} \{\xi_U \in F^t_0 \circ \cdots \circ F^t_{k-1} \}\right].
\end{align*}
Using Lemma \ref{l:bk}, that is BK inequality (see Section \ref{s:bk} in Appendix), with $p=d+1$, $m=k$, $\varpi$ the restriction of $\d c \otimes \nu(\d r)$ to $U$ we get,
denoting by $\xi^0_U, \dots, \xi^{k-1}_U$ independent copies of $\xi_U$,
\begin{align*}
\P\left[\bigcup_{(t_0,\dots,t_{k-1}) \in \Lambda} \{\xi_U \in F^t_0 \circ \cdots \circ F^t_{k-1} \}\right] 
& \le \P\left[\bigcup_{(t_0,\dots,t_{k-1}) \in \Lambda} \{\xi^0_U \in F^t_0, \dots, \xi^{k-1}_U \in F^t_{k-1} \}\right] \\
& \le  \P\left[\bigcup_{(t_0,\dots,t_{k-1}) \in \Lambda}\bigcap_{j=0}^{k-1} \{T\big(B(b_j	,\eta\rho),B(b_j,\rho-\eta\rho)^c ; \xi^j_U\big) < t_j\}\right] \\
& \le  \P\left[\sum_{j=0}^{k-1} \{T\big(B(b_j	,\eta\rho),B(b_j,\rho-\eta\rho)^c ; \xi^j_U\big) < k\rho (\mu(\nu)-\epsilon)\right].
\end{align*}
The claim now easily follows by stationarity and we resume the proof of Proposition \ref{p:travel-time-skeleton}. 

By Claim \ref{c:bk} and Inequality \eqref{e:ajoutM}
we get, for any $\beta>0$,
\begin{align*}
&\hspace{-2cm} \P\left[ \inf \left\{T^\square(a_0,\dots,a_k) : (a_0,\dots,a_k) \; \rho\text{-skeleton from }0\right\} < k\rho (\mu(\nu)-\epsilon) \right]  \\
&\le  C^{k-1}\P\left[\frac 1 \rho \sum_{j=0}^{k-1} T^{(j)} \le k (\mu(\nu)-\epsilon) \right] \\
&\le  C^{k-1}\E\left[e^{\beta  \sum_{j=0}^{k-1} \left( \mu(\nu)-\epsilon -  \frac {T^{(j)}} {\rho }\right) } \right] \\
&\le  \left(C\E\left[e^{\beta  	\left(\mu(\nu)-\epsilon -  \frac {T^{(0)}} {\rho }\right) } \right]\right)^k \\
&\le  \left(Ce^{\beta(\mu(\nu)-\epsilon)} \P\left[\frac{T^{(0)}}\rho \le \mu(\nu)-\epsilon/2\right]+Ce^{-\beta\epsilon/2}\right)^k \\
&\le  \left(Ce^{\beta} \P\left[\frac{T^{(0)}}\rho \le \mu(\nu)-\epsilon/2\right]+Ce^{-\beta\epsilon/2}\right)^k 
\end{align*}
as $\mu(\nu) \le 1$.
Let $\beta=\beta(\epsilon,d)$ be such that
\[
Ce^{-\beta\epsilon/2} \le \frac 1 4.
\]

\begin{claim}\label{c:commedansm1}
\[
\frac{T^{(0)}}\rho \to \mu(\nu)(1-2\eta)
\]
almost surely as $\rho \to \infty$.
\end{claim}
Let us prove the claim. Any path from $0$ to $S(\rho(1-\eta))$, the sphere of radius $\rho(1-\eta)$ centered at $0$,
can be seen as the concatenation of a first path from $0$ to $S(\rho\eta)$ and a second path from $S(\rho\eta)$ to $S(\rho(1-\eta))$.
Taking infimums, we get
\begin{equation}\label{e:vmc1}
T\big(0,S(\rho\eta)\big) + T\big(S(\rho\eta),S(\rho(1-\eta))\big) \le T\big(0,S(\rho(1-\eta))\big).
\end{equation}
On the other hand, for any $x$ in $S(\rho\eta)$ we have
\begin{eqnarray*}
 T\big(0,S(\rho(1-\eta))\big)
  & \le  & T(0,x) + T\big(x,S(\rho(1-\eta))\big)\\
 & \le & \left(\sup_{x' \in S(\rho\eta)} T(0,x')\right) + T\big(x,S(\rho(1-\eta))\big).
\end{eqnarray*}
Taking the infimum in $x$, we now get
\begin{equation}\label{e:vmc2}
T\big(0,S(\rho(1-\eta))\big)  \le \left(\sup_{x' \in S(\rho\eta)} T(0,x') \right) + T\big(S(\rho\eta),S(\rho(1-\eta))\big).
\end{equation}
From \eqref{e:vmc1} and \eqref{e:vmc2} we get
\[
 T\big(0,S(\rho(1-\eta))\big) -\left(\sup_{x' \in S(\rho\eta)} T(0,x') \right)  \le T\big(S(\rho\eta),S(\rho(1-\eta))\big) 
 \le T\big(0,S(\rho(1-\eta))\big) - T\big(0,S(\rho\eta)\big).
\]
By Theorem \ref{t:timeconstant} we then deduce the claim. 

We resume the proof of Proposition \ref{p:travel-time-skeleton}.
By the previous claim we have 
\[
\frac{T^{(0)}}\rho \to \mu(\nu)(1-2\eta) > \mu(\nu)-\epsilon/2
\]
almost surely as $\rho \to \infty$.
We can then fix $\rho_0=\rho_0(\epsilon,d,\nu)$ such that, for all $\rho \ge \rho_0$,
\[
Ce^{\beta} \P\left[\frac{T^{(0)}}\rho \le \mu(\nu)-\epsilon/2\right] \le \frac 1 4.
\]
For such $\rho$, we thus have
\[
\P\left[ \inf \left\{T^\square(a_0,\dots,a_k) : (a_0,\dots,a_k) \; \rho\text{-skeleton from }0\right\} 
< k\rho (\mu(\nu)-\epsilon) \right]  \le \frac 1 {2^k}.
\]
Therefore, for any $k_0$,
\[
\P\left[\inf \left\{ \frac 1 {k \rho} T^\square(a_0,\dots,a_k) :  k \ge k_0 \text{ and } (a_0,\dots,a_k) \text{ is a }\rho\text{ skeleton from }0\right\}
 < \mu(\nu)-\epsilon\right]\le \frac 1 {2^{k_0-1}}.
\]
By Borel-Cantelli lemma we deduce that, with probability one, for $k_0$ large enough,
\[
\inf \left\{ \frac 1 {k \rho} T^\square(a_0,\dots,a_k) :  k \ge k_0 \text{ and } (a_0,\dots,a_k) \text{ is a }\rho\text{ skeleton from }0\right\} 
\ge \mu(\nu)-\epsilon.
\]
Therefore $\mu^\square(\rho,\nu) \ge \mu(\nu)-\epsilon$ for all $\rho \ge \rho_0$. 
\end{proof}

\subsection{Lower bound on $\tau(\pi)/\ell_\rho(\pi)$}
\label{s:tau-ltaurho}

The aim of Section \ref{s:tau-ltaurho} is to prove the following result.

\begin{prop} \label{p:tau-ltaurho} 
Let $\hat\nu$ be a finite measure on $(0,+\infty)$ which satisfies the greedy condition \eqref{e:greedy}.
Let $\epsilon>0$. 
There exists $\rho_0=\rho_0(\hat\nu, \epsilon,d)$ such that the following holds.
Let $\rho \ge \rho_0$ and let $\nu \preceq \hat\nu$ be a measure on $(0,+\infty)$.
Let $\xi$ be a Poisson point process driven by $\nu$. 
Then, almost surely,
 \[
 \lim_{s \to \infty}\left[ \inf \left\{\frac{\tau(\pi )}{\ell_\rho(\pi)}, \; \pi \text{ is a }\xi\text{-good path from }0\text{ to }B(0,s)^c\right\}\right] \ge \mu(\hat\nu)-\epsilon.
 \]
\end{prop}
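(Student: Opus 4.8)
The plan is to stitch together the machinery already in place: Proposition~\ref{p:reduction}, which bounds $\widetilde\tau(\pi)/\ell(\pi_\rho)$ below by a skeleton passage time $T^\square$ minus a greedy path correction; Lemma~\ref{l:definition-mu-square} together with Proposition~\ref{p:travel-time-skeleton}, which identify the $s\to\infty$ limit of the $T^\square$ term and bound it below by $\mu(\hat\nu)$ once $\rho$ is large; and Corollary~\ref{c:greedy} together with the greedy condition on $\hat\nu$, which force the correction term to be arbitrarily small. No positivity assumption on $\mu(\hat\nu)$ is needed: when $\mu(\hat\nu)-\epsilon<0$ the statement is trivial since $\tau\ge 0$.

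First I would fix $\epsilon>0$ and couple $\xi$ with a process $\hat\xi$ driven by $\hat\nu$ via the coupling of Section~\ref{s:couplings} (legitimate since $\nu\preceq\hat\nu$). I would record a monotonicity fact: for $s'>s\ge\rho$, every $\xi$-good path from $0$ to $B(0,s')^c$ is also a $\xi$-good path from $0$ to $B(0,s)^c$, while $\tau(\pi;\xi)$ and $\ell_\rho(\pi)$ depend on $\pi$ alone; hence $J_\rho(s):=\inf\{\tau(\pi;\xi)/\ell_\rho(\pi):\pi\text{ a }\xi\text{-good path from }0\text{ to }B(0,s)^c\}$ is non-decreasing in $s$, so $\lim_{s\to\infty}J_\rho(s)$ exists and equals $\liminf_{s\to\infty}J_\rho(s)$, which is what must be bounded below. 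For a $\xi$-good path $\pi$ from $0$ to $B(0,s)^c$ with $s\ge\rho$ one has $\tau(\pi;\xi)=\widetilde\tau(\pi;\xi)$ by Lemma~\ref{l:good} and $\ell_\rho(\pi)=k\rho\ge\rho>0$, so Proposition~\ref{p:reduction} applied with parameters $\rho$ and some $\delta>0$ (to be fixed later) gives, for every such $\pi$,
\[
\frac{\tau(\pi;\xi)}{\ell_\rho(\pi)}\ \ge\ I_\rho(s)\ -\ (2+2\delta^{-1})\,G\bigl(s-\rho;\hat\xi^{>\delta\rho}\bigr)\ -\ 2\delta,
\]
where $I_\rho(s):=\inf\bigl\{\tfrac1{k\rho}T^\square(a_0,\dots,a_k;\hat\xi):k\ge\lfloor s/\rho\rfloor,\ (a_0,\dots,a_k)\ \rho\text{-skeleton from }0\bigr\}$. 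The right-hand side does not depend on $\pi$, so it is also a lower bound for $J_\rho(s)$.

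Next I would let $s\to\infty$. By Lemma~\ref{l:definition-mu-square}, $I_\rho(s)$ increases to the deterministic constant $\mu^\square(\rho,\hat\nu)$; and since $u\mapsto G(u;\cdot)$ is non-increasing, $G(s-\rho;\hat\xi^{>\delta\rho})$ decreases to $G(\infty;\hat\xi^{>\delta\rho})$, which by Corollary~\ref{c:greedy} is, almost surely, at most $C(d)\int_0^\infty\hat\nu\bigl((\delta\rho,+\infty)\cap[r,+\infty)\bigr)^{1/d}\,\d r$. Since $\hat\nu\bigl((\delta\rho,+\infty)\cap[r,+\infty)\bigr)$ equals $\hat\nu((\delta\rho,+\infty))$ for $r\le\delta\rho$ and $\hat\nu([r,+\infty))$ for $r>\delta\rho$, this bound equals
\[
h(\delta\rho):=C(d)\Bigl(\delta\rho\,\hat\nu\bigl((\delta\rho,+\infty)\bigr)^{1/d}+\int_{\delta\rho}^\infty\hat\nu\bigl([r,+\infty)\bigr)^{1/d}\,\d r\Bigr).
\]
Hence, almost surely, $\liminf_{s\to\infty}J_\rho(s)\ge\mu^\square(\rho,\hat\nu)-(2+2\delta^{-1})\,h(\delta\rho)-2\delta$. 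Now I would choose the parameters in the correct order. Since $r\mapsto\hat\nu([r,+\infty))^{1/d}$ is non-increasing and, by \eqref{e:greedy}, integrable on $(0,+\infty)$, both $\int_t^\infty\hat\nu([r,+\infty))^{1/d}\,\d r\to0$ and $t\,\hat\nu((t,+\infty))^{1/d}\le t\,\hat\nu([t,+\infty))^{1/d}\to0$ as $t\to\infty$ (the last point because $\tfrac t2\hat\nu([t,+\infty))^{1/d}\le\int_{t/2}^t\hat\nu([r,+\infty))^{1/d}\,\d r$), so $h(t)\to0$ as $t\to\infty$. I would set $\delta:=\epsilon/6$, so $2\delta\le\epsilon/3$; then, $\delta$ being fixed, pick $\rho_2=\rho_2(\hat\nu,\epsilon,d)$ with $(2+2\delta^{-1})\,h(\delta\rho)\le\epsilon/3$ for all $\rho\ge\rho_2$; and, since \eqref{e:greedy} implies the moment condition \eqref{e:momentd} for $\hat\nu$, apply Proposition~\ref{p:travel-time-skeleton} to get $\rho_1=\rho_1(\hat\nu,\epsilon,d)$ with $\mu^\square(\rho,\hat\nu)\ge\mu(\hat\nu)-\epsilon/3$ for all $\rho\ge\rho_1$. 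With $\rho_0:=\max(\rho_1,\rho_2)$, for every $\rho\ge\rho_0$ the lower bound above is $\ge\mu(\hat\nu)-\epsilon$, which is the claim.

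The main obstacle is organizational rather than analytic, since the substance is carried by Propositions~\ref{p:reduction} and~\ref{p:travel-time-skeleton}: the delicate point is the order of the parameter choices — $\delta$ must be fixed in terms of $\epsilon$ alone in order to absorb the $2\delta$ error, and only afterwards is $\rho$ enlarged so that the now-fixed factor $2+2\delta^{-1}$ is beaten by the decay $G(\infty;\hat\xi^{>\delta\rho})\to0$, a decay that is exactly what the greedy hypothesis on $\hat\nu$ provides.
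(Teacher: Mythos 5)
Your proof is correct and follows essentially the same route as the paper's: apply Proposition~\ref{p:reduction}, let $s\to\infty$ using Lemma~\ref{l:definition-mu-square} and Corollary~\ref{c:greedy}, control $\mu^\square(\rho,\hat\nu)$ via Proposition~\ref{p:travel-time-skeleton} and the greedy term via the greedy condition, then pass from $\widetilde\tau$ to $\tau$ for good paths by Lemma~\ref{l:good}. The only differences are cosmetic: you take $\delta=\epsilon/6$ and verify the vanishing of the greedy bound by a direct tail estimate, where the paper takes $\delta=\epsilon$, uses dominated convergence, and ends with $\mu(\hat\nu)-4\epsilon$, relying on the arbitrariness of $\epsilon$.
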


\begin{proof} Let $\nu \preceq \hat\nu$ and $\epsilon>0$ be as in the statement of the proposition.
We couple $\xi$, driven by $\nu$, and $\hat\xi$, driven by $\hat\nu$, in the natural way explained in Section \ref{s:couplings}.
Let $\rho >0$ and $s > \rho$.
By Proposition \ref{p:reduction} applied with $\delta=\epsilon$ we get
\begin{align*}
 \inf_{\pi : 0 \to B(0,s)^c} \frac{\widetilde\tau(\pi ; \xi)}{\ell(\pi_\rho)} 
& \ge  \inf \left\{ \frac 1 {k \rho} T^\square\left(a_0,\dots,a_k;\hat{\xi} \right) :  k \ge \lfloor s/\rho \rfloor \text{ and } (a_0,\dots,a_k) \text{ is a }\rho\text{ skeleton from }0\right\}   \\
& \qquad \qquad - (2+2\epsilon^{-1}) G\left(s-\rho, \hat{\xi}^{>\epsilon\rho}\right) - 2\epsilon.
\end{align*}
Therefore, by \eqref{e:musquare} and \eqref{e:ginfty},
\[
\lim_{s \to \infty} \inf_{\pi : 0 \to B(0,s)^c} \frac{\widetilde\tau(\pi,\xi)}{\ell(\pi_\rho)}
\ge \mu^\square(\rho,\hat\nu)   - (2+2\epsilon^{-1}) \, G\left(\infty, \hat{\xi}^{>\epsilon\rho}\right) - 2\epsilon.
\]
By Proposition \ref{p:travel-time-skeleton} 
(as $\hat\nu$ satisfies the greedy condition \eqref{e:greedy}, we know by \eqref{e:greedy2} that the $d$-th moment of $\hat\nu$ is finite), 
\[
\liminf_{\rho\to\infty} \mu^\square(\rho,\hat\nu) \ge \mu(\hat\nu).
\]
By Corollary \ref{c:greedy}, there exists $C=C(d)$ such that
\[
G\left(\infty, \hat{\xi}^{>\epsilon\rho}\right)\le C(d) \int_{(0,+\infty)} \hat\nu\big( [\max(\epsilon\rho,r), +\infty)\big)^{1/d} dr.
\]
As $\hat\nu$ satisfies the greedy condition \eqref{e:greedy}, by dominated convergence, we get 
\[
\lim_{\rho \to \infty} \int_{(0,+\infty)} \hat\nu\big( [\max(\epsilon\rho,r), +\infty)\big)^{1/d} dr = 0.
\]
By the previous remarks, there exists $\rho_0=\rho_0(d,\hat\nu,\epsilon)$ such that, for any $\rho \ge \rho_0$,
\[
\mu^\square(\rho,\hat\nu) \ge \mu(\hat\nu)-\epsilon \text{ and } (2+2\epsilon^{-1})G\left(\infty, \hat{\xi}^{>\delta\rho}\right)\le \epsilon
\]
and therefore
\[
\lim_{s \to \infty} \inf_{\pi : 0 \to B(0,s)^c} \frac{\widetilde\tau(\pi;\xi)}{\ell(\pi_\rho)} \ge \mu(\hat\nu)-4\epsilon.
\]
Using Lemma \ref{l:good} we get, for such $\rho$,
\[
\lim_{s \to \infty} \inf_{\tiny \begin{array}{c} \pi : 0 \to B(0,s)^c, \\ \pi \text{ is a }\xi-\text{good path}\end{array}} \frac{\tau(\pi ; \xi)}{\ell(\pi_\rho)} \ge \mu(\hat\nu)-4\epsilon.
\]
This concludes the proof. 
\end{proof}

\subsection{Proof of Theorem \ref{t:controle-l}}
\label{s:preuve:t:controle-l}

We first state and prove the following result.

\begin{prop} \label{p:controle-ltaurho}
Let $\hat\nu$ be a finite measure on $(0,+\infty)$ which satisfies the greedy condition \eqref{e:greedy}.
Assume $\mu(\hat\nu)>0$.
Let $\epsilon>0$. 
There exists $\rho_0=\rho_0(\hat\nu, \epsilon,d)$ such that the following holds.
Let $\rho \ge \rho_0$ and let $\nu \preceq \hat\nu$ be a measure on $(0,+\infty)$.
Let $\xi$ be a Poisson point process driven by $\nu$. 
Then, almost surely,
\[
\limsup_{s \to \infty} \left[\sup\left\{\frac{\ell(\pi_\rho)}{s}, \; \pi \text{ is a good geodesic from } 0 \text{ to } S(0,s)\right\} \right]\le \frac{\mu(\nu)}{\mu(\hat\nu)}+\epsilon.
\]
\end{prop}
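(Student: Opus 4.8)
The plan is to deduce the proposition from Proposition \ref{p:tau-ltaurho}, which lower-bounds $\tau(\pi)/\ell(\pi_\rho)$ along $\xi$-good paths, together with Theorem \ref{t:timeconstant}, which identifies $\tau(\pi)=T(0,S(0,s))$ along a geodesic $\pi$ from $0$ to $S(0,s)$.

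\textbf{Choice of an auxiliary parameter.} Since $\mu(\hat\nu)>0$, pick $\epsilon'=\epsilon'(\epsilon,\hat\nu,d)\in(0,\mu(\hat\nu))$ small enough that
\[
\frac{1}{\mu(\hat\nu)-\epsilon'}\ \le\ \frac{1}{\mu(\hat\nu)}+\epsilon ,
\]
which is possible by continuity. Applying Proposition \ref{p:tau-ltaurho} with $\epsilon'$ in place of $\epsilon$, we obtain $\rho_0=\rho_0(\hat\nu,\epsilon',d)=\rho_0(\hat\nu,\epsilon,d)$ such that, for every $\rho\ge\rho_0$, every $\nu\preceq\hat\nu$ and every Poisson point process $\xi$ driven by $\nu$, almost surely
\[
\lim_{s\to\infty}\ \inf\Bigl\{\,\tfrac{\tau(\pi)}{\ell(\pi_\rho)} : \pi\text{ is a }\xi\text{-good path from }0\text{ to }B(0,s)^c\Bigr\}\ \ge\ \mu(\hat\nu)-\epsilon'\ >\ 0 .
\]

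\textbf{Comparison.} Fix $\rho\ge\rho_0$ and work on the almost-sure event on which the above limit holds and on which, by Theorem \ref{t:timeconstant} and the uniformity in direction noted after it, $\lim_{s\to\infty}T(0,B(0,s)^c)/s=\mu(\nu)$; note $T(0,B(0,s)^c)=T(0,S(0,s))$ because every path from $0$ to $B(0,s)^c$ meets $S(0,s)$. Let $s>\rho$ and let $\pi$ be any good geodesic from $0$ to $S(0,s)$. Then $\pi$ is in particular a $\xi$-good path from $0$ to $B(0,s)^c$ (as $S(0,s)\subset B(0,s)^c$), and $\tau(\pi)=T(0,S(0,s))$ by definition of a geodesic. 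Therefore
\[
\frac{\ell(\pi_\rho)}{s}\ =\ \frac{\tau(\pi)/s}{\tau(\pi)/\ell(\pi_\rho)}\ \le\ \frac{T(0,S(0,s))/s}{\inf\bigl\{\tau(\pi')/\ell(\pi'_\rho) : \pi'\text{ a }\xi\text{-good path from }0\text{ to }B(0,s)^c\bigr\}} .
\]
The right-hand side is independent of $\pi$, so taking the supremum over all good geodesics from $0$ to $S(0,s)$ and then letting $s\to\infty$, and using that the numerator tends to $\mu(\nu)\ge0$ while the denominator tends to a limit $\ge\mu(\hat\nu)-\epsilon'>0$, we get
\[
\limsup_{s\to\infty}\ \sup\Bigl\{\tfrac{\ell(\pi_\rho)}{s} : \pi\text{ a good geodesic from }0\text{ to }S(0,s)\Bigr\}\ \le\ \frac{\mu(\nu)}{\mu(\hat\nu)-\epsilon'}\ \le\ \frac{\mu(\nu)}{\mu(\hat\nu)}+\epsilon ,
\]
the last inequality because $\mu(\nu)/(\mu(\hat\nu)-\epsilon')\le\mu(\nu)\bigl(1/\mu(\hat\nu)+\epsilon\bigr)\le\mu(\nu)/\mu(\hat\nu)+\epsilon$, using $0\le\mu(\nu)\le1$ and the choice of $\epsilon'$. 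This is the desired bound.

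\textbf{Main difficulty.} There is essentially no new difficulty here: the substantive estimate is Proposition \ref{p:tau-ltaurho} (which in turn rests on the BK-inequality argument behind Proposition \ref{p:travel-time-skeleton} and on the greedy-path bound of Corollary \ref{c:greedy}), while $\tau(\pi)/s$ is handled by Theorem \ref{t:timeconstant}. The only points requiring care are the elementary conversion of the bound $\mu(\nu)/(\mu(\hat\nu)-\epsilon')$ into $\mu(\nu)/\mu(\hat\nu)+\epsilon$ (using $\mu(\nu)\le1$ and $\mu(\hat\nu)>0$), and the remark that a good geodesic from $0$ to $S(0,s)$ is a good path from $0$ to $B(0,s)^c$, so Proposition \ref{p:tau-ltaurho} applies to it.
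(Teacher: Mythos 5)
Your proposal is correct and follows essentially the same route as the paper: apply Proposition \ref{p:tau-ltaurho} (with an adjusted $\epsilon$), use that a good geodesic is a good path with $\tau(\pi)=T(0,S(0,s))$, invert the bound, multiply by $T(0,S(0,s))/s\to\mu(\nu)$ from Theorem \ref{t:timeconstant}, and convert the constant using $\mu(\nu)\le 1$. The only difference is the cosmetic choice of the auxiliary parameter ($\mu(\hat\nu)-\epsilon'$ versus the paper's $\mu(\hat\nu)/(1+\epsilon\mu(\hat\nu))$).
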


\begin{proof}
 Let $\nu \preceq \hat\nu$ and $\xi$ be as in the statement of Proposition \ref{p:controle-ltaurho}.
 Let $\epsilon>0$.
 Using Proposition \ref{p:tau-ltaurho}, fix $\rho_0=\rho_0(\hat\nu,d,\epsilon)$ such that, for any $\rho \ge \rho_0$, almost surely,
\[
\lim_{s \to \infty}\left[ \inf \left\{\frac{\tau(\pi;\xi)}{\ell_\rho(\pi)}, \; \pi \text{ is a }\xi\text{-good path from }0\text{ to }B(0,s)^c\right\}\right] \ge \frac{\mu(\hat\nu)}{1+\epsilon\mu(\hat\nu)}
\]
and therefore\footnote{Note that any $\xi$-good geodesic from $0$ to $S(0,s)$ is a good path from $0$ to $B(0,s)^c$
and that for any $\xi$-good geodesic from $0$ to $S(0,s)$ we have $T(0,S(0,s);\xi) = \tau(\pi;\xi)$.}
\begin{equation*}
\liminf_{s \to \infty}\left[ \inf \left\{\frac{T(0,S(0,s);\xi)}{\ell_\rho(\pi)}, \; \pi \text{ is a }\xi\text{-good geodesic from }0\text{ to }S(0,s)\right\}\right] \ge\frac{\mu(\hat\nu)}{1+\epsilon\mu(\hat\nu)}
\end{equation*}
and then
\begin{equation}\label{e:tic}
\limsup_{s \to \infty}\left[ \sup \left\{\frac{\ell_\rho(\pi)}{T(0,S(0,s);\xi)}, \; \pi \text{ is a }\xi\text{-good geodesic from }0\text{ to }S(0,s)\right\}\right] \le \frac{1+\epsilon\mu(\hat\nu)}{\mu(\hat\nu)}.
\end{equation}
By Theorem \ref{t:timeconstant}, almost surely,
 \begin{equation} \label{e:tac}
 \lim_{s \to \infty} \frac{T(0,S(0,s) ; \xi)}s = \mu(\nu). 
 \end{equation}
 For any $\rho \ge \rho_0$, by \eqref{e:tic} and \eqref{e:tac}, almost surely,
 \[
 \limsup_{s \to \infty}\left[ \sup \left\{\frac{\ell_\rho(\pi)}{s}, \; \pi \text{ is a }\xi\text{-good-geodesic from }0\text{ to }S(0,s)\right\}\right] 
 \le \mu(\nu)\frac{1+\epsilon\mu(\hat\nu)}{\mu(\hat\nu)}
 \le \frac{\mu(\nu)}{\mu(\hat\nu)} + \epsilon
 \] 
 as $\mu(\nu)\le 1$.
\end{proof}

\begin{proof}[Proof of Theorem \ref{t:controle-ltaurho:simple}]
Theorem \ref{t:controle-ltaurho:simple} is a special case of Proposition \ref{p:controle-ltaurho} when $\nu=\hat\nu$.
\end{proof}

\begin{proof}[Proof of Theorem \ref{t:controle-l}]
Let $\hat\nu$ and $\nu$ be as in the statement of Theorem \ref{t:controle-l}.
Use Proposition \ref{p:controle-ltaurho} with $\epsilon=1$. 
We get $\rho_0=\rho_0(\hat\nu,d)$ such that the following holds.
Almost surely,
\begin{equation}\label{e:rentrons1}
\limsup_{s \to \infty} \left[\sup\left\{\frac{\ell(\pi_{\rho_0})}{s}, \; \pi \text{ is a good geodesic from } 0 \text{ to } S(0,s)\right\} \right]\le \frac{\mu(\nu)}{\mu(\hat\nu)}+1 \le \frac{2\mu(\nu)}{\mu(\hat\nu)} \le \frac{2}{\mu(\hat\nu)}
\end{equation}
where we used $1 \ge \mu(\nu) \ge \mu(\hat\nu)$.
Set $\lambda = \nu[(0,+\infty)]$ and $\hat\lambda = \hat \nu[(0,+\infty)]$.
Let $\chi$ be the projection of $\xi$ on $\R^d$.
This is a Poisson point process of intensity $\lambda$ times the Lebesgue measure.
Note $\lambda \le \hat\lambda$.
Apply Proposition \ref{p:controlellrho}.
We get $C'=C'(d)$ such that 
\[
\lim_{k \to \infty} \left[\sup\left\{ \frac{\ell(\pi)}{\ell(\pi_{\rho_0})}, \; \pi \text{ a } \chi\text{-regular path from }0\text{ such that }
\ell(\pi_{\rho_0}) \ge k\rho_0\right\} \right]\le \max(1,\lambda \rho_0^d) C'. 
\]
As a $\xi$-good geodesic from $0$ to $S(0,s)$ is a $\chi$-regular path from $0$ such that $\ell(\pi_{\rho_0}) \ge s-\rho_0$, 	we get
\begin{equation}\label{e:rentrons2}
\limsup_{s \to \infty} \left[\sup\left\{ \frac{\ell(\pi)}{\ell(\pi_{\rho_0})}, \;  \pi \text{ is a good geodesic from } 
0 \text{ to } S(0,s)\right\} \right]\le \max(1,\lambda \rho_0^d) C'.
\end{equation}
Therefore by \eqref{e:rentrons1} and \eqref{e:rentrons2} we get
\[
\limsup_{s \to \infty} \left[\sup\left\{ \frac{\ell(\pi)}s, \;  \pi \text{ is a good geodesic from } 
0 \text{ to } S(0,s)\right\} \right]\le  \frac{2}{\mu(\hat\nu)}\max(1,\hat\lambda \rho_0^d) C'.
\]
This ends the proof\footnote{The proof actually shows 
\[
\limsup_{s \to \infty} \left[\sup\left\{ \frac{\ell(\pi)}s, \;  \pi \text{ is a good geodesic from } 
0 \text{ to } S(0,s)\right\} \right]\le  C\frac{\mu(\nu)}{\mu(\hat\nu)}\max(1,\lambda \rho_0^d) 
\]
for some constant $C=C(d)$.
}.
\end{proof}

\section{Proofs of the main results.}
\label{s:preuve:main}

\subsection{Proof of Theorem \ref{t:main}}
\label{s:preuve:t:main}

Let $\cR_1,\cR_2$ and $\hat\cR$ be as in the statement of Theorem \ref{t:main}.
Define three new admissible maps by
\[
\cR^-=\min(\cR_1,\cR_2), \; \cR^+=\max(\cR_1,\cR_2) \text{ and } \cR^\Delta=\cR^+-\cR^- =|\cR_2-\cR_1|.
\]
Let $\Xi$ be a Poisson point process on $\R^d \times (0,+\infty)$.
As in Section \ref{s:couplings}, we build a point process $\hat\xi$ using $\Xi$ and $\hat\cR$ and introduce an appropriate random map $\hat r$ such that
\[
\hat\xi = \{(c,\hat r(c)), c \in \R^d \text{ such that } \hat r(c) >0\}.
\]
Note that we have shortened our notations:  we wrote $\hat\xi$ instead of $\xi^{\hat\cR}$ and $\hat r(c)$ instead of $r(c ; \xi^{\hat\cR})$.
In a similar way and with the same kind of notations, we define 
$\xi^1$ using $\cR^1$,  
$\xi^2$ using $\cR^2$,  
$\xi^-$ using $\cR^-$,
$\xi^+$ using $\cR^+$
and $\xi^\Delta$ using $\cR^\Delta$.
In particular,
\begin{align*}
\xi^- & =\{(c,r^-(c)), c \in \R^d \text{ such that } r^-(c)>0\}, \\
\xi^+ & =\{(c,r^+(c)), c \in \R^d \text{ such that } r^+(c)>0\}, \\
\xi^\Delta & =\{(c,r^\Delta(c)), c \in \R^d \text{ such that } r^\Delta(c)>0\}
\end{align*}
and, for all $c \in \R^d$,
\begin{equation}\label{e:rdeltadiff}
r^\Delta(c) = r^+(c)-r^-(c).
\end{equation}

Let $s>0$.
Thanks to Lemma \ref{l:good-geodesic}, we can fix a $\xi^+$-good geodesic $\pi^+=(x_0,\dots,x_n)$ from $0$ to $S(0,s)$.
By Lemma \ref{l:modification-sigma} and \eqref{e:rdeltadiff} we get
\begin{align}
\widetilde\tau(\pi^+;\xi^-) 
& \le \widetilde\tau(\pi^+;\xi^+) + 2 \sum_{i \in \{1,\dots,n-1\}} r^\Delta(x_i) \nonumber \\
& \le \widetilde\tau(\pi^+;\xi^+) + 2r(\pi^+ ; \xi^\Delta), \label{e:curry1}
\end{align}
using greedy paths notations.
Lemma \ref{l:good} then yields, as $\pi^+$ is a $\xi^+$-good geodesic,
\begin{equation} \label{e:curry2}
\widetilde\tau(\pi^+ ; \xi^+) = \tau(\pi^+ ; \xi^+) = T(0,S(0,s) ; \xi^+) 
\text{ and } 
T(0,S(0,s);\xi^-) \le \tau(\pi^+ ; \xi^-) \le \widetilde\tau(\pi^+ ; \xi^-).
\end{equation}
Combining \eqref{e:curry1} and \eqref{e:curry2}, we get
\[
T(0,S(0,s) ; \xi^-) \le T(0,S(0,s) ; \xi^+) + 2r(\pi^+ ; \xi^\Delta).
\]
Therefore
\begin{align}
\frac{T(0,S(0,s);\xi^-)}s 
& \le \frac{T(0,S(0,s) ; \xi^+)}s + 2 \frac{r(\pi^+; \xi^\Delta)}{\ell(\pi^+)} \frac{\ell(\pi^+)}s \nonumber \\
& \le  \frac{T(0,S(0,s) ; \xi^+)}s + 2G\big(s;\xi^\Delta)\frac{\ell(\pi^+)}s. \label{e:mer1}
\end{align}

Let $C=C(d,\hat R)$ be the constant given by Theorem \ref{t:controle-l}.
We then have
\begin{equation} \label{e:mer2}
\limsup_{s \to \infty} \left[\sup\left\{ \frac {\ell(\pi)}s, \pi \text{ is a } \xi^+\text{-good geodesic from }0\text{ to }S(0,s)\right\}\right] \le C.
\end{equation}
Let $C'(d)$ be the constant given by Corollary \ref{c:greedy}.
Then
\begin{equation} \label{e:mer3}
G(\infty ; \xi^{\Delta}) \le C'\int_0^\infty \nu^{|\cR_2-\cR_1|}([r,+\infty))^{1/d} \d r.
\end{equation}
By Theorem \ref{t:timeconstant}, \eqref{e:mer1}, \eqref{e:mer2} and \eqref{e:mer3} we get
\begin{align*}
\mu(\nu^{\cR^-}) 
& \le \mu(\nu^{\cR^+}) + 2CC' \int_0^\infty \nu^{|\cR_2-\cR_1|}([r,+\infty))^{1/d} \d r.
\end{align*}
But $\cR^- \le \cR_1 \le \cR^+$ and $\cR^- \le \cR_2 \le \cR^+$.
Therefore $\mu(\nu^{\cR^+})  \le \mu(\nu^{\cR_1}) \le  \mu(\nu^{\cR^-})$ and $\mu(\nu^{\cR^+})  \le \mu(\nu^{\cR_2}) \le  \mu(\nu^{\cR^-})$.
Combined with the previous result, this yields
\[
|\mu(\nu^{\cR_1}) - \mu(\nu^{\cR_2})| \le 2CC' \int_0^\infty \nu^{|\cR_2-\cR_1|}([r,+\infty))^{1/d} \d r.
\]
This concludes the proof. \qed

\subsection{Proof of Corollary \ref{c:main}}
\label{s:preuve:c:main}

We first prove a lemma relating the moment $(d+\eta)$ of a map $\cR$ with the integral appearing in the greedy condition \eqref{e:greedy}.

\begin{lemma} \label{l:quantitegreedy}
Let $\eta>0$ and $\lambda \ge 0$. 
Let $\cR$ be an admissible map from $(0,+\infty)$ to $[0,+\infty)$ such that $\nu^{\cR}[(0,+\infty)] \le \lambda$.
There exists $C=C(d,\lambda,\eta)$ such that
\[
\int_0^\infty \nu^{\cR}([r,+\infty))^{1/d} \d r  \le C \left(\int_{(0,+\infty)} \cR(u)^{d+\eta} \d u\right)^{1/(d+\eta)}.
	\]
\end{lemma}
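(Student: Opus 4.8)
The plan is to control the super-level sets of $\cR$ in two complementary ways and then interpolate by cutting the defining integral at a well-chosen threshold. Write $g(r)=\nu^{\cR}([r,+\infty))$ for $r>0$; by the definition of $\nu^{\cR}$, $g(r)$ is the Lebesgue measure of $\{u>0:\cR(u)\ge r\}$. We may assume that $m:=\int_{(0,+\infty)}\cR(u)^{d+\eta}\,\d u$ is finite, for otherwise the right-hand side is infinite and there is nothing to prove. We then have two bounds: first, since $[r,+\infty)\subset(0,+\infty)$, we get $g(r)\le\nu^{\cR}[(0,+\infty)]\le\lambda$ for all $r>0$; second, Markov's inequality gives $g(r)\le m\,r^{-(d+\eta)}$. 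Taking $d$-th roots,
\[
g(r)^{1/d}\le\min\!\left(\lambda^{1/d},\,m^{1/d}\,r^{-(d+\eta)/d}\right)\qquad(r>0).
\]

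First I would split the integral at an arbitrary $r_0>0$, using the first bound on $(0,r_0)$ and the second on $(r_0,+\infty)$:
\[
\int_0^\infty g(r)^{1/d}\,\d r\le\lambda^{1/d}\,r_0+m^{1/d}\int_{r_0}^\infty r^{-(d+\eta)/d}\,\d r=\lambda^{1/d}\,r_0+\frac{d}{\eta}\,m^{1/d}\,r_0^{-\eta/d}.
\]
The tail integral converges precisely because $(d+\eta)/d=1+\eta/d>1$; this is the only place the exponent $d+\eta$ rather than $d$ is used, and it is exactly the margin that makes the greedy integral finite.

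Finally I would optimize over $r_0>0$. The right-hand side has the form $A r_0+B r_0^{-\eta/d}$ with $A=\lambda^{1/d}$ and $B=(d/\eta)\,m^{1/d}$; its minimum is attained at $r_0=(B\eta/(dA))^{d/(d+\eta)}$ and equals $C(d,\eta)\,A^{\eta/(d+\eta)}B^{d/(d+\eta)}$ for an explicit constant $C(d,\eta)$. Substituting the values of $A$ and $B$ yields
\[
\int_0^\infty g(r)^{1/d}\,\d r\le C(d,\eta)\,\lambda^{\eta/(d(d+\eta))}\left(\frac{d}{\eta}\right)^{d/(d+\eta)} m^{1/(d+\eta)}=C(d,\lambda,\eta)\left(\int_{(0,+\infty)}\cR(u)^{d+\eta}\,\d u\right)^{1/(d+\eta)},
\]
which is the claimed inequality.

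There is no genuine obstacle in this lemma; it is a one-line Markov estimate followed by an elementary optimization. The only thing to keep track of is the bookkeeping in the optimization, which shows that the resulting constant depends on $\cR$ only through the bound $\lambda$ on its total mass (and on $d$ and $\eta$), so that the estimate is uniform over all admissible maps $\cR$ with $\nu^{\cR}[(0,+\infty)]\le\lambda$, as is needed when deducing Corollary \ref{c:main} from Theorem \ref{t:main}.
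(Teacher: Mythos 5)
Your proof is correct and follows essentially the same route as the paper: both rest on the two bounds $\nu^{\cR}([r,+\infty))\le\lambda$ and $\nu^{\cR}([r,+\infty))\le r^{-(d+\eta)}\int\cR(u)^{d+\eta}\,\d u$, i.e.\ on integrating $\min\bigl(\lambda, m\,r^{-(d+\eta)}\bigr)^{1/d}$. The only cosmetic difference is that the paper extracts the factor $m^{1/(d+\eta)}$ by the change of variables $r=m^{1/(d+\eta)}s$, whereas you split the integral at a threshold $r_0$ and optimize over it; both yield a constant depending only on $d,\lambda,\eta$.
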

\begin{proof}
For any $r>0$,
\[
\nu^{\cR}([r,+\infty)) = \int_{(0,+\infty)} \1_{[r,+\infty[}(\cR(u)) \d u \le \int_{(0,+\infty)} \cR(u)^{d+\eta} r^{-(d+\eta)} \d u.
\]
Moreover $\nu^{\cR}([r,+\infty)) \le \nu^{\cR}[(0,+\infty)]$.
Therefore
\begin{align*}
\int_0^\infty \nu^{\cR}([r,+\infty))^{1/d} \d r 
 & \le \int_0^\infty \min\left(\nu^{\cR}[(0,+\infty)], \left( \int_{(0,+\infty)} \cR(u)^{d+\eta} \d u \right) \; r^{-(d+\eta)} \right)^{1/d} \d r \\
 & = \left(\int_{(0,+\infty)} \cR(u)^{d+\eta} \d u\right)^{1/(d+\eta)} \int_0^\infty \min\left(\nu^{\cR}[(0,+\infty)], s^{-(d+\eta)} \right)^{1/d} \d s \\
 & \le \left(\int_{(0,+\infty)} \cR(u)^{d+\eta} \d u\right)^{1/(d+\eta)} \int_0^\infty \min\left(\lambda, s^{-(d+\eta)} \right)^{1/d} \d s.
\end{align*}
The result follows with the choice
$
C=\int_0^\infty \min\left(\lambda, s^{-(d+\eta)} \right)^{1/d} \d s.
$
\end{proof}

\begin{proof}[Proof of Corollary \ref{c:main}]
Set $\lambda = \nu^{\hat\cR}[(0,+\infty)]$.
Let $C=C(d,\lambda,\eta)$ be given by Lemma \ref{l:quantitegreedy}.
As $\cR_1 \le \hat\cR$ and $\cR_2 \le \hat\cR$ we get $\nu^{|\cR_1-\cR_2|}[(0,+\infty)] \le \lambda$.
We can therefore apply Lemma \ref{l:quantitegreedy} and we get
\[
\int_0^\infty \nu^{|\cR_1-\cR_2|}([r,+\infty))^{1/d} \d r  \le C \left(\int |\cR_1(u)-\cR_2(u)|^{d+\eta} \d u\right)^{1/(d+\eta)} 
\]
and
\[
\int_0^\infty \nu^{\hat\cR}([r,+\infty))^{1/d} \d r  \le C \left(\int \hat\cR(u)^{d+\eta} \d u\right)^{1/(d+\eta)} < \infty.
\]

We can then apply Theorem \ref{t:main} and conclude the proof.
\end{proof}

\subsection{Proof of Theorem \ref{t:mainsuite}}
\label{s:preuve:t:mainsuite}

For all $n \ge 0$, define the admissible maps
\[
\cR_n^-=\inf_{k \in \N \cup \{\infty\} : k \ge n}\cR_k \text{ and } \cR_n^+=\sup_{k \in \N \cup \{\infty\} : k \ge n} \cR_k.
\]
Let us note a couple of facts.
\begin{enumerate}
\item For all $n$, $\cR_n^- \le \hat\cR$ and $\cR_n^+ \le \hat\cR$.
\item $\cR_n^-$ and $\cR_n^+$ converge almost surely to $\cR_\infty$.
\end{enumerate}

Let $\Xi$ be a Poisson point process on $\R^d \times (0,+\infty)$.
As in Section \ref{s:couplings}, we build a point process $\hat\xi$ using $\Xi$ and $\hat\cR$ and introduce an appropriate random map $\hat r$ such that
\[
\hat\xi = \{(c,\hat r(c)), c \in \R^d \text{ such that } \hat r(c) >0\}.
\]
As in the proof of Theorem \ref{t:main}, we have shortened our notations.
In a similar way and with the same kind of notations, we define 
$\xi_\infty$ with $\cR_\infty$ and, for all $n$, 
$\xi_n$ with $\cR_n$, 
$\xi_n^-$ with $\cR^-_n$
and $\xi_n^+$ with $\cR^+_n$.
We denote by $\mu_\infty, \mu_n, \mu_n^-$ and $\mu_n^+$ the associated time constants.
We will use similar conventions for $r$, $\nu$, $\tau$ and $T$.

\begin{claim} \label{claim:cvrn-} On a probability one event,
\[	
\forall c \in \R^d, \; \lim_n r_n^-(c)=r_\infty(c).
\]
\end{claim}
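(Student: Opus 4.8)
The plan is to reduce this almost sure statement, via the coupling of Section~\ref{s:couplings}, to a purely deterministic convergence of the maps $\cR_n^-$ together with one elementary property of the Poisson process $\Xi$: it almost surely puts no atom whose second coordinate lies in a prescribed Lebesgue-null set.

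First I would record the deterministic input. Since the index set $\{k \in \N \cup \{\infty\} : k \ge n\}$ shrinks as $n$ grows, $\cR_n^-$ is non-decreasing in $n$, so for every $u \in (0,+\infty)$ the limit $\lim_n \cR_n^-(u)$ exists in $[0,+\infty)$; pulling the minimum with the constant $\cR_\infty(u)$ through the supremum over $n$ gives
\[
\lim_n \cR_n^-(u) \;=\; \min\!\Big(\cR_\infty(u),\ \liminf_{k\to\infty}\cR_k(u)\Big).
\]
By assumption (3) of Theorem~\ref{t:mainsuite} there is a Lebesgue-null set $N \subset (0,+\infty)$ such that $\cR_k(u) \to \cR_\infty(u)$ for all $u \notin N$, and for such $u$ the right-hand side equals $\cR_\infty(u)$. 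Hence $\cR_n^-(u) \to \cR_\infty(u)$ for every $u \notin N$.

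Next I would transfer this to the point process. Recall from Section~\ref{s:couplings} that for a fixed $c \in \R^d$ there are two cases: either there is a (unique) $u(c)$ with $(c,u(c)) \in \Xi$, in which case $r^{\cR}(c) = \cR(u(c))$ for every admissible map $\cR$ --- in particular $r_n^-(c) = \cR_n^-(u(c))$ and $r_\infty(c) = \cR_\infty(u(c))$ --- or no such $u(c)$ exists, in which case $r^{\cR}(c) = 0$ for every $\cR$, so $r_n^-(c) = r_\infty(c) = 0$. Consequently, on the event
\[
E \;=\; \{\,\text{no atom } (c,u) \text{ of } \Xi \text{ has } u \in N\,\}
\]
the conclusion holds for every $c$: trivially in the second case, and by the deterministic fact applied to $u(c) \notin N$ in the first. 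It then remains to check $\P(E) = 1$. Writing $\R^d = \bigcup_{m \ge 1} B(0,m)$, it suffices to show that for each $m$ there is almost surely no atom of $\Xi$ in $B(0,m) \times N$; but the number of such atoms is Poisson with parameter the $\Xi$-intensity of $B(0,m) \times N$, which equals $\operatorname{vol}(B(0,m)) \cdot \operatorname{Leb}(N) = 0$, hence vanishes almost surely. Intersecting over $m$ gives $\P(E) = 1$.

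There is no serious obstacle; the only point that needs care is that $N$ is fixed in advance --- it comes from the deterministic hypothesis on $(\cR_k)_k$ and does not depend on $\omega$ --- which is exactly what licenses the ``thinning by a Lebesgue-null set'' step for $\Xi$. I would remark in passing that the identical argument applied to $\cR_n^+ = \sup_{k \ge n} \cR_k$ (non-increasing in $n$) gives the companion statement $\lim_n r_n^+(c) = r_\infty(c)$ for all $c$, almost surely.
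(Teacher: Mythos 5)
Your proof is correct and follows essentially the same route as the paper: fix the deterministic Lebesgue-null exceptional set $N$ from hypothesis (3), note that almost surely $\Xi$ has no atom with second coordinate in $N$ (the paper invokes that $\R^d\times N$ has zero Lebesgue measure, you verify it via a Poisson count on $B(0,m)\times N$), and then conclude pointwise in $c$ by distinguishing whether $c$ carries an atom of $\Xi$, using that $\cR_n^-(u)\to\cR_\infty(u)$ off $N$. The extra details you supply (monotonicity of $\cR_n^-$ and the explicit identification of the limit as $\min(\cR_\infty(u),\liminf_k\cR_k(u))$) are harmless refinements of the same argument.
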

\begin{proof}[Proof of Claim \ref{claim:cvrn-}]
Let $A \subset (0,+\infty)$ be a Borel set with zero Lebesgue measure such that 
$\cR_n(u)$ converges to $\cR_\infty(u)$ for all $u \in (0,+\infty) \setminus A$.
The Lebesgue measure of $\R^d \times A$ is zero.
As a consequence, on a probability one event, there is no points of $\Xi$ in $\R^d \times A$.
On this full probability event, we then have 
\[
\forall (c,u) \in \Xi, \; \lim_n \cR_n(u) = \cR_\infty(u)
\]
and therefore
\[
\forall (c,u) \in \Xi, \; \lim_n \cR^-_n(u) = \cR_\infty(u)
\]
and thus we get (distinguishing whether there exists $u \in (0,+\infty)$ such that $(c,u) \in \Xi$ or not) 
\[
\forall c \in \R^d, \; \lim_n r^-_n(c) = r_\infty(c).
\]
The claim is proven.
\end{proof}

For all $n$,
\[
\mu_n^+ \le \mu_n \le \mu_n^-.
\]
To conclude the proof, it is therefore sufficient to prove
\begin{equation}\label{e:mun+}
\liminf_n \mu_n^+ \ge \mu_\infty
\end{equation}
and
\begin{equation}\label{e:mun-}
\limsup_n \mu_n^- \le \mu_\infty.
\end{equation}

Let us first prove \eqref{e:mun-}.
This is a simple consequence of the following claim.
\begin{claim} \label{claim:pasmalmat2} For all $k \ge 1$,
\[
\lim_n \E\left[ \frac{T_n^-(0,ke_1)}{k}\right] = \E\left[\frac{T_\infty(0,ke_1)}{k}\right].
\]
\end{claim}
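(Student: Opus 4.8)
The plan is to establish the stronger almost sure statement that $T_n^-(0,ke_1)$ decreases to $T_\infty(0,ke_1)$, and then pass to expectations via a trivial uniform bound.

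First I would record the relevant monotonicity. Since $\cR_n^-=\inf_{j\in\N\cup\{\infty\},\,j\ge n}\cR_j$, the maps $\cR_n^-$ are non-decreasing in $n$, so by the construction of the coupled point processes in Section \ref{s:couplings} the radius $r_n^-(c)$ is non-decreasing in $n$ for every $c\in\R^d$. Combining this with Claim \ref{claim:cvrn-}, on a probability-one event we have $r_n^-(c)\uparrow r_\infty(c)$ for every $c$. Letting $\Sigma_n^-$ denote the Boolean model associated with $\xi_n^-$, i.e.\ $\Sigma_n^-=\bigcup_c B(c,r_n^-(c))$, the sequence $(\Sigma_n^-)_n$ is increasing and $\bigcup_n\Sigma_n^-=\bigcup_c\bigcup_n B(c,r_n^-(c))=\bigcup_c B(c,r_\infty(c))=\Sigma_\infty$. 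For the middle equality one checks the two cases: if $r_\infty(c)=0$ then $r_n^-(c)=0$ for all $n$ and both sides are empty, while if $r_\infty(c)>0$ then an open ball is the increasing union of the concentric open balls whose radii increase to it.

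Next I would transfer this to travel times. Fix $x,y\in\R^d$. The one-dimensional Lebesgue measure $\tau_n^-(x,y)$ of $[x,y]\cap(\Sigma_n^-)^c$ is non-increasing in $n$, and $\bigcap_n(\Sigma_n^-)^c=\Sigma_\infty^c$; since $\mathrm{Leb}_1(\,\cdot\cap[x,y])$ is a finite measure, continuity from above gives $\tau_n^-(x,y)\downarrow\tau_\infty(x,y)$. Hence for any fixed path $\pi$, being a finite sum, $\tau_n^-(\pi)\to\tau_\infty(\pi)$. Since $\Sigma_n^-\subseteq\Sigma_\infty$ we have $T_n^-(0,ke_1)\ge T_\infty(0,ke_1)$ for all $n$, and $T_n^-(0,ke_1)=\inf_\pi\tau_n^-(\pi)$ is non-increasing in $n$. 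For the matching upper bound, work on the full-probability event above: given $\epsilon>0$, choose for the given realization a path $\pi$ from $0$ to $ke_1$ with $\tau_\infty(\pi)\le T_\infty(0,ke_1)+\epsilon$; then $\limsup_n T_n^-(0,ke_1)\le\lim_n\tau_n^-(\pi)=\tau_\infty(\pi)\le T_\infty(0,ke_1)+\epsilon$. Letting $\epsilon\to0$ yields $T_n^-(0,ke_1)\downarrow T_\infty(0,ke_1)$ almost surely.

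Finally I would pass to the limit in expectation. For any Boolean model the straight segment from $0$ to $ke_1$ has Euclidean length $k$, and $\tau$ of a segment never exceeds its Euclidean length, so $0\le T_n^-(0,ke_1)\le k$ for all $n$. By dominated convergence (or monotone convergence, the sequence being non-increasing and non-negative) $\E[T_n^-(0,ke_1)]\to\E[T_\infty(0,ke_1)]$, and dividing by $k$ gives the claim. I do not expect a genuine obstacle here: the argument is routine once Claim \ref{claim:cvrn-} is available, the only points needing a little care being the chain of monotonicities (radii increase $\Rightarrow$ occupied set increases $\Rightarrow$ complement decreases $\Rightarrow$ $\tau$ decreases $\Rightarrow$ $T$ decreases) and the degenerate-radius case when identifying $\bigcup_n\Sigma_n^-$ with $\Sigma_\infty$; the interchange of limit and expectation is immediate from the uniform bound $T_n^-\le k$.
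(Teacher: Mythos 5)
Your proof is correct, and it reaches the claim by a genuinely different route than the paper for the key convergence step. Both arguments share the same skeleton: prove the almost sure convergence $T_n^-(0,ke_1)\to T_\infty(0,ke_1)$ by combining the trivial lower bound $T_n^-\ge T_\infty$ (from $\cR_n^-\le\cR_\infty$) with an upper bound obtained by evaluating $\tau_n^-$ along an $\epsilon$-optimal path for $T_\infty$, then conclude with dominated convergence using $0\le T_n^-(0,ke_1)\le k$. The difference lies in how one shows $\limsup_n\tau_n^-(\pi)\le\tau_\infty(\pi)$ for that fixed path. The paper cannot compare $\tau_n^-(\pi)$ and $\tau_\infty(\pi)$ directly: it first replaces the near-optimal path by a path $\pi$ with $\widetilde\tau_\infty(\pi)\le\tau_\infty(\pi')$ (Lemma \ref{l:good-geodesic-pi}), then uses the quantitative comparison $\widetilde\tau_n^-(\pi)\le\widetilde\tau_\infty(\pi)+2\sum_i\big(r_\infty(x_i)-r_n^-(x_i)\big)$ of Lemma \ref{l:modification-sigma} together with Lemma \ref{l:good} and Claim \ref{claim:cvrn-} applied to the finitely many vertices of $\pi$. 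You instead exploit the monotonicity of $n\mapsto\cR_n^-$ (which the paper never uses) to get $\Sigma_n^-\uparrow\Sigma_\infty$ on the full-probability event of Claim \ref{claim:cvrn-}, and then obtain $\tau_n^-(x,y)\downarrow\tau_\infty(x,y)$ on each segment by continuity from above of the (finite) one-dimensional Lebesgue measure, which gives $\tau_n^-(\pi)\to\tau_\infty(\pi)$ for every fixed path — including the original $\epsilon$-optimal one, so no replacement lemma is needed. Your handling of the degenerate case $r_\infty(c)=0$ and of the identity $\bigcup_n B(c,r_n^-(c))=B(c,r_\infty(c))$ for increasing radii is the right point to check and is done correctly. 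What each approach buys: yours is more elementary and self-contained, bypassing the good-path/local-travel-time machinery entirely (and it would even work without monotonicity, by pointwise convergence of indicators on the segment plus dominated convergence); the paper's argument recycles Lemmas \ref{l:good}, \ref{l:good-geodesic-pi} and \ref{l:modification-sigma}, which are needed anyway for Theorem \ref{t:main}, and its comparison is quantitative in the radius differences, though no rate is actually required for this claim.
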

Let us first prove \eqref{e:mun-} assuming Claim \ref{claim:pasmalmat2}.
Let $\epsilon>0$. 
By Theorem \ref{t:timeconstant} we can fix $k$ be such that 
\[
\frac{\E[T_\infty(0,ke_1)]}{k} \le \mu_\infty + \epsilon.
\]
By Claim \ref{claim:pasmalmat2} we can fix $n_0$ such that, for all $n \ge n_0$,
\[
\E\left[ \frac{T_n^-(0,ke_1)}{k}\right] \le \E\left[\frac{T_\infty(0,ke_1)}{k}\right] + \epsilon \le \mu_\infty+2\epsilon.
\]
But for any $n$, by \eqref{e:nuinfimum} we know that $\mu_n^-$ is defined as an infimum thus
\[
\mu_n^- \le \E\left[ \frac{T_n^-(0,ke_1)}{k}\right]
\]
and therefore, for any $n \ge n_0$,
\[
\mu_n^- \le \mu_\infty+2\epsilon.
\]
Thus, \eqref{e:mun-} is proven assuming Claim \ref{claim:pasmalmat2}.
We now prove this claim.

\begin{proof}[Proof of Claim \ref{claim:pasmalmat2}]
Let $k \ge 1$. 
We first prove the following almost sure equality:
\begin{equation} \label{e:pasmalmat}
\lim_n \frac{T_n^-(0,ke_1)}{k} = \frac{T_\infty(0,ke_1)}k.
\end{equation}
Let $\epsilon>0$.
Let $\pi'$ be a path from $0$ to $ke_1$ such that $\tau_\infty(\pi') \le T_\infty(0,ke_1)+\epsilon$.
By Lemma \ref{l:good-geodesic-pi} we can fix a path $\pi=(x_0,\dots,x_m)$ from $0$ to $ke_1$ 
such that $\widetilde\tau_\infty(\pi) \le \tau_\infty(\pi')$.
We then have
\begin{align*}
T_\infty(0,ke_1) + \epsilon 
& \ge \widetilde\tau_\infty(\pi) \\
& \ge \widetilde\tau_n^-(\pi) - 2 \sum_{i \in \{1,\dots,m-1\}} \big(r_\infty(x_i)-r_n^-(x_i)\big) \text{ by Lemma } \ref{l:modification-sigma}  \\
& \ge \tau_n^-(\pi) - 2 \sum_{i \in \{1,\dots,m-1\}} \big(r_\infty(x_i)-r_n^-(x_i)\big) \text{ by Lemma } \ref{l:good}.
\end{align*}
But, by Claim \ref{claim:cvrn-}, on a probability one event, for any such path $\pi$, we have
\[
\lim_n \sum_{i \in \{1,\dots,n-1\}} \big(r_\infty(x_i)-r_n^-(x_i)\big) = 0 
\]
and therefore
\[
\limsup_n \tau_n^-(\pi) \le T_\infty(0,ke_1) +\epsilon
\]
and then $\limsup_n T_n^-(0,ke_1) \le  T_\infty(0,ke_1)+\epsilon$. So,

\[
\limsup_n T_n^-(0,ke_1) \le  T_\infty(0,ke_1).
\]
But for all $n$, $\cR_n^- \le \cR_\infty$ and therefore $T_\infty(0,ke_1) \le T_n^-(0,ke_1)$.
This yields \eqref{e:pasmalmat}.
For all $n$, $0 \le T_n^-(0,ke_1) \le k$.
By \eqref{e:pasmalmat} and by the dominated convergence theorem, we finally conclude the proof.
\end{proof}

At this stage of the proof, \eqref{e:mun-} is proven. 
We now prove \eqref{e:mun+}. 
This will end the proof.
If $\mu_\infty=0$, the result is trivial.
Henceforth, we assume $\mu_\infty>0$.
First note that, if $\cR,\cS$ are two admissible maps such that $\cR \le \cS$, then for all $r>0$,
\[
\nu^\cR([r,+\infty))
 = \int_0^{+\infty} \1_{[r,+\infty)}[ \cR(u)] \d u
 \le \int_0^{+\infty} \1_{[r,+\infty)}[\cS(u)] \d u
 = \nu^\cS([r,+\infty)).
\]
From Item 2 of Theorem \ref{t:mainsuite} we thus get that $\nu_\infty$ and all the $\nu_n^+$ satisfy the greedy condition \eqref{e:greedy}.
As $\mu_\infty>0$ and as $\nu_\infty$ satisfies \eqref{e:greedy} we get, by Theorem \ref{t:gt17}, 
that $\nu_\infty$ is strongly subcritical for percolation (see \eqref{e:nupositif} for the definition of strongly subcritical).
By Lemma \ref{l:subopen} (see Appendix \ref{s:open} below) applied to the admissible maps $\hat\cR, \cR_\infty$ and the $\cR_n^+$ (recall that the greedy condition \eqref{e:greedy} implies condition \eqref{e:momentd}, see \eqref{e:greedy2}), 
we get the existence of $n_0$ such that $\nu_{n_0}^+$ is strongly subcritical for percolation.
As $\nu_{n_0}^+$ satisfies \eqref{e:greedy}, we deduce back by Theorem \ref{t:gt17} that $\mu_{n_0}^+$ is positive.
For all $n \ge n_0$, $\cR_n^+ \le \cR_{n_0}^+$ 
and we can then apply Theorem \ref{t:main} to the admissible maps $\cR_n^+$, $\cR_\infty$ and $\cR^+_{n_0}$.
We get
\begin{equation}\label{e:enfin}
\left|\mu_n^+-\mu_\infty\right| \le C \left(\int_0^\infty \left[\nu^{\cR^+_n-\cR_\infty}([r,+\infty))\right]^{1/d} \d r\right)
\end{equation}
where $C=C(d,\cR_{n_0}^+)$ is provided by Theorem \ref{t:main}.
For all $n \ge n_0$, $\cR_n^+-\cR_\infty$ is non-negative and, for all $r>0$ and all $u>0$,
\begin{equation}\label{e:dom1}
\1_{[r,+\infty)} \big[(\cR_n^+-\cR_\infty)(u)\big]  \le \1_{[r,+\infty)} \big[\cR_{n_0}^+(u)\big].
\end{equation}
Recall
\begin{equation}\label{e:dom2}
\int_0^\infty \1_{[r,+\infty)}\big[ \cR_{n_0}^+(u) \big]\d u = \nu_{n_0}^+([r,+\infty)) < \infty.
\end{equation}
Moreover, for almost all $u$, $\cR_n^+(u)$ converges to $\cR_\infty(u)$.
Therefore,  for almost all $u$,
\begin{equation}\label{e:dom3}
\lim_{n\to\infty} \1_{[r,+\infty)} \big[(\cR_n^+-\cR_\infty)(u)\big] = 0 \text{ for all } r>0.
\end{equation}
By \eqref{e:dom1}, \eqref{e:dom2}, \eqref{e:dom3} we get, using the dominated convergence theorem, for all $r>0$,
\[
\lim_{n \to \infty} \int_0^\infty \1_{[r,+\infty)} \big[(\cR_n^+-\cR_\infty)(u)\big] \d u = 0
\]
that is
\begin{equation}\label{e:dom4}
\lim_{n \to \infty} \nu^{\cR^+_n-\cR_\infty}([r,+\infty)) = 0.
\end{equation}
But, for all $n \ge n_0$ and all $r>0$,  after integrating \eqref{e:dom1} we get
\begin{equation}\label{e:dom5}
\nu^{\cR^+_n-\cR_\infty}([r,+\infty)) \le \nu_{n_0}^+([r,+\infty)).
\end{equation}
As $\nu_{n_0}^+$ satisfies \eqref{e:greedy}, by \eqref{e:dom4} and \eqref{e:dom5} we get, using again the dominated convergence theorem, 
\[
\lim_{n \to \infty} \int_0^\infty \left[\nu^{\cR^+_n-\cR_\infty}([r,+\infty))\right]^{1/d} \d r = 0.
\]
Using \eqref{e:enfin} we then get \eqref{e:mun+} which ends the proof.

\section{Proof of Corollary \ref{c:mainsuitesuite}}

\begin{proof}[Proof of Corollary \ref{c:mainsuitesuite}]
Let $\hat \lambda = \sup \{ \lambda_n , n\in \N \cup \{\infty\} \}$. We have $\hat \lambda <\infty$ since $(\lambda_n)_n$ is convergent. Let $\hat\cR$ be the admissible map associated with $\hat \lambda \P_{\hat R}$ by \eqref{e:skorokhod}.
Note that, with the notations of Section \ref{s:couplings}, $\nu^{\hat\cR} = \hat \lambda \P_{\hat R}$ (see below \eqref{e:skorokhod}).
For all $n \in \N \cup \{\infty\}$ let $\cR_n$ be the admissible map associated with $\lambda_n \P_{R_n}$ by \eqref{e:skorokhod}.
As above, for all $n \in \N \cup \{\infty\}$, $\nu^{\cR_n} = \lambda \P_{R_n}$.
We now check that Theorem \ref{t:mainsuite} applies.
Item 1 of Theorem \ref{t:mainsuite} is a consequence of Item 1 of Corollary \ref{c:mainsuitesuite} and the upper bound $\lambda_n \leq \hat \lambda$ for all $n \in \N \cup \{\infty\}$ (see the paragraph about domination and coupling in Section \ref{s:couplings}).
Item 2 of Theorem \ref{t:mainsuite} is a consequence of Item 2 of Corollary \ref{c:mainsuitesuite}.
Item 3 of Theorem \ref{t:mainsuite} is a consequence of Items 3 and 4 of Corollary \ref{c:mainsuitesuite}
and of a variant of a classical result for the representation of a measure by an admissible map\footnote{Let us give a proof. 
For each $u>0$ write $I(u) = \{r>0 : \lambda_\infty \P[R_\infty \ge r]=u\}$. 
The sets $I(u)$ are pairwise disjoint intervals. Therefore
\[
A = \{u>0 : I(u) \text{ is an interval of positive length}\}
\]
is at most countable. To conclude the proof, it suffices to show that, for any $u \in (0,+\infty) \setminus A$,
\[
\limsup \cR_n(u) \le \cR_\infty(u) \text{ and } \liminf \cR_n(u) \ge \cR_\infty(u).
\]
Let $u \in (0,+\infty) \setminus A$. 
For any $\epsilon>0$ there exists $r \in (\cR_\infty(u), \cR_\infty(u)+\epsilon)$ such that
$r$ is a continuity point of $r \mapsto \P[R_\infty \ge r]$.
For any such $r$, $\lim_n \P[R_n \ge r] = \P [R_\infty \ge r]$ since $\P^{R_n}$ converges weakly to $\P^{R_\infty}$. Since $\lim_n \lambda_n = \lambda_{\infty}$ we get $\lim_n \lambda_n \P[R_n \ge r] = \lambda_{\infty} \P[R_\infty \ge r] < u$ and therefore,
if $n$ is large enough, $\lambda_n \P[R_n \ge r]<u$ and therefore $\cR_n (u) \le r < \cR_\infty(u)+\epsilon$. 
This proves the result about the $\limsup$.
If $\cR_\infty(u)=0$ the result about the $\liminf$ is trivial.
Assume henceforth $\cR_\infty(u)>0$.
For any $\epsilon>0$ there exists $r \in (\cR_\infty(u)-\epsilon, \cR_\infty(u))$ such that
$r$ is a continuity point of $r \mapsto \P[R_\infty \ge r]$ and $\lambda_\infty \P[R_\infty \ge r] \neq u$
(because $u \not \in A$) and then $\lambda_\infty \P[R_\infty \ge r]>u$.
For any such $r$, $\lim_n \lambda_n \P[R_n \ge r] = \lambda_\infty \P[R_\infty \ge r] > u$ and therefore,
if $n$ is large enough, $\lambda_n \P[R_n \ge r]>u$ and therefore $\cR_n (u) \ge r \ge \cR_\infty(u)-\epsilon$. 
This proves the result about the $\liminf$.}.
We can now apply Theorem \ref{t:mainsuite} to conclude.
\end{proof}

\appendix

\section{Some remarks on measurability}
\label{s:mesurabilite}

\paragraph{Space of configurations.}
Let $(\cS,\cA)$ denote the usual set of configurations for point processes in $\R^d \times (0,+\infty)$.
In other words, $\cS$ is the set of locally finite subsets of $\R^d \times (0,+\infty)$ and $\cA$ is the $\sigma$-algebra generated
by the family of all maps $S \mapsto \#(S \cap B)$ where $B$ is a Borel subset of $\R^d \times (0,+\infty)$.

There exists a sequence $(X_n)_n$ of measurable maps from $\cS$ to $\R^d \times (0,+\infty)$ and a measurable map $N$ from $\cS$ to 
$\overline \N = \N \cup \{\infty\}$ such that, for all $S \in \cS$,
\[
S = \{X_i(S), 1 \le i \le N(S)\}
\]
and the $X_i(S), 1 \le i \le N(S)$ are pairwise distinct. We write $X_i (S) = (Y_i (S), R_i (S))$ where $Y_i$ (resp. $R_i$) takes values in $\R^d$ (resp. $(0,+\infty)$).

\paragraph{The map from $\R^d \times \cS$ to $\R$ defined by $(z,S) \mapsto \1_{\Sigma(S)}(z)$ is measurable where $\Sigma(S) = \bigcup_{(c,r) \in S} B(c,r)$.}
Indeed,
\[
\{z \in \Sigma(S)\} = \bigcup_{1 \le i \le N(S)} \{\|z-Y_i(S)\| < R_i(S)\}.
\]

\paragraph{Regularity of the map from $\R^d \times \R^d \times \cS$ to $\R$ defined by $(x,y,S) \mapsto \tau(x,y ; S)$.}
Let $x,y \in \R^d$. The map $S \mapsto \tau(x,y;S)$ is measurable.
If $x=y$ this is straightforward.
If $x \neq y$, setting $u = \|y-x\|^{-1}(y-x)$ we can write
\[
\tau(x,y ; S) = \int_0^1 \|y-x\|\1_{\Sigma(S)}(x+ \|y-x\| t u) \d t.
\]
As the map $(z,S) \mapsto \1_{\Sigma(S)}(z)$ is measurable, the above map is measurable by Fubini theorem.

Let $S \in \cS$. The map $(x,y) \mapsto \tau(x,y;S)$ is continuous.
Let $(x_n)_n$ and $(y_n)_n$ be two sequences of $\R^d$ converging to $x$ and $y$.
It suffices to show that $\tau(x_n,y_n;S)$ converges to $\tau(x,y;S)$.
If $x=y$ this is straighforward as, for any $u,v \in \R^d$, $\tau(u,v ;S) \le \|v-u\|$.
If $x \neq y$ we can write, for any $n$ large enough and with $u_n = \|y_n-x_n\|^{-1}(y_n-x_n)$,
\[
\tau(x_n,y_n ; S) = \int_0^1 \|y_n-x_n\|\1_{\Sigma(S)}(x_n+ \|y_n-x_n\| t u_n) \d t.
\]
The set $M=\{t \in [0,1], x+\|y-x\|tu \in \partial\Sigma(S)\}$ is at most countable as the boundary of $\Sigma(S)$ is contained in the union of 
an at most countable number of spheres.
But $\|y_n-x_n\|\1_{\Sigma(S)}(x_n+ \|y_n-x_n\| t u_n)$ converges to $\|y-x\|\1_{\Sigma(S)}(x+ \|y-x\| t u)$ when $n$ tends to $\infty$ for
any $t \in [0,1] \setminus M$.
The required continuity result then follows from the dominated convergence theorem.

\paragraph{Measurability of the map from $\cS$ to $\R$ defined by $S \mapsto T(a,b ; S)$ for $a,b \in \R^d$ given.}
By continuity of $(x,y) \mapsto \tau(x,y ; S)$ for a given $S$, we can write
\[
T(a,b ; S) = \inf_\pi \tau(\pi ; S)
\]
where the infimum is over all paths from $a$ to $b$ whose points (except maybe $a$ or $b$) have rational coordinates.
The result then follows from the measurability of $S \mapsto \tau(x,y;S)$ for given $x,y$.

\paragraph{Measurability of the map from $\cS$ to $\R$ defined by $S \mapsto T(A,B ; S)$ for $A,B \subset \R^d$ given.}
We will only consider very regular sets $A,B\subset \mathbb{R}^d$ such as balls or spheres. By the triangular inequality we get $|T(a,b ; S) - T(a',b ; S')| \leq T(a,a' ; S) + T(b,b' ; S) \leq \| a-a'\| + \| b-b'\|$, thus the infimum in the definition of $T(A,B ; S)$,
\[
T(A,B;S) = \inf \{  T(a,b;S) : a\in A, b\in B \} 
\]
can be taken along any set of points $A'\subset A$ (resp. $B'\subset B$) that is dense in $A$ (resp. in $B$) for the Euclidean norm. If the sets $A$ and $B$ admit dense subsets that are countable (as it is the case for balls or spheres), then the measurability of $S \mapsto T(A,B ; S)$ follows.

\paragraph{Measurability of the map from $\cS$ to $\R$ defined by $S \mapsto T^\square(a_0,\dots,a_k ; S)$ for $a_0,\dots,a_k$ given.}
We can rewrite
\[
T^\square(a_0,\dots,a_k ; S) = \inf \left\{ \sum_{j=1}^k T(a_{j-1},a_j;	\{X_i(S), i \in  I_j\}) :
I_1, \dots, I_k \text{ are disjoint finite subsets of } \{1,\dots,N(S)\}\right\}.
\]
The result then follows from the previous paragraph (and the fact that for any $I \subset \N$, $S \mapsto \{X_i(S), i \in I\}$ is a measurable
map from $\cS$ to $\cS$ as, for all Borel subset $B$, $\#(B \cap \{X_i(S), i \in I\}) = \sum_{i \in I} \1_{B}(X_i(S))$ is measurable.

\paragraph{Measurability of the map from $\cS$ to $\R$ defined by }
\[ S \mapsto \inf \left\{ \frac{1}{k\rho}T^\square(a_0,\dots,a_k ; S) : k\geq k_0  \text{ and } (a_0,\dots,a_k) 
\text{ is a }\rho\text{-skeleton from }0 \right\}.
\]

Let $(a_0,\dots,a_k), (a'_0,\dots,a'_k) \in (\R^d)^{k+1}$.
For all disjoint finite subsets $I_1, \dots, I_k$ of $\{1,\dots,N(S)\}$, for all $j\in \{1,\dots, k\}$,
\begin{align*}
T(a'_{j-1},a'_j & ;\{X_i(S), i \in  I_j\}) \\
& \le T(a'_{j-1},a_{j-1};\{X_i(S), i \in  I_j\}) + T(a_{j-1},a_j;\{X_i(S), i \in  I_j\})  + T(a_j,a'_j;\{X_i(S), i \in  I_j\}) \\
& \le T(a_{j-1},a_j;\{X_i(S), i \in  I_j\}) + \|a'_{j-1}-a_{j-1}\|+\|a'_j-a_j\|.
\end{align*}
Summing on $j$ and taking the infimum over the disjoint finite subsets $I_1, \dots, I_k$ of $\{1,\dots,N(S)\}$ we get
\[
T^\square(a'_0,\dots,a'_k ; S) \le T^\square(a_0,\dots,a_k ; S) + 2 \sum_j \|a_j-a'_j\|.
\]
By symmetry, we get the reverse inequality and thus
\[
|T^\square(a_0,\dots,a_k ; S) - T^\square(a'_0,\dots,a'_k ; S)| \le 2 \sum_j \|a_j-a'_j\|.
\]
Thanks to this Lipschitz property, the quantity we are interested can be written as an infimum of measurable maps
over a countable set of skeletons.

\section{Openness of the strongly subcritical regime for percolation}
\label{s:open}

The definition of strongly subcritical is given en \eqref{e:nupositif}. It applies to a finite measure $\nu$ on $(0,+\infty)$, but via the correspondance between such measures and admissible maps (see Section \ref{s:couplings}), it also applies to admissible maps.
The aim of this section is to give a proof of the following result.
The result is essentially not new.

\begin{lemma}  \label{l:subopen}
Let $\hat\cR$ and $\cR_\infty$ be two admissible maps. 
Let $(\cR_n)_n$ be a sequence of admissible maps.
Assume the following.
\begin{enumerate}
\item $\cR_\infty \le \hat\cR$ and, for all $n$, $\cR_n \le \hat\cR$.
\item $\int_{(0,+\infty)} \hat\cR(u)^d \d u$ is finite.
\item $\cR_\infty$ is strongly subcritical for percolation.
\item $\cR_n$ converges almost everywhere to $\cR$.
\end{enumerate}
Then there exists $n_0$ such that, for all $n \ge n_0$, $\cR_n$ is strongly subcritical for percolation.
\end{lemma}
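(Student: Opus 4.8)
The plan is to relate the strongly subcritical condition \eqref{e:nupositif} for each admissible map to the occurrence of a crossing event in a \emph{bounded} window, so that the convergence $\cR_n \to \cR_\infty$ a.e.\ can be transferred to the crossing probabilities. Concretely, fix $\epsilon>0$. Because $\cR_\infty$ is strongly subcritical, there exists $r_0$ such that for all $r \ge r_0$ the probability that $\Sigma(\nu^{\cR_\infty})$ contains a path from $B(0,r)$ to $B(0,2r)^c$ is at most $\epsilon/2$. Fix such an $r=r(\epsilon)$. The crossing event depends only on the balls that meet $\overline{B(0,2r)}$, i.e.\ on the restriction of the point process to the set $U = \{(c,t) : \|c\| \le t + 2r\}$, which has finite $\d c \otimes \nu^{\hat\cR}$-measure by Hypothesis 2 (the $d$-th moment of $\hat\cR$ is finite, since $\int \nu^{\hat\cR}([t,+\infty))\,\d t \le \int \hat\cR(u)^d\,\d u$ up to a dimensional constant — or simply argue directly from $\int_U \d c\,\nu^{\hat\cR}(\d t) = \kappa_d \int \hat\cR(u)^d\,\d u$ for a constant $\kappa_d$).

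First I would set up the coupling of Section \ref{s:couplings}: build all the $\xi^{\cR_n}$ and $\xi^{\cR_\infty}$ from a single Poisson process $\Xi$ on $\R^d\times(0,+\infty)$ with Lebesgue intensity, so that $r^{\cR_n}(c) \to r^{\cR_\infty}(c)$ for every $c$ on a probability-one event (this is exactly the argument of Claim \ref{claim:cvrn-}: the Lebesgue-null exceptional set of $\cR_n \not\to \cR_\infty$ a.e.\ lifts to a set carrying no points of $\Xi$ a.s.). Next, restricted to $U$, the process $\Xi \cap U$ has finitely many points a.s., so we may enumerate them $(c_1,u_1),\dots,(c_m,u_m)$; on the full-probability event above, $\cR_n(u_\ell) \to \cR_\infty(u_\ell)$ for each $\ell$. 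The key observation is a semicontinuity statement for the crossing event: if there is \emph{no} $\Sigma(\xi^{\cR_\infty})$-path from $B(0,r)$ to $B(0,2r)^c$, and this "no-crossing" is witnessed with some room to spare, then for $n$ large the finitely many radii $\cR_n(u_\ell)$ are close enough to $\cR_\infty(u_\ell)$ that there is still no $\Sigma(\xi^{\cR_n})$-path across. Making this precise is the delicate point: I would argue that, a.s.\ on the complement of the crossing event for $\cR_\infty$, the compact sets $\overline{B(0,r)}$ and $B(0,2r)^c$ lie in distinct connected components of the closure of $\Sigma(\xi^{\cR_\infty}) \cup (\text{a neighbourhood})$, hence there is a positive "gap", and shrinking/growing each of the finitely many radii by less than half this gap cannot create a crossing. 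Equivalently and more cleanly: the indicator of "$\exists$ crossing for the configuration with radii $(t_\ell)_\ell$" is, outside a null set of radius-vectors, upper semicontinuous in $(t_\ell)_{\ell=1}^m$ at $(\cR_\infty(u_\ell))_\ell$, because a crossing uses only finitely many balls and the crossing property of a fixed finite family of open balls is an open condition in their radii. Since there are only countably many relevant finite sub-configurations, the exceptional set of radius-vectors is null, and on the full-probability event we avoid it.

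From this I get: on a probability-one event, $\1[\exists\ \xi^{\cR_n}\text{-crossing from }B(0,r)\text{ to }B(0,2r)^c] \to 0$ is false in general — rather, $\limsup_n \1[\cdots] \le \1[\exists\ \xi^{\cR_\infty}\text{-crossing}]$ pointwise. Taking expectations and using dominated convergence (the indicators are bounded by $1$, and by Hypothesis 1 all configurations are dominated by $\xi^{\hat\cR}$, so one could also dominate by the $\xi^{\hat\cR}$-crossing event, but boundedness alone suffices), I obtain
\[
\limsup_n \P[\exists\ \xi^{\cR_n}\text{-path from }B(0,r)\text{ to }B(0,2r)^c] \le \P[\exists\ \xi^{\cR_\infty}\text{-path from }B(0,r)\text{ to }B(0,2r)^c] \le \epsilon/2.
\]
Hence there is $n_0=n_0(\epsilon)$ with $\P[\exists\ \xi^{\cR_n}\text{-crossing from }B(0,r)\text{ to }B(0,2r)^c] \le \epsilon$ for all $n \ge n_0$. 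To conclude that each such $\cR_n$ is strongly subcritical (i.e.\ that the $\limsup$ over the scale is $0$, not just small at scale $r$), I would invoke the standard monotonicity/submultiplicativity for these crossing probabilities in the subcritical Boolean model: the probability of a crossing from $B(0,\rho)$ to $B(0,2\rho)^c$, once small enough at one scale, decays along scales — this is precisely the mechanism behind \eqref{e:nupositif} being a "$\limsup = 0$" condition rather than an "infinitely small at one scale" condition, and it is the content (for fixed $\nu$) of the results of \cite{Gouere-Theret-17}. Thus a single $\epsilon$ small enough in the above yields strong subcriticality of $\cR_n$ for all $n \ge n_0$.

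The main obstacle is the semicontinuity step: showing that perturbing finitely many ball-radii by a small amount cannot turn a non-crossing configuration into a crossing one, uniformly enough to survive the a.e.\ convergence. The clean way is to observe that "there is a path inside $\bigcup_\ell B(c_\ell, t_\ell)$ from $A$ to $B$", for fixed points $c_\ell$ and fixed compact $A,B$, holds iff $A$ and $B$ are connected in the corresponding intersection graph on $\{1,\dots,m\}\cup\{A,B\}$ (edges when two balls overlap, or a ball meets $A$ or $B$) — a condition defined by finitely many strict or non-strict inequalities among the $t_\ell$. The set of radius-vectors lying on a boundary hyperplane of this description is Lebesgue-null, and the a.e.\ convergence of $\cR_n$ means that a.s.\ the limiting radius-vector avoids it, so the indicator is continuous there. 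Handling the countable union over which finite sub-configuration of $\Xi \cap U$ is relevant, and checking measurability of the exceptional set (cf.\ Appendix \ref{s:mesurabilite}), are routine once this is in place.
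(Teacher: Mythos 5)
Your plan follows essentially the same route as the paper's proof: couple all the processes through a single Poisson process $\Xi$, observe that the crossing event at a fixed scale depends only on the finitely many balls meeting the window, transfer the almost everywhere convergence $\cR_n\to\cR_\infty$ to almost sure convergence of those finitely many radii (Claim \ref{claim:cvrn-}), discard a null set of degenerate configurations so that the crossing indicator converges, and conclude $\limsup_n \P[C_n]\le\P[C_\infty]$; this is exactly the paper's argument, which handles the degenerate configurations by the a.s.\ absence of tangencies (your ``boundary hyperplane'' set is null because the \emph{centers} have a continuous distribution, not because of the a.e.\ convergence of $\cR_n$ --- the justification you give for avoiding the null set is the one slightly muddled point, though the conclusion is right). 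The place where your write-up has a real gap is the final upgrade from ``crossing probability small at one scale'' to ``strongly subcritical''. That implication is false as stated: the finite-size criterion of \cite{Gouere-Theret-17}, restated in the paper as Lemma \ref{l:lemme-ouvert-GT17}, requires \emph{two} inputs at the chosen scale $M$, namely the smallness of the crossing probability \eqref{e:chene} \emph{and} the large-ball tail condition \eqref{e:charme}, i.e.\ $K^2\int \cR(u)^d\1_{[M,+\infty)}(\cR(u))\,\d u\le 1/4$; without the latter, very large balls ruin the renormalization inequality, so smallness at one scale alone does not propagate. In your scheme you choose the scale $r(\epsilon)$ using only the subcriticality of $\cR_\infty$; you must in addition take it large enough that the tail condition holds for $\hat\cR$ (possible by hypothesis 2 and dominated convergence), which then holds simultaneously for every $\cR_n$ since $\cR_n\le\hat\cR$ --- this uniformity in $n$ is precisely why the domination hypothesis is needed at this step, and not merely the per-$n$ results of \cite{Gouere-Theret-17} applied ``for fixed $\nu$''. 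With that addition your argument closes and coincides with the paper's proof.
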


The following result is a rephrasing of the core of Proposition A.2 and Lemma A.3 from \cite{Gouere-Theret-17} with the notations of the current paper.

\begin{lemma} \label{l:lemme-ouvert-GT17}
There exists $K=K(d)$ such that the following holds.
Let $\cR$ be an admissible map satisfying
\begin{equation}\label{e:map-moment}
\int_0^{+\infty} \cR(u)^d \d u < \infty.
\end{equation}
Let $\nu^\cR$ be the associated measure and let $\Sigma=\Sigma(d,\nu^\cR)$ be a Boolean model driven by $\nu^\cR$.
Let $M>0$.
Assume
\begin{equation}\label{e:charme}
K^2 \int \cR^d(u) \1_{[M,+\infty)}(\cR(u)) \d u \le \frac 1 4
\end{equation}
and
\begin{equation}\label{e:chene}
K \P[\text{There exists a path inside } \Sigma \text{ from } B(0,M) \text{ to } B(0,2M)^c\big] \le \frac 1 2.
\end{equation}
Then 
\begin{equation}\label{e:acacia}
\P[\text{There exists a path inside } \Sigma\text{ from } B(0,\alpha) \text{ to } B(0,2\alpha)^c\big] \to 0 \text{ as } \alpha \to \infty.
\end{equation}
In other words, $\nu^\cR$ is strongly subcritical for percolation.
\end{lemma}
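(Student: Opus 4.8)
The plan is to show that the annulus-crossing probability
\[
q(\alpha):=\P\big[\text{there exists a path inside }\Sigma\text{ from }B(0,\alpha)\text{ to }B(0,2\alpha)^c\big]
\]
tends to $0$ as $\alpha\to\infty$, which is exactly \eqref{e:acacia}. This is the content of Proposition A.2 and Lemma A.3 of \cite{Gouere-Theret-17}, and the argument is a multiscale renormalisation in the spirit of Gouéré's work on subcritical continuum percolation, separating the balls of the Boolean model according to their size. The constant $K=K(d)$ is fixed only at the end, large enough to dominate the various dimension-dependent constants produced by the geometry below.

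The heart of the proof is a recursive inequality on $q$. Fix $L>0$ and split $\Sigma=\Sigma^{<L/3}\cup\Sigma^{\ge L/3}$ according to whether a ball has radius $<L/3$ or $\ge L/3$. Consider a path inside $\Sigma$ from $B(0,2L)$ to $B(0,4L)^c$; stopping it at its first exit of $B(0,4L)$ we may assume it uses only balls meeting $\overline{B(0,4L)}$. Then either (i) such a path already exists inside $\Sigma^{<L/3}$, or (ii) some ball of radius $\ge L/3$ meets $\overline{B(0,4L)}$. The probability of (ii) is at most the expected number of such balls, bounded by $K_0(d)\int_{r\ge L/3}r^d\,\nu^{\cR}(\d r)=:\mathrm{err}(L)$; by \eqref{e:map-moment} this tends to $0$ as $L\to\infty$, and for $L\ge 3M$ it is at most $K_0(d)\int_{r\ge M}r^d\,\nu^{\cR}(\d r)$, which is small by \eqref{e:charme}. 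For (i), the fact that all radii are $<L/3$ lets us split the annulus $\{2L\le\|x\|\le 4L\}$ into two sub-annuli $A_1,A_2$ of width $L/2$ separated by a gap of width $L$: a path crossing the big annulus inside $\Sigma^{<L/3}$ must cross both $A_1$ and $A_2$, and since balls meeting $A_1$ and balls meeting $A_2$ have centres in disjoint regions, these two crossing events are independent. Covering the relevant spheres by $K_1(d)$ balls of a radius comparable to $L$ and using a union bound, each of these events has probability at most $K_1(d)\,q(cL)$ for a dimensional constant $c$. Altogether, for $L\ge 3M$,
\[
q(2L)\le \big(K_1(d)\big)^2\,q(cL)^2+\mathrm{err}(L).
\]

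It remains to bootstrap. First, \eqref{e:chene} says precisely that $q(M)\le \tfrac{1}{2K}$. Combining the recursion with \eqref{e:charme} (which keeps $\mathrm{err}(L)$ below a small fixed fraction of $\tfrac{1}{K}$ for all $L\ge 3M$) and a crude a priori bound over the bounded band of initial scales, an induction gives $q(\alpha)\le \tfrac{1}{2K}$ for every $\alpha\ge M$. Re-injecting this uniform smallness into the recursion and now using that $\mathrm{err}(L)\to 0$: whenever $q\le a$ on all large scales one gets $\limsup_\alpha q(\alpha)\le K_1(d)^2 a^2$, which is strictly smaller than $a$ as soon as $a<K_1(d)^{-2}$; iterating this self-improvement forces $q(\alpha)\to 0$. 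Intermediate scales not directly produced by the recursion are handled by a standard comparison between crossing events at comparable scales, and this yields \eqref{e:acacia}.

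The main obstacle is the renormalisation step: one must arrange the geometric decomposition so that, simultaneously, the leftover crossings genuinely decouple into independent pieces at a comparable scale, the error at scale $L$ involves only balls of radius of order $L$ (so that it vanishes as $L\to\infty$ via the moment condition \eqref{e:map-moment}), and the accumulated error over all scales $\ge M$ remains controlled by the single small quantity in \eqref{e:charme}. The unboundedness of the radii is exactly what makes this bookkeeping delicate, and it is what dictates the precise form of the hypotheses \eqref{e:charme} and \eqref{e:chene} and the final choice of $K=K(d)$.
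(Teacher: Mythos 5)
Your outline is essentially correct, but it takes a genuinely different route from the paper: the paper does not redo the renormalisation at all. Its proof of this lemma is a pure reduction to Proposition A.2 and Lemma A.3 of \cite{Gouere-Theret-17}: it sets $K=K_1K_2$, where $K_1$ is the constant of Proposition A.2 and $K_2$ is a covering constant, and the only genuine work is (i) upgrading \eqref{e:chene}, which lives at the single scale $M$, to the whole band of scales $[10M,100M]$ by covering the spheres $S(0,\alpha)$ with at most $K_2$ balls of radius $M$ and using a union bound plus stationarity, and (ii) translating \eqref{e:charme} into the moment hypothesis (A.6) of that appendix via the identity $\lambda\epsilon(\alpha)=\int \cR^d(u)\1_{[\alpha,+\infty)}(\cR(u))\,\d u$; Lemma A.3 then yields $\Pi(\alpha)\to 0$ and hence \eqref{e:acacia}. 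What you do instead is reconstruct the content of that appendix: the recursion $q(2L)\le K_1(d)^2\,q(cL)^2+\mathrm{err}(L)$, with balls of radius $\ge L/3$ absorbed into an error term controlled by the tail moment, decoupling through two sub-annuli whose relevant centre regions are disjoint, a bootstrap seeded by \eqref{e:chene}, and a final self-improvement using $\mathrm{err}(L)\to 0$ from \eqref{e:map-moment}. This buys a self-contained argument at the price of redoing the work of \cite{Gouere-Theret-17} and of some bookkeeping you pass over quickly: since your recursion compares scale $2L$ with the much smaller scale $cL$, the induction cannot start from the single value $q(M)\le \frac{1}{2K}$; the base case must be a full band of scales (say $[10M,160M]$), obtained exactly by the covering/union-bound comparison you allude to, where the bound is $K_2(d)\,q(M)$ rather than $q(M)$, and the recursion must be arranged never to dip below that band so that \eqref{e:charme} controls $\mathrm{err}$ at every step; finally $K(d)$ must be chosen large enough to absorb $K_1$, $K_2$ and the threshold $a<K_1(d)^{-2}$ needed for the fixed-point step $\ell\le K_1^2\ell^2\Rightarrow\ell=0$. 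These are precisely the points that the appendix of \cite{Gouere-Theret-17} organises through the quantities $\Pi(\alpha)$ and $\epsilon(\alpha)$, which is why the paper chooses to quote it rather than reprove it.
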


\begin{proof} 
Let us start by comparing the notations and the settings of \cite{Gouere-Theret-17} and Lemma \ref{l:lemme-ouvert-GT17}.
In \cite{Gouere-Theret-17} the Boolean model is driven by a measure denoted by $\lambda \nu$ 
where $\lambda \in (0,+\infty)$ and $\nu$ is a probability measure on $(0,+\infty)$ whose $d$-moment is finite.
In Lemma \ref{l:lemme-ouvert-GT17}, the Boolean  model is driven by $\nu^{\cR}$
where $\cR$ is an admissible map satisfying \eqref{e:map-moment} and thus $\nu^{\cR}$ is a finite measure on $(0,+\infty)$ whose $d$-moment is finite. 
Therefore, whenever $\nu^{\cR}[(0,+\infty)]>0$ we are in the framework of \cite{Gouere-Theret-17} by setting 
$\lambda=\nu^{\cR}[(0,+\infty)]$ and $\nu = \lambda^{-1} \nu^{\cR}$.
When, on the contrary, $\nu^{\cR}[(0,+\infty)]=0$, everything is trivial and we will not mention further this case.

Let $K_1=K_1(d)$ be given by Proposition A.2 of \cite{Gouere-Theret-17}.
Let $K_2=K_2(d)$ be such that, 
for all $\beta \in [10,100]$, 
there exists a set $A \subset S(0,\beta)$ of cardinality at most $K_2(d)$ such that $S(0,\beta) \subset \cup_{a \in A} B(a,1)$.
Set $K=K(d)=K_1K_2$.

Let $\cR$ be an admissible map satisfying \eqref{e:map-moment}.
Let $M>0$.
Assume that \eqref{e:charme} and \eqref{e:chene} hold.
We have to prove that \eqref{e:acacia} holds.

For all $\alpha \in [10M,100M]$, we have
\begin{equation}\label{e:chene-2}
K_1 \P[\text{There exists a path inside } \Sigma \text{ from } B(0,\alpha) \text{ to } B(0,2\alpha)^c\big] \le \frac 1 2.
\end{equation}
This is a simple and classical consequence of \eqref{e:chene}.
Set $\beta = \alpha M^{-1} \in [10,100]$.
There exists $A \subset S(0,\beta)$ of cardinality at most $K_2(d)$ such that $S(0,\beta) \subset \cup_{a \in A} B(a,1)$.
Multiplying by $M$ and using $M\beta = \alpha$ we thus have 
\[
S(0,\alpha) \subset \cup_{a \in A} B(Ma, M).
\]
Moreover, for all $a \in A$,
\[
B(Ma,2M) \subset B(0,2\alpha).
\]
Indeed $\|Ma\|=\alpha$ and $2M < 10M \le \alpha$.
Therefore, if there exists a path in $\Sigma$ from $B(0,\alpha)$ to $B(0,2\alpha)^c$, then there exists $a \in A$ and a path in $\Sigma$ from
$B(Ma,M)$ to $B(Ma,2M)^c$. By a union bound and by stationarity, we thus get
\begin{align*}
& K_1 \P[\text{There exists a path inside } \Sigma\text{ from } B(0,\alpha) \text{ to } B(0,2\alpha)^c\big]  \\
& \le K_1K_2 \P[\text{There exists a path inside } \Sigma\text{ from } B(0,M) \text{ to } B(0,2M)^c\big] \\
& \le \frac 1 2
\end{align*}
by \eqref{e:chene} as $K=K_1K_2$. Therefore \eqref{e:chene-2} holds.
Applying (A.2) of \cite{Gouere-Theret-17} we get 
\begin{equation}\label{e:chene-3}
\forall \alpha \in [10M,100M], K_1 \Pi(\alpha) \le \frac 1 2 
\end{equation}
where $\Pi$ is defined 
above Proposition A.2 in \cite{Gouere-Theret-17}.

Moreover, by \eqref{e:charme} and as $K_1 \le K$,
\begin{equation}\label{e:charme-2}
K_1^2 \int \cR^d(u) \1_{[10M,+\infty)}(\cR(u)) \d u \le  K^2 \int \cR^d(u) \1_{[10M,+\infty)}(\cR(u)) \d u \le \frac 1 4.
\end{equation}

The map $\epsilon$ is defined by (A.1) in \cite{Gouere-Theret-17}.
For all $\alpha>0$,
\begin{equation}\label{e:epsilon0}
\lambda \epsilon(\alpha) = \int_{[\alpha,+\infty)} r^d \nu^\cR(\d r) = \int \cR^d(u) \1_{[\alpha,+\infty)}(\cR(u)) \d u 
\end{equation}
where the last equality is a consequence of the definition of $\nu^\cR$.
Note that \eqref{e:chene-3} is (A.7) in \cite{Gouere-Theret-17} for $10M$ instead of $M$.
Moreover, (A.6) in \cite{Gouere-Theret-17} for $10M$ instead of $M$ is a consequence of \eqref{e:charme-2} and \eqref{e:epsilon0}.
Therefore, applying Lemma A.3 in \cite{Gouere-Theret-17}, we get 
\begin{equation}\label{e:Pi0}
\lim_{\alpha\to\infty} \Pi(\alpha)=0.
\end{equation}
By (A.2) in \cite{Gouere-Theret-17}, \eqref{e:Pi0}, \eqref{e:epsilon0} and \eqref{e:map-moment} we thus get \eqref{e:acacia}.
\end{proof}

\begin{proof}[Proof of Lemma \ref{l:subopen}]
Let $\Xi$ be a Poisson point process on $\R^d \times (0,+\infty)$ with intensity measure the Lebesgue measure.
As in Section \ref{s:couplings}, we build a point process $\hat\xi$ using $\Xi$ and $\hat\cR$ and introduce an appropriate random map $\hat r$ such that
\[
\hat\xi = \{(c,\hat r(c)), c \in \R^d \text{ such that } \hat r(c) >0\}.
\]
As in the proof of Theorem \ref{t:main}, we have shortened our notations.
In a similar way and with the same kind of notations, we define $\xi_n$ with $\cR_n$ for all $n \in \overline\N = \N \cup \{\infty\}$.
Thus, for all $n \in \overline\N$,
\[
\xi^n = \{(c,r_n(c)), c \in \R^d \text{ such that } r_n(c) >0\}.
\]
Moreover
\begin{equation}\label{e:pluspetit}
\forall n \in \overline\N, \forall c \in \R^d, r_n(c) \le \hat r(c).
\end{equation}
We denote by $\Sigma^n$ the Boolean model associated with $\xi^n$.
For all $n \in \overline\N$, define the event 
\[
C_n = \{\text{There exists a path in } \Sigma_n \text{ from } B(0,M) \text{ to } B(0,2M)^c\}.
\]
Let $K$ be the constant given by Lemma \ref{l:lemme-ouvert-GT17}.
By Items 2 and 3 of Lemma \ref{l:subopen}, there exists $M>0$ such that
\begin{equation}\label{e:it2}
K^2 \int \hat\cR^d(u) \1_{[M,+\infty)} (\hat\cR(u))\d u \le \frac 1 4
\end{equation}
and
\begin{equation}\label{e:it3}
K \P[C_\infty] \le \frac 1 4.
\end{equation}

Write
\[
\cC = \{c \in \R^d \text{ such that } \hat r(c) >0 \text{ and } B(c,\hat r(c)) \text{ touches } \overline{B(0,2M)}\}.
\]
For all $n \in \overline\N$, the event $C_n$ only depends on random balls that touch $\overline{B(0,2M)}$.
By \eqref{e:pluspetit}, it therefore only depends on the sequence $(r_n(c))_{c \in \cC}$.

By Item 4 of Lemma \ref{l:subopen}, there exists a full probability event $G_1$ on which, 
for all $c \in \R^d$, $r_n(c)$ converges to $r(c)$.
We refer to the proof of Claim \ref{claim:cvrn-} for a proof.
By the second item of Lemma \ref{l:subopen}, the expected value of the cardinality of $\cC$ is finite.
Therefore there exists a full probability event $G_2$ on which  $\cC$ is finite.
Finally there also exists a full probability event $G_3$ on which:
\begin{itemize}
\item For all distinct $(c,r)$ and $(c',r')$ in $\xi_\infty$, $B(c,r)$ and $B(c',r')$ are not tangent.
\item For all $(c,r)$ in $\xi_\infty$, $B(c,r)$ is neither tangent to $S(0,M)$ nor to $S(0,2M)$.
\end{itemize}
We work on the full probability event $G=G_1 \cap G_2 \cap G_3$.
On $G$, the finite sequence $(r_n(c))_{c \in \cC}$ converges to $(r_\infty(c))_{c \in \cC}$ and therefore 
$\1_{C_n}$ converges to $\1_{C_\infty}$.
Therefore $\P[C_n]$ converges to $\P[C_\infty]$.
From \eqref{e:it3} we deduce the existence of $n_0$ such that, for all $n \ge n_0$, $\cR_n$ satisfies \eqref{e:chene}.
By \eqref{e:it2} and Item 1 of Lemma \ref{l:subopen}, for all $n \ge n_0$, $\cR_n$ satisfies \eqref{e:charme}.
Finally, by Items 1 and 2 of Lemma \ref{l:subopen}, for all $n \ge n_0$, $\cR_n$ satisfies \eqref{e:map-moment}.
By Lemma \ref{l:lemme-ouvert-GT17} we deduce that, for all $n \ge n_0$, $\cR_n$ is strongly subcritical.
\end{proof}

\section{BK inequality}
\label{s:bk}

\subsection{Framework and result}

\paragraph{Disjoint occurrence.}
Let $p \ge 1$.
Let $\cS$ denote here the set of finite subsets of $\R^p$.
Let $\cF$ denote the usual $\sigma$-field for finite point processes on $\R^p$.
This is the $\sigma$-field on $\cS$ generated by the application $S \mapsto \card(S \cap A)$ for Borel sets $A \subset \R^p$.
We say that $F \in \cF$ is increasing if, for any configuration $S$ in $F$, any configuration $S' \supset S$ also belongs to $F$.
If $F_1, \dots, F_m$ are $m$ increasing elements of $\cF$, we define the disjoint occurrence of $F_1,\dots,F_m$ by
\[
F_1 \circ \cdots \circ F_m = \{S \in \cS : \text{ there exists pairwise disjoint } S_1, \dots, S_m \subset S \textrm{ s.t. } S_1 \in F_1, \dots, S_m \in F_m\}.
\]
\paragraph{Measurability issues.}
There exists a sequence $(X_n)_n$ of measurable maps from $(\cS,\cF)$ to $\R^p$ and a measurable map $N$ from $(\cS,\cF)$ to $\N$ such that,
for all $S \in \cS$,
\[
S = \{X_1(S), \dots, X_{N(S)}(S)\} \text{ and } X_1(S), \dots, X_{N(S)}(S) \text{ are distinct.}
\]
Therefore, $F_1 \circ \cdots \circ F_m$ is also the set of all configurations $S \in \cS$ such that there exists pairwise disjoint 
$I_1, \dots, I_m \subset \{1,\dots,N(S)\}$ for which
\[ 
\{X_i(S), i \in I_1\} \in F_1, \dots, \{X_i(S), i \in I_m\} \in F_m.
\]
This rewriting enables to check that $F_1 \circ \cdots \circ F_m$ belongs to the $\sigma$-field $\cF$.

\paragraph{Main result.} 
We now state our main result. This is a version of BK inequality for Poisson point processes which is similar to the version given in the discrete setting by Kesten in its Saint-Flour lecture notes \cite{Kesten-saint-flour}.
As in all the proofs of BK inequality we are aware of, our proof is fundamentally relying on some splitting techniques.
However, contrary to other proofs in the continuum setting ,we do not perform any discretization step and the key property is captured in a purely combinatorial lemma.
\begin{lemma} \label{l:bk}
Let $p \ge 1$ and $m \ge 2$.
Let $\varpi$ be a finite measure on $\R^p$ with no atoms.
Let $\chi_1, \dots, \chi_m$ be independent Poisson point processes on $\R^p$ with intensity measure $\varpi$.
Let $(F_i^t)_{i \in \{1,\dots,m\}, t \in T}$ be a family of increasing events on $\cF$.
We assume that $T$ is finite or countable.
Then
\[
\P\left[\bigcup_{t \in T}\{ \chi_1 \in F_1^t \circ \cdots \circ F_m^t\}\right] 
\le 
\P\left[\bigcup_{t \in T}\{ \chi_1 \in F_1^t,  \dots, \chi_m \in  F_m^t\}\right].
\]
\end{lemma}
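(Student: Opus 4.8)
The plan is to keep the ambient space $\R^p$ continuous throughout and to reduce the inequality, via associativity of disjoint occurrence and Poissonization, to a finite combinatorial statement that is then proved by an induction on the number of points, playing here the role that induction on the number of coordinates plays in the classical van den Berg--Kesten argument.

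\textbf{Reduction to two processes.} First I would record that disjoint occurrence is associative for increasing events: $F_1^t\circ\cdots\circ F_m^t=(F_1^t\circ\cdots\circ F_{m-1}^t)\circ F_m^t$, since a disjoint witness $(S_1,\dots,S_m)$ for the left side yields the disjoint witness $(S_1\cup\cdots\cup S_{m-1},\,S_m)$ for the right side, and conversely; moreover $G^t:=F_1^t\circ\cdots\circ F_{m-1}^t$ is again increasing. Then I would argue by induction on $m$: conditioning on $\chi_m$, the index set $T'=\{t\in T:\chi_m\in F_m^t\}$ is measurable, and since $\chi_1,\dots,\chi_{m-1}$ are independent of $\chi_m$, applying the inductive hypothesis with the frozen family $(F_i^t)_{t\in T'}$ and integrating over $\chi_m$ deduces the statement for $m$ from the statement for $m-1$ together with the two-process inequality
\[
\P\Big[\chi_1\in\bigcup_{t\in T}G^t\circ F^t\Big]\ \le\ \P\big[\exists\,t\in T:\ \chi_1\in G^t\ \text{and}\ \chi_2\in F^t\big],
\]
valid for $\chi_1,\chi_2$ independent $\mathrm{Poisson}(\varpi)$ and $(G^t)_{t},(F^t)_{t}$ increasing families.

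\textbf{Poissonization and the combinatorial lemma.} To prove the two-process inequality I would realize a $\mathrm{Poisson}(\varpi)$ process as a $\mathrm{Poisson}(\varpi(\R^p))$ number of i.i.d.\ points with common law $\varpi/\varpi(\R^p)$; since $\varpi$ has no atoms these points are almost surely pairwise distinct, so conditioning on all the relevant point counts turns both sides into expressions involving only finitely many i.i.d.\ points, together with independent auxiliary points used to pad the constructed processes up to full intensity. The inequality then reduces to a purely combinatorial lemma about a finite list of distinct points: the probability that one such list realizes $\bigcup_{t}G^t\circ F^t$ is dominated by the probability that an independent pair of such lists realizes $G^t$ (for the first) and $F^t$ (for the second) for one common $t$. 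I would prove this lemma by induction on the total number of points, isolating one point at a time, distinguishing whether and on which side of a disjoint witness it is used, and comparing the resulting sub-configurations through a van den Berg--Kesten-style mirroring; since every $G^t$ and $F^t$ is increasing, the same dichotomy can be carried out simultaneously for the whole family, which is what makes the union over $T$ harmless.

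\textbf{Main obstacle.} The delicate point is running this mirroring argument directly on continuous configurations --- arranging it so that the no-atoms hypothesis is used only to guarantee almost-sure distinctness of the sampled points, with no discretisation of $\R^p$ --- while keeping the comparison uniform over $T$. Everything else (the associativity reduction to two processes, the Poissonization, the bookkeeping for the points belonging to neither witness, and the verification that the padded processes are genuinely independent $\mathrm{Poisson}(\varpi)$ processes) is routine.
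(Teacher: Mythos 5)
Your reduction to $m=2$ (associativity of $\circ$, then conditioning on $\chi_m$ and inducting over $m$, the frozen index set $T'=\{t:\chi_m\in F_m^t\}$ being countable and independent of $\chi_1,\dots,\chi_{m-1}$) is correct, though the paper does not need it, and your Poissonization with the non-atomicity of $\varpi$ used only to guarantee distinct points matches the paper's setup. The genuine gap is exactly at the point you yourself label ``the delicate point'': you never supply the mechanism that makes a van den Berg--Kesten splitting run on continuous configurations. After conditioning on the point counts the locations are still random, the two sides of your ``purely combinatorial lemma'' involve different point collections (one list versus an independent pair whose sizes are independent Poisson variables, so the counts do not even match after conditioning), and the sketch ``isolating one point at a time \dots mirroring'' does not specify what is resampled, why the modified configuration is again an independent $\mathrm{Poisson}(\varpi)$ process, or how the ``padding'' points are added without destroying independence. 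As stated, the lemma you defer to is not combinatorial at all: it is essentially the two-process inequality itself. This is precisely the obstruction that earlier continuum proofs handled by discretizing space \cite{BezuidenhoutGrimmett91,vdB96}, which you explicitly forbid yourself.

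The paper's proof resolves this with a device absent from your proposal: realize all $m$ processes simultaneously as a single Poisson process of intensity $m\varpi$, i.e.\ a Poisson$(m\lambda)$ number of i.i.d.\ points $X_i$ carrying i.i.d.\ uniform colors $U_i\in\{1,\dots,m\}$, with $\chi_a=\{X_i:U_i=a\}$. Conditioning on $(N,X)$ fixes the ``sites'' and leaves the colors as the only randomness; the left-hand event becomes ``some admissible rule $(R_1,\dots,R_m)$ has all its sites colored $1$'' and the right-hand event ``some admissible rule has its sites correctly colored,'' and the comparison is the truly combinatorial Lemma~\ref{l:combinatoire}, proved by interpolating one site at a time between the two coloring constraints (this is where the disjointness of $R_1,\dots,R_m$ enters). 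The union over the countable set $T$ is harmless because $T$ is absorbed into the set of rules $\cR(N,X)$ before any comparison is made, and the whole argument treats all $m$ processes at once, so no induction on $m$ is needed. Without this coloring step, or a fully worked-out substitute for the continuous mirroring, your argument does not close; everything you classify as routine is indeed routine, but the part you postpone is the theorem.
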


For a proof of standard BK inequality for Poisson point processes, we refer to the paper by van den Berg \cite{vdB96} (see also the book by Meester and Roy \cite{Meester-Roy-livre}) and references therein. 
Earlier proofs of BK inequality for Poisson point processes (see for instance Bezuidenhout and Grimmett \cite{BezuidenhoutGrimmett91}) were built in three steps:
partition the space in small cells, 
apply the classical BK inequality to a collection of independent variables indexed by these cells
and then take the limit of the desired probabilities as the size of the cells goes to 0.
This approach, through the use of a limit in the third step, requires some extra regularity assumptions on the events we can consider. 
Van den Berg offered in \cite{vdB96} a new approach to the problem: 
he still starts by partitioning the space in cells, but then instead of using the discrete version of BK inequality he adapts directly its proof.
More precisely, he uses the splitting method, a well known method to prove classical BK inequality in which one replaces one after each other the variable associated to each edge or each cell by an independent copy.
Van den Berg has to deal with an error term due to the discretization (more precisely to the existence of at least two points in a cell) but this term goes to $0$ with the size of the cells.

\subsection{A combinatorial lemma}
\label{s:combinatoire}

Let $n \ge 1$ and  $m \ge 2$.
Let
\[
\cQ = \cQ(n,m) = \big\{(Q_1,\dots,Q_m) \text{ where } Q_1, \dots Q_m \text{ are pairwise disjoint subsets of } \{1,\dots,n\}\big\}.
\]
For any subset $\cR \subset \cQ$ we define
\[
\cA_n(\cR) = \bigcup_{(R_1,\dots,R_m) \in \cR} \{(c_1,\dots,c_n) \in \{1,\dots,m\}^n : 
\text{ for all } a \in \{1,\dots,m\} \text{ and all } i \in R_a, c_i=a\}.
\]
One can think about the family $(c_1,\dots,c_n)$ as a coloring of the sites $\{1,\dots,n\}$ with colors in $\{1,\dots,m\}$
and about the set $\cR$ as a set of rules.
A coloring is $(R_1,\dots,R_m)$-admissible if for all color $a \in \{1,\dots,m\}$, all sites of $R_a$ are colored with color $a$.
The set $\cA_n(\cR)$ is then the set of coloring which are $R$-admissible for some $R \in \cR$.
We also define
\[
\cA_0(\cR) = \bigcup_{(R_1,\dots,R_m) \in \cR} \{(c_1,\dots,c_n) \in \{1,\dots,m\}^n : 
\text{ for all } a \in \{1,\dots,m\} \text{ and all } i \in R_a, c_i=1\}.
\]
The only difference is that we require all sites of all $R_a$ to be colored with color $1$.

\begin{lemma} \label{l:combinatoire} Let $n \ge 1$ and  $m \ge 2$.
Let $\cR \subset \cQ(n,m)$. Then
\[
\card(\cA_0(\cR)) \le \card(\cA_n(\cR)).
\]
\end{lemma}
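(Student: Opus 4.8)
The plan is to prove the inequality by induction on $n$, peeling off the last site. Allowing $n=0$ as the (trivial) base case — there $\{1,\dots,m\}^0$ is a singleton and both $\cA_0(\cR)$ and $\cA_n(\cR)$ have cardinality $1$ or $0$ according to whether $\cR$ is nonempty — I assume the statement for $n-1$ and take $\cR\subseteq\cQ(n,m)$. Since the sets $R_1,\dots,R_m$ of a rule are pairwise disjoint, site $n$ lies in at most one of them, so $\cR$ is partitioned into $\cR_a=\{R\in\cR:n\in R_a\}$ for $a\in\{1,\dots,m\}$ and $\cR_0=\{R\in\cR:n\notin R_1\cup\dots\cup R_m\}$. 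To each $R\in\cR_a$ I associate $R'\in\cQ(n-1,m)$ obtained by deleting $n$ from $R_a$, and I set $\cR_a'=\{R':R\in\cR_a\}$; note $\cR_0,\cR_a'\subseteq\cQ(n-1,m)$.

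Next I slice $\cA_n(\cR)$ and $\cA_0(\cR)$ according to the colour assigned to site $n$. Writing for brevity $P_a:=\cA_{n-1}(\cR_a')$, $Q:=\cA_{n-1}(\cR_0)$ and $P_a^0,Q^0$ for the corresponding $\cA_0$-objects on $\{1,\dots,n-1\}$, a direct inspection of the definitions gives
\[
\card(\cA_n(\cR))=\sum_{a=1}^m\card(P_a\cup Q),\qquad
\card(\cA_0(\cR))=\card\Big(Q^0\cup\bigcup_{a=1}^m P_a^0\Big)+(m-1)\,\card(Q^0).
\]
For the first identity, a colouring $(c',a)$ with last colour $a$ lies in $\cA_n(\cR)$ iff $c'$ is admissible for some rule of $\cR_0$, or for some rule of $\cR_a'$ (a rule of $\cR_b$ with $b\neq a$ forces colour $b$ at site $n$ and cannot be used), so the $a$-th slice is $P_a\cup Q$. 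For the second, $(c',a)\in\cA_0(\cR)$ forces $a=1$ as soon as one uses a rule of some $\cR_b$, and then $c'$ must have all support sites of $R'$ coloured $1$; a rule of $\cR_0$ imposes a condition only on $c'$. Hence the slice at colour $1$ is $Q^0\cup\bigcup_b P_b^0$ and every other slice is $Q^0$.

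The heart of the argument is then the chain
\[
\card\Big(Q^0\cup\bigcup_{a=1}^m P_a^0\Big)+(m-1)\card(Q^0)\ \le\ \sum_{a=1}^m\card(Q^0\cup P_a^0)\ \le\ \sum_{a=1}^m\card(Q\cup P_a).
\]
The first inequality is elementary: $Q^0\cup\bigcup_a P_a^0$ is the disjoint union of $Q^0$ and $\bigcup_a(P_a^0\setminus Q^0)$, so its cardinality is at most $\card(Q^0)+\sum_a\card(P_a^0\setminus Q^0)=\sum_a\card(Q^0\cup P_a^0)-(m-1)\card(Q^0)$. The second inequality is exactly where the induction hypothesis is used: since $\cA_0$ and $\cA_{n-1}$ distribute over unions of rule-families, $Q^0\cup P_a^0=\cA_0(\cR_0\cup\cR_a')$ and $Q\cup P_a=\cA_{n-1}(\cR_0\cup\cR_a')$ with $\cR_0\cup\cR_a'\subseteq\cQ(n-1,m)$, so the hypothesis at level $n-1$ yields $\card(\cA_0(\cR_0\cup\cR_a'))\le\card(\cA_{n-1}(\cR_0\cup\cR_a'))$. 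Combining this chain with the two displayed identities gives $\card(\cA_0(\cR))\le\card(\cA_n(\cR))$ and closes the induction.

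The one delicate point — and the thing that has to be got right — is the choice of induction hypothesis. Applying it separately to $\cR_0$ and to each $\cR_a'$ (which would only give $\card(Q^0)\le\card(Q)$ and $\card(P_a^0)\le\card(P_a)$) is too weak, because on the $\cA_n$ side the cardinality is a sum of cardinalities of \emph{unions} $P_a\cup Q$, and the naive subadditive estimate $\card(P_a\cup Q)\le\card(P_a)+\card(Q)$ points in the wrong direction. The fix is to first regroup the $\cA_0$-count subadditively, as in the first inequality above, and then apply the induction hypothesis to the \emph{merged} families $\cR_0\cup\cR_a'$; once that is seen, the remainder is just careful bookkeeping of the colour-$n$ slices.
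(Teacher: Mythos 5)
Your proof is correct, and it takes a genuinely different route from the paper's. The paper keeps $n$ fixed and interpolates between $\cA_0(\cR)$ and $\cA_n(\cR)$ through hybrid sets $\cA_k(\cR)$ (a coloring is $(k,R)$-admissible when the sites of $\bigcup_a R_a$ that are $\le k$ carry their own color $a$ and those $>k$ carry color $1$), and proves $\card(\cA_{k-1}(\cR))\le\card(\cA_k(\cR))$ by freezing the colors off site $k$ and comparing, for each such frozen $\hat c$, the sets $\cB_{k-1}(\hat c,\cR)$ and $\cB_k(\hat c,\cR)$ of allowed colors at site $k$ (which turn out to be $\emptyset$, all of $\{1,\dots,m\}$, or $\{1\}$ versus a set containing some color $a$, using the disjointness of $R_1,\dots,R_m$). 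You instead induct on the number of sites, slice both sets by the color of site $n$, and — this is your key move, correctly identified in your last paragraph — apply the induction hypothesis not to $\cR_0$ and the $\cR_a'$ separately but to the merged families $\cR_0\cup\cR_a'$, after a subadditive regrouping of the $\cA_0$-count; the identities $\cA_0(\cR_0\cup\cR_a')=Q^0\cup P_a^0$ and $\cA_{n-1}(\cR_0\cup\cR_a')=Q\cup P_a$ make this legitimate, and the bookkeeping checks out (the disjointness of the $R_a$ enters where you note that site $n$ lies in at most one $R_a$, playing the same role as in the paper's singleton-versus-$\{a\}$ case). What each approach buys: the paper's interpolation needs no induction and yields the stronger monotone chain $\card(\cA_0)\le\card(\cA_1)\le\dots\le\card(\cA_n)$ with a purely local one-site comparison, while your induction is self-contained and somewhat shorter, at the price of having to find the right (merged-family) strengthening of the hypothesis, without which — as you observe — the naive estimate points the wrong way.
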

\begin{proof} For all $k \in \{0,\dots,n\}$ let
\[
\cA_k(\cR) = \bigcup_{R \in \cR} \{c \in \{1,\dots,m\}^n : c \text{ is } (k,R) \text{ admissible}\}
\]
where "$c \text{ is } (k,R) \text{ admissible}$" means the following:
\[
\text{for all }a \in \{1,\dots,m\}\text{ and for all }i \in R_a,\text{ if } i \le k,\text{ then } c_i=a \text{ else } c_i = 1.
\]
This definition is compatible with the previous definitions for $k=0$ and $k=n$.
The lemma will follow immediately from the fact that, for all $k \in \{1,\dots,n\}$,
\begin{equation}\label{e:combinfort}
\card(\cA_{k-1}(\cR)) \le \card(\cA_k(\cR)).
\end{equation}
Let $k \in \{1,\dots,n\}$.
For all $\hat c=(\hat c_1,\dots,\hat c_{k-1},\hat c_{k+1},\dots,\hat c_n) \in \{1,\dots,m\}^{n-1}$, define
\[
\cB_{k-1}(\hat c, \cR) = \{c_k \in \{1,\dots,m\} : (\hat c_1,\dots,\hat c_{k-1},c_k, \hat c_{k+1},\dots,\hat c_n) \in \cA_{k-1}(\cR)\}
\]
and
\[
\cB_k(\hat c, \cR) = \{c_k \in \{1,\dots,m\} : (\hat c_1,\dots,\hat c_{k-1},c_k, \hat c_{k+1},\dots,\hat c_n) \in \cA_k(\cR)\}.
\]
As $\card(\cA_{k-1}(\cR)) = \sum_{\hat c} \card (\cB_{k-1}(\hat c, \cR) )$ and $\card(\cA_k(\cR)) = \sum_{\hat c} \card (\cB_k(\hat c, \cR) )$,
\eqref{e:combinfort} follows from the stronger result:
\begin{equation}\label{e:combinfortfort}
\forall \hat c \in\{1,\dots,m\}^{n-1}, \card(\cB_{k-1}(\hat c,\cR)) \le \card(\cB_k(\hat c,\cR)).
\end{equation}
Let $\hat c \in\{1,\dots,m\}^{n-1}$. There are two cases:
\begin{enumerate}
\item Case $\cB_{k-1}(\hat c,\cR)=\emptyset$. In this case, \eqref{e:combinfortfort} is straightforward. 
This is not needed for the proof, but in this case we actually also have $\cB_k(\hat c,\cR)=\emptyset$.
\item Case $\cB_{k-1}(\hat c,\cR) \neq \emptyset$.
In other words, there exists $c_k \in \{1,\dots,m\}$ and $R \in \cR$ 
such that $(\hat c_1,\dots,\hat c_{k-1},c_k,\hat c_{k+1},\dots,\hat c_n)$ is $(k-1,R)$ admissible.
We now subdivide in two subcases.
\begin{enumerate}
\item There exists $c_k \in \{1,\dots,m\}$ and $R \in \cR$ 
such that $(\hat c_1,\dots,\hat c_{k-1},c_k,\hat c_{k+1},\dots,\hat c_n)$ is $(k-1,R)$ admissible
and such that $k \not\in \bigcup_a R_a$.
Fix such a $R$.
As a first consequence, being $(k-1,R)$ admissible does not depend on the $k$th coordinate.
Therefore $\cB_{k-1}(\hat c,\cR)=\{1,\dots,m\}$.
As a second consequence, being $(k-1,R)$ admissible is equivalent to being $(k,R)$ admissible.
Therefore $\cB_k(\hat c,\cR)=\{1,\dots,m\}$.
In particular, \eqref{e:combinfortfort} holds.
\item For all $c_k \in \{1,\dots,m\}$ and $R \in \cR$ 
such that $(\hat c_1,\dots,\hat c_{k-1},c_k,\hat c_{k+1},\dots,\hat c_n)$ is $(k-1,R)$ admissible, $k$ belongs to $\bigcup_a R_a$.
Therefore $\cB_{k-1}(\hat c,\cR)=\{1\}$.
Let $R \in \cR$ be such that $(\hat c_1,\dots,\hat c_{k-1},1,\hat c_{k+1},\dots,\hat c_n)$ is $(k-1,R)$ admissible.
Let $a$ be such that $k \in R_a$.
Then $(\hat c_1,\dots,\hat c_{k-1},a,\hat c_{k+1},\dots,\hat c_n)$ is $(k,R)$ admissible 
--this uses the fact that $R_1, \dots, R_m$ are pairwise disjoint -- and therefore $\{a\} \subset \cB_k(\hat c,\cR)$.
As a consequence, \eqref{e:combinfortfort} holds.
Note that this is the only case where $\cB_{k-1}(\hat c,\cR)$ and $\cB_k(\hat c,\cR)$ can be different.
\end{enumerate}
\end{enumerate}
We proved \eqref{e:combinfortfort}. This yields \eqref{e:combinfort} and then the lemma.
\end{proof}

\subsection{Proof of Lemma \ref{l:bk}}

Write $\lambda = \varpi(\R^p)$.
If $\lambda=0$, then $\chi_1, \dots, \chi_m$ are empty and the result is trivial.
Henceforth, we assume $\lambda>0$ and write $\hat\varpi=\lambda^{-1}\varpi$.
Let $N$ be a Poisson random variable with parameter $m\lambda$.
Let $X=(X_i)_i$ be a family of i.i.d.r.v.\ on $\R^p$ with distribution $\hat\varpi$.
Let $(U_i)_i$ be a family of i.i.d.r.v.\ with uniform distribution on $\{1,\dots,m\}$.
The sets
\[
\chi_a = \{X_i : i \in \{1,\dots,N\} : U_i=a\}, a \in \{1,\dots, m\}
\]
are independent Poisson point processes with intensity measure $\varpi$.
We can thus work with these realizations of the point processes.

Recall the notation of Section \ref{s:combinatoire} and set
\[
\cR(N,X) = \{ (R_1,\dots,R_m) \in \cQ(N,m) : \exists t \in T \textrm{ s.t. } \{X_i, i \in R_1\} \in F^t_1, \dots, \{X_i, i \in R_m\} \in F^t_m\}.
\]
With this notation, we can write (using the fact that the $X_i$ are almost surely distinct)
\begin{align}
\label{e:ajoutfinal1}
 & \P\left[ \chi_1 \in \bigcup_{t \in T} F_t^1 \circ \cdots \circ F^t_m\right] \nonumber \\
 & = \P\left[  \exists (R_1,\dots,R_m) \in \cR(N,X) 
 : \forall a \in \{1,\dots,m\} \; \forall i \in R_a, \; U_i=1\right] \nonumber \\
 & = \E\Big[ \P\big(   \exists (R_1,\dots,R_m) \in \cR(N,X) 
 : \forall a \in \{1,\dots,m\} \; \forall i \in R_a, \; U_i=1\Big|  N,X\big)\Big]
\end{align}
and
\begin{align}
\label{e:ajoutfinal2}
& \P\left[ \bigcup_{t \in T} \{\chi_1 \in F_1^t, \dots, \chi_m \in F_m^t\}\right] \nonumber \\
& = \P\left[  \exists (R_1,\dots,R_m) \in \cR(N,X) 
 : \forall a \in \{1,\dots,m\} \; \forall i \in R_a, \; U_i=a\right] \nonumber \\
 & = \E\Big[ \P\big(   \exists (R_1,\dots,R_m) \in \cR(N,X) 
 : \forall a \in \{1,\dots,m\} \; \forall i \in R_a, \; U_i=a\Big|  N,X\big)\Big].
 \end{align}
The variables $U_i$ have uniform distribution on $\{ 1,\dots , m\}$, thus the conditional probabilities appearing in \eqref{e:ajoutfinal1} and \eqref{e:ajoutfinal2} are proportional to the cardinal of the corresponding sets of admissible colorings. The result then follows from Lemma \ref{l:combinatoire}. \qed


\def\cprime{$'$} \def\cprime{$'$}

\end{document}